\documentclass[11pt]{amsart}
\usepackage{amsmath}
\usepackage{amsthm}
\usepackage{tikz}
\usepackage{amssymb}
\usepackage{graphicx}
\usepackage{amsfonts}
\usepackage{cases}
\usepackage[margin=0.75in]{geometry}
\usepackage{bm}
\usepackage{calc}
\usepackage{hyperref}
\usepackage{cleveref}
\usepackage{lipsum}
\usepackage{color} 
\usepackage{cancel}
\usepackage{pgfplots}
\pgfplotsset{compat=1.15}
\usepackage{mathrsfs}
\usetikzlibrary{arrows}

\hypersetup{
    colorlinks=true, 
    linktoc=all,     
    linkcolor=blue,  
    citecolor = red
}
\newtheorem{Proposition}{Proposition}[subsection]
\newtheorem{Lemma}[Proposition]{Lemma}

\newtheorem{Theorem}[Proposition]{Theorem}

\newtheorem{Corollary}[Proposition]{Corollary}
\newtheorem{Claim}[Proposition]{Claim}
\newtheorem{Remark}[Proposition]{Remark}

\newtheorem{Definition}[Proposition]{Definition}
\numberwithin{Proposition}{subsection}
\newtheorem*{Notation*}{Notation}

\newtheorem{MainTheorem}{Theorem}

\newcommand{\Z}{\ensuremath{\mathbb{Z}}}
\newcommand{\N}{\ensuremath{\mathbb{N}}}
\newcommand{\R}{\ensuremath{\mathbb{R}}}

\newcommand{\LP}{\ensuremath{\mathbb{P}}} 
\newcommand\norm[2]{\left\Vert#1\right\Vert_{#2}} 

\newcommand{\floor}[1]{\left\lfloor #1 \right\rfloor}

\usepackage{mathtools}
\makeatletter
\DeclareRobustCommand\widecheck[1]{{\mathpalette\@widecheck{#1}}}
\def\@widecheck#1#2{%
    \setbox\z@\hbox{\m@th$#1#2$}%
    \setbox\tw@\hbox{\m@th$#1%
       \widehat{%
          \vrule\@width\z@\@height\ht\z@
          \vrule\@height\z@\@width\wd\z@}$}%
    \dp\tw@-\ht\z@
    \@tempdima\ht\z@ \advance\@tempdima2\ht\tw@ \divide\@tempdima\thr@@
    \setbox\tw@\hbox{%
       \raise\@tempdima\hbox{\scalebox{1}[-1]{\lower\@tempdima\box
\tw@}}}%
    {\ooalign{\box\tw@ \cr \box\z@}}}
\makeatother

\definecolor{qqqqff}{rgb}{0,0,1}
\definecolor{ffqqqq}{rgb}{1,0,0}
\definecolor{qqwuqq}{rgb}{0,0.59215686274509803,0}
\definecolor{ffzztt}{rgb}{1,0.6,0.2}
\definecolor{uuuuuu}{rgb}{0.26666666666666666,0.26666666666666666,0.26666666666666666}
\definecolor{ffdxqq}{rgb}{1,0.84313725490191208,0}
\definecolor{xfqqff}{rgb}{0.4980392156862745,0,1}
\definecolor{qqzzff}{rgb}{0,0.6,1}

\title[Quantitative estimates for (NSE) with incorporated forcing and applications]{Quantitative estimates for the forced Navier-Stokes equations and applications}
\author{Tobias Barker, Henry Popkin}
\thanks{TB is supported by an EPSRC New Investigator Award UKRI096 `Dynamics and regularity criteria for nonlinear incompressible partial differential equations'. HP is supported by Raoul \& Catherine Hughes (Alumni funds) and the University Research Studentship award EH-MA1333}
\date{\today}

\begin{document}
 \maketitle
\begin{center}
University of Bath,
Department of Mathematical Sciences,
BA2 7AY\\ \ 
tb2130@bath.ac.uk , hp775@bath.ac.uk \\ \
\end{center}
\hrule
\bigskip
\textsc{Abstract:} \textit{In this paper, we prove a localisation of a slightly supercritical (Orlicz) regularity criterion for the 3D incompressible Navier-Stokes equations. This is a refinement to the recent partial positive answer to Tao's conjecture \cite{Tao21} as given in \cite{Mildcriticality}. The proof requires new quantitative estimates for critically bounded solutions of the forced Navier-Stokes equations, where the forcing is induced by the localisation. A by-product of these new estimates is an application to the Boussinesq equations, where we prove a quantitative blow-up rate for the critical $L^3$ norm of the velocity. We prove these quantitative estimates using Carleman inequalities as in \cite{Tao21}, and subsequently in \cite{Spatialconc}, with an additional forcing term. An obstacle to doing this is that, in the Carleman inequalities, the forcing term is amplified on large scales. Additionally, the low regularity of the forcing requires the addition of Caccioppoli-type estimates to deal with the Carleman inequalities appropriately. 
\\
\\
\textbf{Mathematics Subject Classification. 35Q30, 35Q35, 35A21, 35A23} 
\\
\textbf{Keywords.} Nonlinear parabolic equations, 3D Navier-Stokes equations, 3D Boussinesq equations, Blow-up, Weak solutions, Singularity formation, Quantitative bounds.}
\\
\hrule

\pagenumbering{roman}

\tableofcontents

\pagebreak
\pagenumbering{arabic}
\section{Introduction}
In this paper, we consider the three-dimensional incompressible Navier-Stokes equations over $\R^3$: 
\begin{equation}\label{Navier-Stokes equations}
    \partial_t v -\Delta v +(v \cdot \nabla)v + \nabla p =0, \qquad\qquad  \text{div}(v) = 0. \tag{NSE}
\end{equation}
and the Boussinesq equations:
\begin{align}
    \partial_t v - \Delta v + v\cdot\nabla v + \nabla p &= \theta \mathbf{e}_3, \label{Bous} \tag{Bous.}
    & \partial_t \theta - \Delta\theta + v\cdot\nabla \theta &= 0, &
     \operatorname{div}(v) &= 0.
\end{align}
The focus of this paper is two-fold; the first and main focus is to localise the slightly supercritical Orlicz blow-up criteria of \cite{Mildcriticality}, which gives a partial answer of a conjecture made in \cite{Tao21}. Generally, localisations of blow-up criteria are more tractable for numerics, as discussed in \cite{computerblowup}. For the Navier-Stokes equations, localisations naturally introduce an additional forcing term \cite{TruncationMethod}. We will then require new quantitative estimates for the \textit{forced} Navier-Stokes equations, analogous to those of \cite{Tao21} used subsequently in \cite{Mildcriticality}. This is where the second focus of the paper becomes apparent; using these new quantitative estimates, one may also prove (analogously to \cite{Tao21}) critical blow-up rates for equations that are closely related to \eqref{Navier-Stokes equations}, such as the Boussinesq equations.

\par Blow-up criteria for Leray-Hopf weak solutions have been widely investigated in the literature; we non-exhaustively list some subcritical\footnote{Subcritical/supercritical quantities scale to a positive/negative power of $\lambda$ under the Navier-Stokes rescaling (\ref{NSrescale}). Critical quantities are scale invariant.} \cite{Leray,Sohr01} and critical \cite{Prodi, Serrin62, Ladyzhenskaya, ESS03,Phuc15, ChoeWolfYangLorentz, WangZhangBesovBlow, GKPBesov, AlbBesovBlowup, AlbrBarkerBesov} blow-up criteria. Criticality is defined with respect to the following rescaling property of \eqref{Navier-Stokes equations}: if $(v, p)$ is a solution to \eqref{Navier-Stokes equations} with initial data $u_0$ and forcing $F$, then the rescaled functions \begin{align} \label{NSrescale} v_\lambda(x,t) := \lambda v(\lambda x,\lambda^2t) \qquad \text{and} \qquad  p_\lambda(x,t):= \lambda^2p(\lambda x,\lambda^2t)\end{align}
solve \eqref{Navier-Stokes equations} with initial data $v_{0,\lambda}(x) := \lambda v_0(\lambda x) $ and forcing $F_\lambda(x,t):=\lambda^3 F(\lambda x,\lambda^2t).$ 
\par The seminal work of \cite{ESS03} showed that if $v$ is a Leray-Hopf weak solution and $T^*$ is the first time that $v$ loses smoothness\footnote{This will often be referred to as the ``first blow-up time"; the first time $T^*$ such that there exists a point $x_0\in \R^3$ such that $v \not\in L^\infty(B_r(x_0)\times (T^*-r^2,T^*))$ for all $r>0$.}, then 
\begin{align}
    \limsup\limits_{t\uparrow T^{*}}\norm{v(\cdot,t)}{L^3(\R^3)}=\infty.
\end{align}
The blow-up of the full limit was shown in \cite{Ser2012}. Note that this result is qualitative; this is due to the use of compactness arguments in conjunction with qualitative backwards uniqueness and unique continuation results for parabolic differential inequalities. Later, Tao \cite{Tao21} quantified this result showing that there exists an absolute constant $c>0$ such that
\begin{align}
    \limsup\limits_{t\uparrow T^*}\frac{\norm{v(\cdot,t)}{L^3(\R^3)}}{\left(\log\log\log\left(\frac{1}{T^*-t}\right)\right)^c}=\infty. \label{Tao Quantitative blowup rate}
\end{align}
The key to proving this result was to replace the qualitative arguments of \cite{ESS03} with quantitative counterparts - in particular, one can use Carleman inequalities (see Section \ref{Carleman Section}) in place of the backwards uniqueness and unique continuation results for parabolic differential inequalities; this leads to a quantitative regularity estimate of the form:
\begin{align}
     \norm{v}{L^\infty_t L^3_x(\R^3\times (0,T))}&\leq M \implies \norm{v(\cdot,t)}{L^\infty_{t,x}(\R^3)}\leq t^{-\frac{1}{2}}\exp\exp\exp\left( M^c\right).\label{Tao Quant reg estim}
\end{align}
We mention Tao's results were later refined via spatial concentration techniques in \cite{Spatialconc}, with a localised version of \eqref{Tao Quantitative blowup rate} established in \cite{Localizedblowup}. For further developments, see \cite{Palasek2021ImprovedQuantitative, OzanskiPalasek2022AxisymmetricWeakL3, Palasek2022MinimumCriticalBlowup, spatialsurvey, BarkerMiuraTakahashi2024HeatCriticalBlowUp, HuNguyenNguyenZhang2024EndpointBesov, Barker2025QuantitativeClassification}, for example. 

In \cite{Tao21}, Tao conjectured that if a solution first loses smoothness at time $T^*>0$, then the slightly supercritical Orlicz norm $\norm{v(\cdot,t)}{L^3(\log\log\log(L))^{-c}(\R^3))}$ must blow up as $t$ approaches $T^*.$ \cite{Mildcriticality} provided a partial positive answer to this (with an additional log), by  using an argument inspired by \cite{Bulut} for the non-linear Schrödinger equations. In particular, \cite{Mildcriticality} used a quantitative regularity estimate from \cite{Tao21} to show that there exists a universal constant $\theta\in(0,1)$ so that, if a weak Leray-Hopf solution $v$ first blows-up at time $T^*$, then
\begin{align}
    \limsup_{t\uparrow T^*}\int_{\R^3}\dfrac{|v(x,t)|^3}{\left(\log\log\log\left(\left(\log\left(e^{e^{3e^e}}+|v(x,t)|\right)\right)^\frac{1}{3}\right)\right)^\theta}\;dx = \infty. \label{Original Orlicz}
\end{align} 
We also mention other slightly supercritical regularity criteria for the Navier-Stokes equations. For example, the non-endpoint Prodi-Serrin-Ladyshenskaya (PSL) criteria have been logarithmically improved in \cite{MontgomerySmith,ChanVassLog, ZhouLeiLogProdi,  BjorVassLogProdi}. Most of these papers use energy estimates and the Osgood inequality; these methods are not available for the endpoint case (\ref{Original Orlicz}).

The first purpose of this paper is to localise (\ref{Original Orlicz}) to prove:
\begin{MainTheorem}\label{localorliczblowupnoforce}
   There exists a universal constant $\theta\in (0,1)$ such that the following holds. Suppose that $v:\R^3\times (0,\infty)\to\R^3$ is a suitable Leray-Hopf weak solution of the Navier-Stokes equations on $\R^3\times (0,\infty)$ which first blows up at time $T^*>0,$ and let $(x_*,T^*)$ be a singular point\footnote{We call $(x_*,T^*)$ a singular point of $v$ if $v\notin L^\infty(B_r(x_*)\times (T^*-r^2,T^*))$ for all sufficiently small $r>0$.}. Then for all $\delta>0,$
    \begin{align}
        \limsup_{t\uparrow T^*}\int_{B_\delta(x_*)}\dfrac{|v(x,t)|^3}{\left(\log\log\log\left(\left(\log\left(e^{e^{3e^e}}+|v(x,t)|\right)\right)^\frac{1}{3}\right)\right)^\theta}\;dx = \infty. \label{orlicznormblowuploc}
    \end{align}
\end{MainTheorem}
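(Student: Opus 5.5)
We argue by contradiction. Suppose that for some $\delta>0$ the limsup in \eqref{orlicznormblowuploc} is finite, so that
\[
\sup_{t\in(T^*-\delta_0^2,T^*)}\int_{B_\delta(x_*)}\frac{|v(x,t)|^3}{\bigl(\log\log\log\bigl((\log(e^{e^{3e^e}}+|v|))^{1/3}\bigr)\bigr)^\theta}\,dx\le K<\infty
\]
for some $\delta_0>0$. We localise with a smooth cutoff $\phi$ equal to $1$ on $B_{\delta/2}(x_*)$ and supported in $B_\delta(x_*)$, and set $u=\phi v$. The local pressure is split in the usual way: a harmonic part plus a part given by Riesz transforms of $\phi' v\otimes v$. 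Then $u$ solves a forced Navier--Stokes system on $\R^3\times(T^*-\delta_0^2,T^*)$, with forcing $F$ built from $\nabla\phi\, v$, $\Delta\phi\, v$, $|v|^2\nabla\phi$ and the pressure terms multiplied by $\nabla\phi$. To recover a divergence-free field we may also subtract a Bogovskii correction. Since $v$ is suitable Leray--Hopf, $F$ lies in a subcritical space such as $L^{\infty}_tL^{1}_x+L^2_tL^{2}_x$ localized, and it is supported away from the singular point. Its size is controlled by the energy and local pressure bounds, uniformly in time up to $T^*$. The new quantitative estimates for the forced equation (the Carleman-based analogue of \eqref{Tao Quant reg estim} with forcing, developed later in the paper) then apply to $u$.

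Next we follow the Bulut-type argument of \cite{Mildcriticality}. At each time $t$ we split $u=u_{\le N}+u_{>N}$ according to whether $|v(x,t)|\le N$ or $|v(x,t)|> N$, with $N$ to be chosen.
\begin{itemize}
\item The large part has small $L^3$ norm in a quantified way: on $\{|v|>N\}$ the Orlicz weight is at least $(\log\log\log((\log N)^{1/3}))^\theta$, so $\|u_{>N}\|_{L^3}^3\le K(\log\log\log\log N)^{-\theta}$.
\item The small part is bounded pointwise by $N$ on a bounded set, so it is subcritical and harmless on short time intervals of length $\sim N^{-2}$.
\end{itemize}
On a time interval $[t_0,t_0+\tau]$ near $T^*$ we view $u$ as a perturbation of the solution launched from the critical part, treating $u_{\le N}$ and $F$ as subcritical forcing. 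The forced quantitative estimate gives $\|u\|_{L^\infty}\lesssim \tau^{-1/2}\exp\exp\exp(M^c)$, where $M$ is the effective $L^3$ size. Choosing $N$ as a function of $t_0$, we aim for $M^c$ to be much smaller than $\log\log\log\log N$. This is exactly where $\theta$ small enters: it must beat the triple exponential. The result is a bound on $u$ in $L^\infty(B_{\delta/4}(x_*)\times(T^*-\tau,T^*))$, contradicting that $(x_*,T^*)$ is singular. Since the criterion of \cite{ESS03}/\cite{Localizedblowup} is local, it suffices to bound $v$ near $x_*$.

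The main obstacle is the forcing introduced by the localisation. In the Carleman inequalities this forcing is amplified at large scales, and it has low regularity, since it involves $|v|^2\nabla\phi$ and the pressure. We therefore need the quantitative bound for the forced equation to depend on $F$ only through a scale-invariant norm that is finite, and small after rescaling near $T^*$. This is achieved by rescaling so that the relevant window has unit size: with the forcing rescaling $F_\lambda=\lambda^3F(\lambda\cdot,\lambda^2\cdot)$, a subcritical norm of $F$ gains a positive power of $\lambda\to0$. Our first step is to verify that $F$ lies in a space that scales subcritically, using the local energy inequality and Caccioppoli estimates. After that, the bookkeeping linking $N$, $\tau$ and $\theta$ follows \cite{Mildcriticality}.
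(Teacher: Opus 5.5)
Your proposal has genuine gaps at the two places where the paper does its real work.

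\textbf{The forcing.} A cutoff with a fixed transition region $B_\delta\setminus B_{\delta/2}$ may pass through other singular points at time $T^*$. Even if it does not, energy and local pressure bounds only place terms like $|v|^2\nabla\phi$ and $p\nabla\phi$ in spaces such as $L^\infty_tL^1_x$ or $L^{3/2}_{t,x}$. Under $F_\lambda=\lambda^3F(\lambda\cdot,\lambda^2\cdot)$ these are \emph{critical} and supercritical respectively, not subcritical, so rescaling gains nothing.

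More to the point, the forced quantitative estimate (Proposition \ref{Rescaledquantestimtime}) needs $F\in L^2_tH^1_x\cap L^6_{t,x}$. The norm of $\nabla\times F$ in $L^2$ enters every Carleman inequality, and $F\in L^6$ drives the $\varepsilon$-regularity and H\"older bounds for $\omega$. The paper gets this in Lemma \ref{Truncation procedure} in four moves:
\begin{enumerate}
\item it chooses radii $r_0<r_1$ such that $v$ is smooth, with all derivatives bounded, on $B_{r_1}\setminus B_{r_0}$ up to $T^*$ (partial regularity, following \cite{TruncationMethod});
\item it cuts off only on that annulus;
\item it adds a Bogovskii correction $w$ with $\operatorname{div} w=-\nabla\Phi\cdot v$, which is not optional since the estimates are for divergence-free Leray--Hopf solutions;
\item it uses $p\in L^6_tL^{9/8}_x$ to conclude $F\in L^6_tW^{k,\infty}_x$.
\end{enumerate}

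\textbf{The Bulut step.} Your first bullet is reversed. The weight $W(s)$ (the $\log\log\log$ factor raised to the power $\theta$) is increasing and sits in the denominator, so the Orlicz bound gives no $L^3$ control at all on $\{|v|>N\}$; the quantity is weaker than $L^3$. What it does give is $\int_{\{|v|>\lambda\}}|v|^{3-\mu}\le L\log\lambda/\lambda^{\mu}$ for $\lambda\ge e^{1/\mu}$, and $\int_{\{|v|\le\lambda\}}|v|^3\le L\,W(\lambda)$. So you never obtain the $L^\infty_tL^3_x$ bound the quantitative estimate requires. Also, treating $u_{\le N}$ as a forcing is meaningless, since height truncations satisfy no equation.

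The paper closes the loop with a subcritical bootstrap quantity:
\begin{enumerate}
\item With $\mu=(\log\lambda)^{-1/2}$ and $\theta=1/618$, it gets $\|V\|_{L^\infty_tL^{3-\mu}_x}\le N$ with $\mu\le 1/\exp\exp\exp(N^{618})$.
\item It interpolates $\|V\|_{L^3}\le N^{\frac{3-\mu}{3+3\mu}}(4K)^{\frac{4\mu}{3+3\mu}}$ against the global $L^4$ norm, and uses the forced estimate only to bound $\|V\|_{L^5_{t,x}}$.
\item It slices time into pieces of small $L^5$ norm and iterates the global $L^4$ energy inequality; this is why the problem must be globalised.
\item It chooses $K(A,N)$ and $\mu_0$ so that the small power of $K$ is harmless (Proposition \ref{General forced subcritical information transfer}).
\end{enumerate}
The resulting $L^\infty_tL^4_x$ bound then gives a local Serrin contradiction.

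If you want to keep an $L^\infty$-type scheme instead, a consistent version is to take $N$ to be the running supremum of $\|u\|_{L^\infty}$. Then the high part is empty and $\|u\|_{L^3}^3\lesssim K\,W(N)+C$, and you run a continuity argument in that supremum, with small $\theta$ beating the triple exponential. As written, your proposal contains neither mechanism.
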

To prove the global version of this result, the original paper \cite{Mildcriticality} uses a continuity-type argument based on an idea of Bulut \cite{Bulut}; here, the idea is to (iteratively) apply a \textit{global} $L^4(\R^3)$ (subcritical) energy estimate which looks similar\footnote{Technically, the inequality involves the $L^4$ energy defined in Section \ref{mildcritbreakingsection} rather than just the $L^4$ norms.} to
\begin{equation}
        \norm{v(\cdot,t)}{L^4(\R^3)}^4 \lesssim \norm{v(\cdot,t_0)}{L^4(\R^3)}^4+ \norm{v}{L^5(\R^3\times(t_0,t_0+S))}\norm{v}{L^\infty_tL^4_x(\R^3\times (t_0,t_0+S))}^4 \quad \text{for all}\; t\in (t_0,T_0+S).\label{L4energyestim}
    \end{equation}
    Meanwhile, Tao's quantitative regularity estimate (\ref{Tao Quant reg estim}) (adapted to estimate the $L^5$ norm) and Lebesgue interpolation gives:
    \begin{align}
    \norm{v}{L^5_{t,x}}\lesssim \exp\exp\exp(\norm{v}{L^\infty_tL^3_x}^c)\lesssim \exp\exp\exp(\norm{v}{L^\infty_tL^{3-\mu}_x}^{c\theta_\mu}\norm{v}{L^\infty_tL^{4}_x}^{c(1-\theta_\mu)}). \label{TaoL5}
\end{align}
Under the converse of (\ref{Original Orlicz}), one may carefully tune $\mu\in(0,\frac{1}{2}],$ and use (\ref{TaoL5}) to close the iterative energy estimate (\ref{L4energyestim}) to show that regularity holds up to time $T^*$. See \cite{spatialsurvey} for further discussion.
\par This paper uses the same method inspired by \cite{Bulut}, subsequently used by \cite{Mildcriticality}; however, the energy inequality (\ref{L4energyestim}) is a global quantity. Typically, to localise a blow-up criterion, one utilises a Bogovskii truncation procedure (originally seen in \cite{TruncationMethod}). This transforms the \textit{local} problem for the Navier-Stokes equations into a \textit{global} problem for the Navier-Stokes equations with forcing $F\in L^2_tH^1_x\cap L^6_{t,x}$ in our setting. This difficulty faced with the incorporation of the forcing term is that we no longer have access to the quantitative regularity estimate (4) from \cite{Tao21}. Thus, we are required to prove a quantitative regularity estimate for critically bounded solutions of the \textit{forced} Navier-Stokes equations. The associated difficulties and strategy used are discussed in detail in the next section.
\par The second goal of this paper is to prove a slightly supercritical regularity criterion for physical systems that are coupled to the Navier-Stokes equations through a forcing term. For example, we consider the Boussinesq equations:
\begin{align}
    \partial_t v - \Delta v + v\cdot\nabla v + \nabla p &= \theta \mathbf{e}_3, \label{Bous1 - forced NSE}
    & \partial_t \theta - \Delta\theta + v\cdot\nabla \theta &= 0,
    \\ \operatorname{div}(v) &= 0, & (v(\cdot,0),\theta(\cdot,0))&= (v_0,\theta_0).
\end{align}
Here, $v$ is a vector field which represents the velocity field of the fluid, $p$ and $\theta$ are scalar fields representing pressure and temperature (or density in some applications) of the fluid respectively, and $\mathbf{e}_3$ is the standard basis vector $(0,0,1)^T.$ Considering (\ref{Bous1 - forced NSE}) as a forced Navier-Stokes equation with forcing $F=\theta \mathbf{e}_3$, one may see (by a simple $L^p$-energy estimate on $\theta$) that we may bound $F$ in $L^2_tH^1_x\cap L^6_{t,x}$ \textit{independently} of $v$. We may then instantly recover the quantitative estimate (used for Theorem \ref{localorliczblowupnoforce}) for the Boussinesq equations. This then allows one to show a quantitive blow-up rate for the critical norm of $v$ near a singularity, in exactly the same manner as \cite{Tao21} for the Navier-Stokes equations:
\begin{MainTheorem}\label{Qunatitiative critical blow-up for Bous}
    Suppose that $(v,\theta, p)$ is a smooth solution to the Boussinesq equations on $\R^3\times (0,T^*)$ in the energy class, corresponding to initial data $v_0\in L^2\cap L^\infty$ and $\theta_0\in L^2\cap L^\infty$. Moreover, suppose that $v$ blows-up at $T^*.$
    Then we have the following quantitative blow-up rate for the $L^3$ norm of $v$:
    \begin{align}
        \limsup\limits_{t\uparrow T^*}\dfrac{\norm{v(\cdot,t)}{L^3(\R^3)}}{\left(\log\log\log\left(\dfrac{1}{(T^*-t)^\frac{1}{4}}\right)\right)^\frac{1}{614}} = \infty.
    \end{align}
\end{MainTheorem}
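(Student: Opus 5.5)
We argue by contradiction, following Tao's route to \eqref{Tao Quantitative blowup rate}, with the quantitative regularity estimate for the \emph{forced} Navier--Stokes equations (which the paper develops later) in place of \eqref{Tao Quant reg estim}. Rewrite \eqref{Bous1 - forced NSE} as \eqref{Navier-Stokes equations} with forcing $F=\theta\mathbf{e}_3$. The first step is to bound $F$ in $L^2_tH^1_x\cap L^6_{t,x}$ on $\R^3\times(0,T^*)$ in terms of the data only. Since $v$ is smooth and divergence free, the maximum principle gives $\|\theta(t)\|_{L^\infty}\le\|\theta_0\|_{L^\infty}$. The $L^2$ energy identity gives
\[
\|\theta(t)\|_{L^2}^2+2\int_0^t\|\nabla\theta\|_{L^2}^2\,ds=\|\theta_0\|_{L^2}^2 .
\]
Interpolation then yields $\theta\in L^6_{t,x}$ on bounded time intervals, with $\|\theta\|_{L^6_{t,x}}^6\le T^*\|\theta_0\|_{L^\infty}^4\|\theta_0\|_{L^2}^2$. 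All of these bounds are independent of $v$.

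Next, suppose the conclusion fails. Then there are $M_0$ and $t_0<T^*$ such that
\[
\|v(\cdot,t)\|_{L^3}\le M_0\Big(\log\log\log\big((T^*-t)^{-1/4}\big)\Big)^{1/614}\quad\text{for } t\in(t_0,T^*).
\]
Fix $t_1$ close to $T^*$ and set $M:=M_0(\log\log\log((T^*-t_1)^{-1/4}))^{1/614}$. Then $\|v\|_{L^\infty_tL^3_x(\R^3\times(t_1,T^*))}\le M$. Apply the forced quantitative estimate on $(t_1,T^*)$, shifting time so that $t_1$ becomes $0$ and using the previous step to control $F$. This gives a pointwise bound of the form
\[
|v(x,t)|\le (t-t_1)^{-1/2}\exp\exp\exp(M^{c})\,C(F)
\]
for $t\in(t_1,T^*)$. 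The forced estimate is not scale invariant. It presumably carries an explicit dependence on the size of the interval, $S=T^*-t_1$, through the large-scale amplification of the forcing noted in the abstract. I expect this to appear as a power of $S$, and that is what produces the $(T^*-t)^{-1/4}$ inside the logarithms in place of $(T^*-t)^{-1}$. The exponent $1/614$ should be exactly the reciprocal of the constant $c$ in the forced estimate, with $c=614$ up to the exponent bookkeeping.

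Now take $t=T^*-\epsilon$ and let $t_1\uparrow T^*$ suitably, for instance $t_1=T^*-2\epsilon$. The choice of exponent $1/614$ makes $\exp\exp\exp(M^{c})$ at most a small power of $(T^*-t_1)^{-1}$, say $(T^*-t_1)^{-1/4}$ raised to a power $M_0^{c}$ that can be absorbed. The result is a Type I type bound $|v(x,t)|\lesssim (T^*-t)^{-1/2}\cdot(\text{small})$, or more precisely a bound on $\sqrt{T^*-t}\,\|v(t)\|_{L^\infty}$ that tends to zero or stays below the $\epsilon$-regularity threshold. This contradicts blow-up at $T^*$: either one uses the standard lower bound $\|v(t)\|_{L^\infty}\ge c(T^*-t)^{-1/2}$ (valid with forcing, since $F$ is bounded), or one runs Tao's argument with a time-dependent choice of $t_1$ and a large constant, so that the $\limsup$ must be infinite. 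As in Tao, the argument may have to be set up for a sequence of times rather than a single $t_1$.

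The main obstacle is the precise form of the forced quantitative estimate: how the constant depends on $\|F\|_{L^2_tH^1_x\cap L^6_{t,x}}$ and on the length of the time interval, since that dependence breaks scaling. I would first check that rescaling to unit interval length multiplies the forcing norms by a power of $S$ that is small for small $S$. Then the estimate is applied in the regime where the forcing is perturbatively small and only affects the constants, and the $1/4$ and $1/614$ are read off from that bookkeeping.
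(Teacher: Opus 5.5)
Your bound on $F=\theta\mathbf{e}_3$ is fine. The paper uses the $L^6$ energy estimate for $\theta$ rather than the maximum principle plus interpolation, but either works. The argument breaks at the next step.

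\textbf{First problem.} You claim $\|v\|_{L^\infty_tL^3_x(\R^3\times(t_1,T^*))}\le M$ with $M=M_0(\log\log\log((T^*-t_1)^{-1/4}))^{1/614}$. This does not follow from the contradiction hypothesis. The bound $M_0(\log\log\log((T^*-t)^{-1/4}))^{1/614}$ increases to $+\infty$ as $t\uparrow T^*$, so its value at the left endpoint $t_1$ says nothing about the interval $(t_1,T^*)$. Indeed, a uniform $L^3$ bound up to $T^*$ would already give regularity with no rate needed. Monotonicity only gives $\sup_{s<t}\|v(s)\|_{L^3}\le M_0(\log\log\log((T^*-t)^{-1/4}))^{1/614}$. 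So $M$ must be attached to the \emph{right} endpoint $t$ of the interval on which you apply the estimate.

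\textbf{Second problem.} Even granting that step, your choice $t_1=T^*-2\epsilon$ makes the prefactor $(t-t_1)^{-1/2}=\epsilon^{-1/2}$ exactly of Type I size. What you actually get is $\sqrt{T^*-t}\,\|v(t)\|_{L^\infty}\lesssim\exp\exp\exp(M^{c})$ with $M\to\infty$. Nothing here is small, and it is fully consistent with the lower bound $\|v(t)\|_{L^\infty}\gtrsim (T^*-t)^{-1/2}$, so no contradiction arises.

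\textbf{The missing idea.} Keep the left endpoint of the interval fixed and far from $T^*$. Assuming without loss of generality $T^*\le 1$, apply Proposition \ref{Rescaledquantestimtime} on $(0,t)$ for each $t$ near $T^*$. Then the prefactor $t^{-1/2}$ is harmless and all the growth sits in the triple exponential. Set $M(t):=(\log\log\log((T^*-t)^{-1/4}))^{1/613}$. Since $M(t)\to\infty$, and the exponent gap $1/614<1/613$ absorbs your constant $M_0$, for $t$ close to $T^*$ this $M(t)$ bounds $\sup_{s<t}\|v(s)\|_{L^3}$ plus the data-dependent forcing norm. Moreover $\exp\exp\exp(M(t)^{613})=(T^*-t)^{-1/4}$ exactly, so
\[
\|v(t)\|_{L^\infty}\lesssim t^{-1/2}(T^*-t)^{-1/4}.
\]
This is square-integrable in time near $T^*$. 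The Prodi--Serrin criterion $v\in L^2_tL^\infty_x$ for Boussinesq then gives the contradiction; alternatively, your lower bound $c(T^*-t)^{-1/2}$ does too.

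\textbf{Your reading of the constants.} This is also off. The forced estimate carries no extra power of the interval length: on intervals of length less than $1$ the forcing norm is simply included in $M$, and the subcritical forcing norms only shrink under rescaling to unit length. The $1/4$ is just some exponent below $1/2$, chosen for square-integrability. The constant $M_0$ is absorbed through the exponent gap $1/614<1/613$, not by raising $(T^*-t)^{-1/4}$ to a power.
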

\begin{Remark}
    We mention that modifications may be made to the Bogovskii truncation procedure (and minor modifications thereafter) to prove the blow-up of the local Orlicz quantity (\ref{orlicznormblowuploc}) at the first singular time for a Leray-Hopf weak solution to the Navier-Stokes equations with forcing $f\in L^2_tH^1_x\cap L^q_{t,x}$ (where $q>5$). In particular, it instantly follows from these modifications that we have the blow-up of the local Orlicz quantity (\ref{orlicznormblowuploc}) for the Boussinesq equations. 
\end{Remark} 
\subsection{Comparison to previous literature and strategy} We utilise the spatial concentration method of \cite{Spatialconc}, which is a physical-space analogue of \cite{Tao21}. See the recent survey paper \cite{spatialsurvey} for an exposition of these methods. The main difficulty faced in this paper lies in incorporating forcing into the critical quantitative regularity estimate (see Section \ref{MainProof}). In \cite{Localizedblowup}, forcing is also incorporated into the spatial concentration method, but in that setting, the quantitative estimates are carried out far away from the support of the forcing. This is not possible in our setting. By rescaling, the domain of our setting is $\R^3\times (-1,0)$, and we assume that $\norm{v}{L^\infty_t L^3_x(\R^3\times (-1,0))} \leq M$ and $\norm{F}{L^2_tH^1_x\cap L^6_{t,x}(\R^3\times (-1,0))}$ is sufficiently small (as described later). Assuming that the vorticity concentrates at a time $t_0\in(-1,0)$: \begin{align}
    \int_{B(0,{M^{c}(-t_0)^\frac{1}{2})}} |\omega(x,t_0)|^2\;dx\geq \frac{M^{-c}}{\sqrt{-t_0}}, \label{lowerbndomega}
\end{align}
the aim of the method is to use inequality arguments from \cite{Tao21} (with incorporated forcing) to deduce a bound at time $t=0$:
\begin{align}
        \int\limits_{B\left(\exp\left(M^{c'}T^\frac{1}{2}\right)\right)\setminus B(T^\frac{1}{2})}|v(x,0)|^3\;dx \geq \exp\left(-\exp\left(M^{c'}\right)\right) . \label{introL3annuli}
\end{align}
Here, $T$ depends on $t_0$ as described in Proposition \ref{Backwards prop of vort conc}. As in \cite{Tao21} and subsequently \cite{Spatialconc, Localizedblowup}, summing (\ref{introL3annuli}) over suitable scales so that the annuli $B\left(\exp\left(M^{c'}T^\frac{1}{2}\right)\right)\setminus B(T^\frac{1}{2})$ are disjoint, one obtains a quantitative lower bound on $-t_0$. 
As mentioned previously, these methods make use of two Carleman inequalities; these may only be applied on regions where the vorticity satisfies a differential inequality of the form:
\begin{align}
    |\partial_t\omega-\Delta \omega|(x,t) \leq \frac{|\omega(x,t)|}{C_{Carl} T} +\frac{|\nabla \omega(x,t)|}{(C_{Carl} T)^\frac{1}{2}} + |\nabla\times F(x,t)|. \label{maindiffineq}
\end{align}
These inequalities use the vorticity structure for the Navier-Stokes equations.
To demonstrate the difficulty with incorporating forcing into these methods, let us first discuss ``Step 1", the application of the first Carleman inequality corresponding to unique continuation of parabolic operators (see Proposition \ref{FirstCarlemanInequality} for a precise statement). Informally, the first Carleman inequality is most useful when it is applied on large scales $R\gg1$; in which case, rearranging the inequality (without the presence of force) roughly gives:
\begin{align}
    T_1^{-\frac{1}{2}}e^{-\frac{C_MR^2}{T_1}}\lesssim\int_{-T_1}^{-\frac{T_1}{2}}\int_{B_{R}\setminus B_\frac{R}{2}}|\omega(x,s)|^2\;dxds. \label{Gaussian lower bound}
\end{align}
Here, $C_M$ is a large constant depending on $M$ (and $T_1$ is defined with respect to $t_0$ via Proposition \ref{Backwards prop of vort conc}). The bound (\ref{Gaussian lower bound})  is key to obtain (\ref{introL3annuli}). Indeed, one obtains (\ref{introL3annuli}) by combining Step 1 with the second Carleman inequality\footnote{The second Carleman inequality may be applied on regions where the differential inequality of the same form as (\ref{maindiffineq}) holds; we will discuss this more shortly.} corresponding to backwards uniqueness of parabolic operators (see Proposition \ref{SecondCarlemanInequality}), and then finally using the first Carleman inequality again.
When forcing is incorporated, both Carleman inequalities (and hence line (\ref{Gaussian lower bound})) gain an additional term which looks similar to:
\begin{align}
    Te^\frac{C'_MR^2}{T}\norm{\nabla\times F}{L^2(B_{R}\times(-T,0))}^2.\label{problemterm}
\end{align}
Clearly, this term becomes unbounded on arbitrarily large scales $R\gg1$. Thus, to obtain the Gaussian lower bound (\ref{Gaussian lower bound}), the forcing term may not be neglected if $R$ is left unchecked as in \cite{Tao21, Spatialconc} . 
To overcome this obstacle, we make the following observation; if the forcing is small enough\footnote{Indeed, we may assume this via a rescaling argument since $\norm{F}{L^2_t\dot{H}^1_x}$ is subcritical.} (in $L^6_{t,x}$) then the scales on which the second Carleman inequality may be applied where (\ref{maindiffineq}) holds is \textit{not} dependent on the forcing. Moreover, the lower bound (\ref{lowerbndomega}) is also independent of small forcing in $L^6_{t,x}$. Then the second Carleman inequality may be applied where the differential inequality (\ref{maindiffineq}) holds, over an ``annulus of regularity" (see Proposition \ref{ann of reg M version})
\begin{align} \big\lbrace 4\hat{R} < |x| < \frac{M^{1200}\hat{R}}{16} \big\rbrace \times \left(-\frac{T_2}{M^{200}},0\right) \quad \text{where}\; \hat{R}\in [2MT_2^\frac{1}{2},MT_2^\frac{1}{2}\exp{\left( M^{607}\right)}] \label{introanofreg}\end{align}
Thus, (\ref{introanofreg}) gives an upper bound for the scale $R$ where both Carleman inequalities may be applied, hence we know exactly how much the problematic forcing terms (\ref{problemterm}) are amplified in all the Carleman inequalities. We may then use that $\norm{\nabla\times F}{L^2(\R^3\times(-1,0))}$ is subcritical to make\footnote{Via a rescaling argument.} it sufficiently small to counteract this amplification. This then means that the problematic forcing terms (\ref{problemterm}) are negligible compared to the desired Gaussian lower bound; this allows us to use the Carleman inequality with forcing to obtain the desired Gaussian lower bound (\ref{Gaussian lower bound}). Similarly, we utilise the subcritically of the forcing term to counteract the amplification of the forcing terms in subsequent applications of the Carleman inequalities. 
\par Finally, we mention that in our setting, $F$ is only quantitatively controlled in lower regularity spaces. Thus, we only have quantitative control of lower regularity quantities when estimating terms in the Carleman inequalities. This is in contrast to the works of \cite{Tao21, Spatialconc, Localizedblowup} where second derivatives of the velocity are bounded in the region of regularity. We utilise Caccioppoli-type inequalities to circumvent the use of these higher regularity bounds when estimating certain terms in the Carleman inequalities. 

\subsection{Outline of the paper} Section \ref{Tools} contains the tools required for the proof of the main quantitative regularity estimate in Section \ref{MainProof}. This will be used to prove Theorem \ref{localorliczblowupnoforce} in Section \ref{Orlicz Proof}. Finally, we use the main quantitative estimate again in Section \ref{Bouss Aplic} to prove Theorem \ref{Qunatitiative critical blow-up for Bous}.

\subsection{Notation and definitions} \label{Definitions}
\subsubsection{Notation} 
We adopt the convention where $C$ denotes a positive universal constant which may change from line to line unless made otherwise clear; dependence on parameters $a,b,c...$ is denoted via subscript, $C_{a,b,c...}$ . We often use the shorthand $X \lesssim_{a,b,c...} Y$ if there exists a constant $C>0$ depending on parameters $a,b,c...$ such that $X \leq C_{a,b,c...}Y.$ The notation $X \sim_{a,b,c...} Y$ means that both $X \lesssim_{a,b,c...} Y$ and $X \gtrsim _{a,b,c...} Y.$ 
\par We say that a property holds for all $M$ ``sufficiently large'' if there exists a universal constant $M_0>0$ such that the property holds for all $M\geq M_0.$ Similarly, we say that a property holds for a sufficiently small $\varepsilon$ if there exists a universal constant $\varepsilon_0\in (0,1)$ such that the property holds for all $0<\varepsilon\leq\varepsilon_0.$
\par We also adopt Einstein summation convention. In the standard basis $(\textbf{e}_i)$ for $\R^3$, the tensor product of two vectors $a=a_i\textbf{e}_i,$ $b=b_i\textbf{e}_i$ is the matrix $a\otimes b$ with entries $(a\otimes b)_{ij} := a_ib_j.$
\\For $x_0\in \R^3$, $r>0$ we denote the ball
$B_r(x_0):= \left\lbrace x\in \R^3 : |x-x_0|<r \right\rbrace.$
We often write $B_r := B_r(0)$ or $B(0,r):=B_r(0)$ for typesetting purposes.
\par For $(x_0,t_0) \in \R^3 \times \R$, $r>0$ we denote the parabolic cylinder $Q_r(x_0,t_0):= B_r(x_0) \times (t_0-r^2,t_0).$
We often write $Q_r := Q_r(0,0)$.
\par We define $L^2_\sigma(\R^3)$ as the $L^2(\R^3)$-closure of the set of all divergence-free functions belonging to $C^\infty_0(\R^3)$ .
\par For any Banach space $(X, \norm{\cdot}{X})$, $a<b$, $p\in[1,\infty)$ we denote the mixed space $L^p(a,b;X)$ to be the Banach space of strongly measurable functions $f:(a,b)\to X$ such that
$$\norm{f}{L^p(a,b;X)}:= \left( \int_a^b \norm{f(t)}{X}^p\right)^\frac{1}{p} < \infty$$
with the usual modifications for $p=\infty$ or for locally integrable spaces. 
\\For $\Omega \times(a,b) \subset \R^3\times \R$, and $X(\Omega)$ is a Banach space of functions defined on $\Omega$, we will often denote $$L^p_tX_x(\Omega \times (a,b)) := L^p(a,b;X(\Omega))$$
If  $Y(\Omega)$ is also a Banach space of functions defined on $\Omega$, we also denote intersections of mixed spaces $$L^p_tX_x\cap L^q_tY_x(\Omega \times (a,b)):= L^p(a,b;X(\Omega))\cap  L^q(a,b;Y(\Omega)).$$
Here, we equip $L^p_tX_x\cap L^q_tY_x(\Omega \times (a,b))$ with the canonical intersection norm.
\\Let $C([a, b]; X)$ denote the space of continuous $X$ valued functions on $[a, b]$ with the usual norm. In addition, let $C_w([a, b]; X)$ denote the space of $X$ valued functions, which
are continuous from $[a, b]$ to the weak topology of $X$.
\\ The parabolic H\"older space with exponent $\alpha \in (0,1)$ is defined as the set of continuous functions $f:\Omega\times(a,b)\to \R^3$ for which the parabolic H\"older norm is finite:
\begin{align}
    \norm{f}{C^{0,\alpha}_{par}(\Omega\times(a,b))}:= \norm{f}{L^\infty(\Omega\times(a,b))}+ \sup\limits_{(x,t) \neq (y, s) \in \Omega\times(a,b) } \frac{|f(x,t)-f(y,s)|}{|x-y|^\alpha+|t-s|^\frac{\alpha}{2}}.
\end{align}
Finally, we use the following notion of solution throughout the paper:
\begin{Definition}\label{LerayHopfWeak}
  A function $v$ is a \textbf{Leray-Hopf weak solution} to the Navier-Stokes equations with forcing $f\in L^2(\R^3\times(0,\infty))$ on $\R^3\times(0,\infty)$ corresponding to initial data $v_0 \in L^2_\sigma(\R^3)$ if $v$ satisfies the equations in the sense of distributions and
    $$v \in C_w([0,\infty);L^2_\sigma(\R^3)) \cap L_{\textup{loc}}^2([0,\infty);\dot{H}^1(\R^3)), \qquad \text{and} \qquad
\lim_{t\to0^+}\norm{v(t)-v_0}{L^2} = 0. $$
Moreover, $v$ satisfies the global energy inequality for all $t\in(0,\infty)$:
\begin{equation} \label{energy inequality}
\norm{v(\cdot,t)}{L^2}^2+2\int_0^t\norm{\nabla v(\cdot,\tau)}{L^2}^2 d\tau \leq \norm{v_0}{L^2}^2.
\end{equation}

\end{Definition}
\begin{Definition} A distributional solution $(v,p)$ to the forced Navier-Stokes equations with forcing $f\in L^2((a,b);L^2(\Omega))$ is called a \textbf{suitable} weak solution to the Navier-Stokes equations on the domain $\Omega \times (a,b)\subseteq\R^4$ if $$v \in L^{\infty}((a,b);L^2(\Omega)) \cap L^2((a,b);\dot{H}^1(\Omega)), \qquad p\in L^\frac{3}{2}(\Omega \times (a,b)).$$ Moreover, $(v,p)$ satisfies the local energy inequality:  
\begin{multline} \label{LocalEnergyIneq}
\int_\Omega |v(t)|^2\phi(t)\;dx + 2\int_a^t\int_\Omega|\nabla v|^2\phi \;dxds \\ \leq \int_a^t\int_\Omega |v|^2(\partial_t \phi + \Delta \phi)\;dxds + \int_a^t\int_\Omega (|v|^2+2p)v\cdot\nabla\phi\; dxds +2\int_0^t\int_{\R^3}f\cdot v\phi\;dxds
\end{multline}
for all non-negative $\phi \in C_0^\infty(\Omega \times (a,\infty))$ and almost every $t\in(a,b)$.
\end{Definition}
\begin{Definition}\label{singularDefn}
    We say that a weak solution $v$ has a singular point at $(x_0,t_0)\in \R^3\times(0,\infty)$ if $v\notin L^\infty(Q_r(x_0,t_0))$ for all $r>0$ sufficiently small. We also call $t_0$ a blow-up time. 
\end{Definition}

\section{Ingredients for the proof of the main quantitative estimate}\label{Tools}
\subsection{Serrin-type bootstrapping with forcing}
\begin{Lemma}\label{CSYTLemma}
    Let $1\leq s,q<\infty$ and $G,H\in L^{s}_tL^{q}_x(Q_1)$. Assume that $W\in L^s_tL^1_x(Q_1)$ is a distributional solution to the Stokes system in $Q_1$:
    \begin{align}
        \partial_t W-\Delta W+\nabla P = \operatorname{div}(G) + H,
        \qquad \operatorname{div}(W)=0.
    \end{align}
    Then $W$ satisfies the inequality
    \begin{align}
        \norm{\nabla W}{L^s_tL^q_x(Q_\frac{3}{4})}\lesssim_{s,q} \norm{G}{L^s_tL^q_x(Q_1)} + \norm{H}{L^s_tL^q_x(Q_1)} +\norm{W}{L^s_tL^1_x(Q_1)}.
    \end{align}
\end{Lemma}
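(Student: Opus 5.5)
We plan to follow the standard interior regularity argument for the Stokes system of Seregin and of Chen--Strain--Yau--Tsai, which is where the label of the lemma comes from. The main difficulty is the pressure: no estimate on $P$ is assumed, so local $L^p$ theory cannot be applied to the pair $(W,P)$ directly. We therefore pass to the vorticity, or equivalently work with $\operatorname{curl}$, which removes the pressure. We then recover $\nabla W$ from $\operatorname{curl} W$ and $\operatorname{div} W=0$ by a localised Biot--Savart decomposition.

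The first step is to choose cylinders $Q_{3/4}\subset Q_{\rho_1}\subset Q_{\rho_2}\subset Q_1$ and a cutoff $\eta\in C^\infty_0(Q_{\rho_2})$ with $\eta\equiv1$ on $Q_{\rho_1}$. Set $\omega=\operatorname{curl}W$, which solves in the distributional sense
\begin{equation*}
\partial_t\omega-\Delta\omega=\operatorname{curl}\operatorname{div}G+\operatorname{curl}H .
\end{equation*}
It is cleaner to avoid $\omega$ and instead take $\Phi$ with $\operatorname{curl}\Phi=W$, for instance $\Phi=\operatorname{curl}(-\Delta)^{-1}(\chi W)$ with a spatial cutoff $\chi$. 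Then $W=\operatorname{curl}\Phi$ on $B_{\rho_2}$ up to a harmonic remainder. Applying $\operatorname{curl}$ to the equation shows that $\operatorname{curl}$ of the remaining quantities satisfies a heat equation with right-hand side in divergence form.

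The plan is then as follows.
\begin{enumerate}
\item Split $W=W_1+W_2$, where $W_1$ solves the whole-space heat equation with cut-off data, $\partial_tW_1-\Delta W_1=\mathbb{P}\operatorname{div}(\eta G)+\mathbb{P}(\eta H)$, with zero initial value at $t=-1$. Maximal regularity for the heat equation and boundedness of the Leray projector $\mathbb{P}$ on $L^q$ (for $1<q<\infty$) give
\begin{equation*}
\|\nabla W_1\|_{L^s_tL^q_x}\lesssim \|G\|_{L^s_tL^q_x}+\|H\|_{L^s_tL^q_x}.
\end{equation*}
\item The remainder $W_2$ satisfies a homogeneous Stokes system in $Q_{\rho_1}$ with some pressure $P_2$. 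Then $\operatorname{curl}W_2$ is caloric there, and $\operatorname{div}W_2=0$ gives $\Delta W_2=-\operatorname{curl}\operatorname{curl}W_2$. So $W_2$ is smooth in space with bounds depending only on $\|W_2\|_{L^s_tL^1_x}$, uniformly in time. This is the Serrin-type argument: $W_2(\cdot,t)$ is a spatially harmonic function plus a smooth one.
\item Using mean-value interior estimates, bound $\|\nabla W_2\|_{L^s_tL^\infty_x(Q_{3/4})}\lesssim\|W_2\|_{L^s_tL^1_x(Q_{\rho_1})}$. Finally use $\|W_2\|_{L^s_tL^1_x}\le\|W\|_{L^s_tL^1_x}+\|W_1\|_{L^s_tL^1_x}$, together with a bound on $W_1$ in $L^s_tL^1_x$ of a bounded set, obtained from the same heat estimate and Sobolev/Poincaré.
\end{enumerate}

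The main obstacle is step 2, showing that $W_2$ has no pressure-induced time irregularity. Time regularity genuinely fails for Stokes (Serrin's example $W=a(t)\nabla h(x)$), but spatial derivatives stay controlled, which is why only $\nabla W$ is estimated. I would handle this by working entirely with $\operatorname{curl}$. If $q=1$, the $L^q$ theory fails at the endpoint, so for that case I would instead route $H$ and $G$ through weaker $L^1$-type heat kernel bounds; I expect the lemma is really intended for $1<q<\infty$.
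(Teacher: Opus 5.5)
Your argument is correct for $1<s,q<\infty$ and is essentially the argument of \cite[Appendix A.2]{CSYTlowerbnd} to which the paper defers. You split off the whole-space part $W_1(t)=\int_{-1}^{t}e^{(t-\tau)\Delta}\mathbb{P}\bigl[\operatorname{div}(\eta G)+\eta H\bigr](\tau)\,d\tau$ carrying the cut-off forcing. The $H$ term is even easier than $\operatorname{div}G$, since $\|\nabla e^{t\Delta}\mathbb{P}\|_{L^q\to L^q}\lesssim t^{-1/2}$ is integrable on a bounded time interval. You then control the homogeneous remainder through its caloric vorticity plus a spatially harmonic part. That harmonic part is only $L^s$ in time (Serrin's example), so the correct statement is the $L^s_tL^\infty_x$ bound of your step 3, not the uniform-in-time bound that step 2 suggests.

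One correction: the endpoints must be excluded in both exponents, not only $q=1$. For $q=1$ or $s=1$ the estimate is genuinely false for the $\operatorname{div}G$ term, because Calder\'on--Zygmund bounds and maximal regularity fail there, so no $L^1$ heat-kernel workaround can rescue it. This is harmless for the paper, which only uses $s=q=6$.
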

\begin{proof}
    Use similar arguments to \cite[Appendix A.2]{CSYTlowerbnd};  indeed, the case $H=0$ is contained there.
\end{proof}
\begin{Lemma}\label{Serrin-type bootstrap}
    Suppose that $v \in L^\infty(Q_1)$ is a distributional solution to the Navier-Stokes equations with forcing $F\in L^6(Q_1)$. Then
    \begin{align} \label{holdervorticity serrin}
        \norm{\nabla v}{L^\infty(Q_\frac{1}{2})} +\norm{\omega }{C^{0,\frac{1}{6}}_\text{par}(Q_\frac{1}{2})} \lesssim \left(1+\norm{v}{L^\infty(Q_1)}\right)\left( \norm{v}{L^\infty(Q_1)}^2 + \norm{v}{L^\infty(Q_1)}+\norm{F}{L^6(Q_1)}\right).
    \end{align}    
\end{Lemma}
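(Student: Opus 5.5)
The plan is a Serrin-type bootstrap on nested cylinders $Q_1\supset Q_{7/8}\supset Q_{3/4}\supset\dots\supset Q_{1/2}$, applying Lemma \ref{CSYTLemma} twice. First to $v$ itself, to get $\nabla v$ in a high Lebesgue space. Then to the vorticity equation, to get $\nabla\omega\in L^6_tL^6_x$ locally. Parabolic embedding then gives the H\"older bound on $\omega$. Finally, $\nabla v$ is recovered in $L^\infty$ from $\omega$ via a Biot--Savart/elliptic argument.

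\textbf{Step 1: gradient of $v$.} Write the equation as a Stokes system
\[
\partial_t v-\Delta v+\nabla p=\operatorname{div}(-v\otimes v)+F .
\]
Apply Lemma \ref{CSYTLemma} with $G=-v\otimes v$, $H=F$ and $s=q=6$. This gives
\[
\norm{\nabla v}{L^6(Q_{3/4})}\lesssim \norm{v}{L^\infty(Q_1)}^2+\norm{F}{L^6(Q_1)}+\norm{v}{L^\infty(Q_1)} .
\]
The bound is linear in the right-hand quantity of \eqref{holdervorticity serrin}, without the prefactor.

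\textbf{Step 2: gradient of $\omega$.} The vorticity solves
\[
\partial_t\omega-\Delta\omega=\operatorname{div}(\omega\otimes v-v\otimes\omega)+\nabla\times F .
\]
Since $F$ is only in $L^6$, I keep $\nabla\times F$ in divergence form: its components are $\partial_j$ of entries of $F$, so it is $\operatorname{div}$ of a tensor built from $F$. Thus every term on the right is of the form $\operatorname{div}(G)$ with
\[
\norm{G}{L^6}\lesssim \norm{v}{L^\infty}\norm{\nabla v}{L^6}+\norm{F}{L^6}.
\]
This is a heat equation, so it is a Stokes system with zero pressure, and the non-divergence-free $\omega$ causes no issue in the heat-equation version of Lemma \ref{CSYTLemma}. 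To be safe I would instead apply it componentwise to the scalar heat equation (or simply cite standard parabolic $L^p$ estimates). After rescaling $Q_{3/4}$ to $Q_1$, this yields
\[
\norm{\nabla\omega}{L^6(Q_{5/8})}\lesssim (1+\norm{v}{L^\infty})\bigl(\norm{v}{L^\infty}^2+\norm{v}{L^\infty}+\norm{F}{L^6}\bigr).
\]
The $L^6$ norm of $\omega$ itself is controlled by Step 1. This is where the factor $(1+\norm{v}{L^\infty})$ arises.

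\textbf{Step 3: H\"older continuity of $\omega$.} For the time regularity I use the parabolic regularity of the heat equation with divergence-form right-hand side $G\in L^6$: local solutions lie in $C^{0,\alpha}_{par}$ with $\alpha=1-5/6=1/6$. This follows from parabolic Campanato/Morrey estimates, since $L^6$ is above the critical exponent $n+2=5$. This is exactly the exponent $\tfrac16$ in \eqref{holdervorticity serrin}. It also gives the $L^\infty$ bound on $\omega$.

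\textbf{Step 4: $L^\infty$ bound on $\nabla v$.} At each time, cut off and write
\[
\nabla v=\nabla\,\mathrm{curl}\,(-\Delta)^{-1}(\varphi\omega)+\text{harmonic remainder},
\]
where $\varphi$ is a spatial cutoff. The Calder\'on--Zygmund operator maps $C^{0,1/6}$ to $C^{0,1/6}$, hence to $L^\infty$ locally. The harmonic remainder is controlled by interior harmonic estimates in terms of $\norm{v}{L^\infty}$.

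\textbf{Main obstacle.} I expect the main obstacle to be Step 3, the time-H\"older part. Campanato estimates on $\omega$ need an $L^2$ energy bound that absorbs the divergence-form forcing at every scale. I would first try Duhamel: subtract the caloric extension, and estimate the term $\nabla e^{(t-s)\Delta}\operatorname{div}$ with the kernel bound $|t-s|^{-1}$ against $L^6$ data. Hölder's inequality in space-time then produces the $1-\tfrac{5}{6}$ gain.
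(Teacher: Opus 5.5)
Your proposal is correct and is essentially the paper's argument. You use Lemma \ref{CSYTLemma} with $s=q=6$ to get $\nabla v\in L^6(Q_{\frac{3}{4}})$. You then write the vorticity equation with every right-hand term (including $\nabla\times F$) in divergence form with $L^6$ data, which gives the $C^{0,\frac{1}{6}}_{par}$ bound; the paper simply cites \cite[Proposition A.1]{NecasOnLeray} here, i.e.\ the parabolic Campanato fact of your Step 3. Finally, a fixed-time Biot--Savart/elliptic estimate (\cite[Proposition A.2]{NecasOnLeray}) gives $\nabla v\in L^\infty(Q_{\frac{1}{2}})$.

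Your Step 2 is not needed, and $\omega$ is in fact divergence-free. If you prove Step 3 by Duhamel, use the $L^6\to L^\infty$ bounds, $|t-s|^{-3/4}$ for $e^{(t-s)\Delta}\operatorname{div}$ and $|t-s|^{-5/4}$ for its gradient, with a split at $|t-s|\sim|x-x'|^2$; the $L^6\to L^6$ bound $|t-s|^{-1}$ is not integrable and gives no H\"older gain.
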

\begin{proof}
    First, using Lemma \ref{CSYTLemma}, we have
    \begin{align}
        \norm{\nabla v}{L^6(Q_\frac{3}{4})} &\lesssim \norm{F}{L^6(Q_1)}+\norm{v\otimes v}{L^6(Q_1)} + \norm{v}{L^6_tL^1_x(Q_1)}
        &\lesssim \norm{F}{L^6(Q_1)}+\norm{v}{L^\infty(Q_1)}^2 + \norm{v}{L^\infty(Q_1)}. \label{maximalregforstokes}
    \end{align}
    We proceed by considering the vorticity equation:
    \begin{align}\partial_t \omega  -\Delta \omega = \text{div}\left(\omega \otimes v- v \otimes \omega\right) + \nabla \times F.\end{align}
    We use \cite[Proposition A.1]{NecasOnLeray} together with (\ref{maximalregforstokes}) to find:
    \begin{align}
        \norm{\omega}{C^{0,\frac{1}{6}}_{par}(Q_{\frac{2}{3}})}&\lesssim \norm{F}{L^6(Q_\frac{3}{4})}+\norm{\omega}{L^2(Q_\frac{3}{4})}+\norm{|\omega||v|}{L^6(Q_\frac{3}{4})}
        \lesssim \norm{F}{L^6(Q_1)}+\norm{\nabla v}{L^6(Q_\frac{3}{4})}(1+\norm{v}{L^\infty(Q_1)}) \label{holquantest1}
        \\ &\lesssim \left(1+\norm{v}{L^\infty(Q_1)}\right)\left( \norm{v}{L^\infty(Q_1)}^2 + \norm{v}{L^\infty(Q_1)}+\norm{F}{L^6(Q_1)}\right). \label{holquantest2}
    \end{align}
    Then using \cite[Proposition A.2]{NecasOnLeray} gives for $t\in(-\frac{1}{4},0)$
    \begin{align}
       \norm{\nabla v(\cdot,t)}{L^\infty(B_\frac{1}{2})}\lesssim \norm{\nabla v(\cdot,t)}{C^{0,\frac{1}{6}}(B_{\frac{1}{2}})}+\norm{\nabla v(\cdot,t)}{L^1(B_\frac{1}{2})}\lesssim\norm{\omega(\cdot,t)}{C^{0,\frac{1}{6}}(B_{\frac{2}{3}})} + \norm{v(\cdot,t)}{L^2(B_\frac{2}{3})}+\norm{\nabla v(\cdot,t)}{L^1(B_\frac{1}{2})}. \nonumber
    \end{align}
    Thus, from (\ref{maximalregforstokes}) and (\ref{holquantest1})-(\ref{holquantest2}),
    \begin{align}\nonumber
       \norm{\nabla v}{L^\infty(Q_\frac{1}{2})}&\lesssim  \norm{\omega}{C^{0,\frac{1}{6}}_{par}(Q_{\frac{2}{3}})} + \norm{v}{L^\infty(Q_\frac{2}{3})} +\norm{\nabla v}{L^6(Q_\frac{3}{4})}\\&\lesssim \left(1+\norm{v}{L^\infty(Q_1)}\right)\left( \norm{v}{L^\infty(Q_1)}^2 + \norm{v}{L^\infty(Q_1)}+\norm{F}{L^6(Q_1)}\right).
    \end{align}
\end{proof}

\begin{Corollary}\label{CKN higher order derivatives for forcing}
Let $0<R<1$ and suppose that $(v,p)$ is a suitable weak solution to the Navier-Stokes equations in $Q_R$ with forcing $F\in L^6(Q_R)$. There exists a universal constant $\varepsilon_*\in(0,1)$ such that for all $0<\varepsilon\leq \varepsilon_*$ the following holds true.
If:
   \begin{equation}\label{CKN forced assump}                R^{-2}\int_{Q_R}|v|^3+|p|^{3/2}\;dxds+R^{13}\int_{Q_R}|F|^6\;dxdt\leq \varepsilon
    \end{equation}
    then
    for $k=0,1:$
    \begin{align}\norm{\nabla^k v}{L^\infty(Q_{\frac{R}{4}})}&\lesssim_k \frac{\varepsilon^\frac{1}{6}}{R^{k+1}},  & \norm{\omega}{C^{0,\frac{1}{6}}_\text{par}(Q_\frac{R}{4})} &\lesssim \frac{\varepsilon^\frac{1}{6}}{R^{2+\frac{1}{6}}}.\end{align}
\end{Corollary}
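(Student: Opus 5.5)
First rescale to unit size. Set $v_R(x,t)=Rv(Rx,R^2t)$, $p_R(x,t)=R^2p(Rx,R^2t)$ and $F_R(x,t)=R^3F(Rx,R^2t)$. These solve the forced Navier-Stokes system on $Q_1$ and inherit suitability, since the local energy inequality is invariant under this scaling. A change of variables gives
\begin{align}
\int_{Q_1}|v_R|^3+|p_R|^{3/2}\,dxds=R^{-2}\int_{Q_R}|v|^3+|p|^{3/2}\,dxds, \qquad \int_{Q_1}|F_R|^6\,dxds=R^{18-5}\int_{Q_R}|F|^6\,dxds=R^{13}\int_{Q_R}|F|^6 .
\end{align}
So (\ref{CKN forced assump}) is exactly the scale-invariant smallness condition on $Q_1$. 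It therefore suffices to prove the case $R=1$ and then undo the scaling. One has $\nabla^k v(x,t)=R^{-1-k}(\nabla^k v_R)(x/R,t/R^2)$, and the $C^{0,1/6}_{par}$ seminorm of $\omega=R^{-2}\omega_R(\cdot/R,\cdot/R^2)$ picks up the additional factor $R^{-1/6}$. This produces exactly the claimed powers $R^{-(k+1)}$ and $R^{-2-1/6}$. Since $R<1$, the $L^\infty$ part of the Hölder norm, which carries only $R^{-2}$, is dominated by $R^{-2-1/6}$.

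On $Q_1$, the first step is an $\varepsilon$-regularity statement of Caffarelli–Kohn–Nirenberg type with forcing in $L^6$, which is subcritical since it has scaling exponent $13>0$. The conclusion I want is
\[
\norm{v}{L^\infty(Q_{1/2})}\lesssim \varepsilon^{1/6}
\]
whenever $\int_{Q_1}|v|^3+|p|^{3/2}+\int_{Q_1}|F|^6\le\varepsilon\le\varepsilon_*$. I would prove this by the standard iteration, for example Lin's blow-up or compactness argument, or the Ladyzhenskaya–Seregin scheme. That scheme shows the decay of the scaled quantity
\[
r^{-2}\int_{Q_r(z)}|v|^3+|p|^{3/2}
\]
at every $z\in Q_{1/2}$. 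The forcing enters the local energy inequality only through $2\int f\cdot v\phi$, which Hölder bounds by $\norm{F}{L^6}\norm{v}{L^3}$ and which carries a positive power of $r$ at small scales. Iterating the decay gives Campanato/Morrey control, hence boundedness, with a bound that is a power of the smallness. I expect to arrange the power as $\varepsilon^{1/6}$, for instance via the $\varepsilon^{1/6}$ coming from $\norm{F}{L^6}$ and $\varepsilon^{1/3}\le\varepsilon^{1/6}$ coming from $\norm{v}{L^3}$.

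The main obstacle is exactly this forced $\varepsilon$-regularity step, with its explicit power of $\varepsilon$. I would cite or adapt existing forced CKN results, for instance those used in \cite{TruncationMethod}/\cite{Localizedblowup}. The point to check is that the pressure decomposition (harmonic part plus a local singular-integral part) still works with the forcing term, which holds because the forcing is divergence-free-projectable and lies in $L^6\subset L^{3/2}$-type spaces on bounded sets.

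Finally, apply Lemma~\ref{Serrin-type bootstrap}, rescaled from $Q_1$ to $Q_{1/2}$, so that it yields bounds on $Q_{1/4}$. With $\norm{v}{L^\infty(Q_{1/2})}\lesssim\varepsilon^{1/6}\le1$ and $\norm{F}{L^6}\le\varepsilon^{1/6}$, the right-hand side of (\ref{holdervorticity serrin}) is $\lesssim \varepsilon^{1/3}+\varepsilon^{1/6}+\varepsilon^{1/6}\lesssim\varepsilon^{1/6}$. This bounds $\nabla v$ and the parabolic Hölder norm of $\omega$ on $Q_{1/4}$ by $\varepsilon^{1/6}$, and scaling back as described above completes the proof.
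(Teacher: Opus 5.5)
Your proposal is correct and follows essentially the same route as the paper. Both arguments rescale to $R=1$, then use a forced Caffarelli--Kohn--Nirenberg $\varepsilon$-regularity result (the paper simply cites \cite{CKN82}, which allows forcing in $L^q$ with $q>5/2$) to get a small $L^\infty$ bound on $Q_{1/2}$, and then feed this into Lemma~\ref{Serrin-type bootstrap} to obtain the gradient and H\"older bounds on $Q_{1/4}$.

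The only difference is that you aim for $\|v\|_{L^\infty(Q_{1/2})}\lesssim\varepsilon^{1/6}$ rather than the paper's $\varepsilon^{1/3}$. This is all the conclusion needs, since the force already contributes $\varepsilon^{1/6}$ in the bootstrap. It is also the safer claim: a divergence-free $L^6$ force concentrated at an intermediate scale can keep (\ref{CKN forced assump}) intact while producing velocities of order $\varepsilon^{4/17}\gg\varepsilon^{1/3}$.
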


\begin{proof}
    By rescaling according to (\ref{NSrescale}), we may assume that $R=1.$ By \cite[pp. 7-9]{CKN82}, provided that $0<\varepsilon\leq\varepsilon_*$ (where $\varepsilon_*$ is a sufficiently small universal constant), we then have that $\norm{v}{L^\infty(Q_\frac{1}{2})}\lesssim\varepsilon^\frac{1}{3}$.
    Now, as $\norm{F}{L^6(Q_1)}\leq \varepsilon^\frac{1}{6}$, we see that applying Lemma \ref{Serrin-type bootstrap} (rescaled) gives: 
    \begin{align}
        \norm{\nabla v}{L^\infty(Q_{\frac{1}{4}})}+\norm{\omega }{C^{0,\frac{1}{6}}_\text{par}(Q_\frac{1}{4})} \lesssim \varepsilon^\frac{1}{6}.
    \end{align}
\end{proof}
We also have the following parabolic Caccioppoli inequality:
\begin{Proposition}\label{CaccioppoliBall}
    Let $B_{(2)}\subset B_{(1)} \subset \R^3$ be two balls and  $I_{(2)}\subset I_{(1)} \subset \R$ be two bounded intervals.
    Suppose that $W, H:B_{(1)}\times I_{(1)}\to \R^3$, $ G:B_{(1)}\times I_{(1)}\to \R^{3\times 3}$ are smooth and satisfy
    \begin{align}
        \partial_tW-\Delta W = \operatorname{div}(G) + H.
    \end{align}
    Suppose also that $W,G,H\in L^2(I_{(1)}\times B_{(1)})$.
    Then W satisfies 
    \begin{align}
        \int_{I_{(2)}}\int_{B_{(2)}} |\nabla W|^2\;dxds &\lesssim\left(\frac{1}{\inf I_{(2)}-\inf I_{(1)}}+ \frac{1}{\mathrm{dist}(\partial B_{(1)},\partial B_{(2)})^2} \right)\left( \int_{I_{(1)}}\int_{B_{(1)}}|W|^2\;dxds\right) \\&\qquad\qquad+ \norm{G}{L^2(I_{(1)}\times B_{(1)})}^2+\norm{H}{L^2(I_{(1)}\times B_{(1)})}\norm{W}{L^2(I_{(1)}\times B_{(1)})}.
    \end{align}
\end{Proposition}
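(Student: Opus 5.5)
The plan is the standard energy method with a space-time cutoff. I will pick a smooth cutoff $\phi(x,t)=\eta(x)\chi(t)$ with $\eta\in C_0^\infty(B_{(1)})$, $0\le\eta\le1$, $\eta\equiv1$ on $B_{(2)}$ and $|\nabla\eta|\lesssim \mathrm{dist}(\partial B_{(1)},\partial B_{(2)})^{-1}$. For the time factor, $\chi$ vanishes at $t=\inf I_{(1)}$, equals $1$ on $I_{(2)}$ (more precisely for $t\ge\inf I_{(2)}$), and satisfies $0\le\chi'\lesssim(\inf I_{(2)}-\inf I_{(1)})^{-1}$. Only the lower endpoint matters because we integrate forward in time, which explains why only $\inf$'s appear in the statement. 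Since everything is smooth, I will test the equation against $W\phi^2$ and integrate over $B_{(1)}\times(\inf I_{(1)},t)$ for $t\in I_{(1)}$, with no boundary terms in space. If $I_{(1)}$ is open at its endpoints, I will replace it by slightly smaller intervals and pass to the limit, using $W\in L^2$.

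The identity obtained is
\begin{align}
\tfrac12\int_{B_{(1)}}|W(t)|^2\phi^2\,dx+\int\!\!\int|\nabla W|^2\phi^2
=\int\!\!\int |W|^2\phi\,\partial_t\phi-2\int\!\!\int \phi\,\nabla W:(W\otimes\nabla\phi)-\int\!\!\int G:\nabla(W\phi^2)+\int\!\!\int H\cdot W\phi^2 .
\end{align}
I will then bound the right-hand side term by term.
\begin{itemize}
\item The time-derivative term is at most $\sup|\chi'|\,\|W\|_{L^2}^2$.
\item The cross term $2\phi\nabla W\cdot W\nabla\phi$ is handled by Young's inequality: absorb $\tfrac14\int|\nabla W|^2\phi^2$ and leave $C\sup|\nabla\eta|^2\|W\|_{L^2}^2$.
\item The forcing $G$ term splits as $G:\nabla W\phi^2+2G:(W\otimes\nabla\phi)\phi$. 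For the first piece, Young absorbs another $\tfrac14\int|\nabla W|^2\phi^2$ at the cost of $C\|G\|_{L^2}^2$. The second piece is bounded by $\|G\|_{L^2}^2+\sup|\nabla\eta|^2\|W\|_{L^2}^2$.
\item The $H$ term is estimated directly by Cauchy--Schwarz as $\|H\|_{L^2}\|W\|_{L^2}$. No absorption is needed, and this matches the product form in the statement.
\end{itemize}

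After absorbing, the left-hand side controls $\tfrac12\int\!\!\int|\nabla W|^2\phi^2$ over $B_{(1)}\times(\inf I_{(1)},t)$. Taking $t=\sup I_{(2)}$, or $t\uparrow\sup I_{(2)}$, and using $\phi=1$ on $B_{(2)}\times I_{(2)}$ yields the claim, after dropping the nonnegative $|W(t)|^2$ term.

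There is no real obstacle. The only care points are choosing a temporal cutoff that depends only on $\inf I_{(2)}-\inf I_{(1)}$ and treating the $H$ term without Young's inequality so that its contribution stays in the product form of the statement.
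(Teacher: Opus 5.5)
Your proposal is correct. Testing with $W\phi^2$ for $\phi=\eta(x)\chi(t)$, absorbing the gradient terms by Young's inequality, and keeping the $H$ term in Cauchy--Schwarz product form is the standard argument; the paper states this proposition without proof as a standard parabolic Caccioppoli estimate. The annular variant in Remark \ref{CaccioppoliAnnulus} comes from the same computation with only a spatial cutoff, which is why the endpoint term $\int_{A_{(1)}}\left\vert |W(x,b)|^2-|W(x,a)|^2\right\vert dx$ replaces the factor $1/(\inf I_{(2)}-\inf I_{(1)})$ there.
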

\begin{Remark}\label{CaccioppoliAnnulus}
    We also use the following version for nested annuli $A_{(2)}\subset A_{(1)}$ and $I=[a,b]\subset \R$:
     \begin{align}
        \int_{I}\int_{A_{(2)}} |\nabla W|^2\;dxds &\lesssim \int_{A_{(1)}} \left\vert|W(x,b)|^2-|W(x,a)|^2\right\vert\;dx+\frac{1}{\mathrm{dist}(\partial A_{(1)},\partial A_{(2)})^2} \int_{I}\int_{A_{(1)}}|W|^2\;dxds\\&\qquad\qquad+ \norm{G}{L^2(I\times A_{(1)})}^2+\norm{H}{L^2(I\times A_{(1)})}\norm{W}{L^2(I\times A_{(1)})}
        \\ &\lesssim |b-a|^\frac{\alpha}{2}|A_{(1)}|\cdot\norm{W}{L^\infty(I\times A_{(1)})}\norm{W}{C^{0,\alpha}_{par}(I\times A_{(1)})}+\frac{1}{\mathrm{dist}(\partial A_{(1)},\partial A_{(2)})^2} \int_{I}\int_{A_{(1)}}|W|^2\;dxds\\&\qquad\qquad+ \norm{G}{L^2(I\times A_{(1)})}^2+\norm{H}{L^2(I\times A_{(1)})}\norm{W}{L^2(I\times A_{(1)})}.
    \end{align}
\end{Remark}
\subsection{Local-in-space smoothing with incorporated forcing term}
Local-in-space smoothing without forcing is in the Kang, Miura and Tsai's paper \cite{KMT21}. In the following, we incorporate\footnote{Compare with Theorem 1.1, Remark 1.2 and Theorem 3.1 of \cite{KMT21}.} forcing into \cite{KMT21} and obtain an analogue of \cite[Corollary 1]{Localizedblowup}. For related results with divergence-form forcing, see \cite[Lemma 2.3]{MicropolarPartialRegularity}
\begin{Proposition}\label{KMTwithforce}
     There exists a positive constant $c_1\in (0,1)$ such that the following holds true. Let $M,N\geq 1$ be sufficiently large, let $0<T_1 \leq c_1M^{-18}N^{-52}$ and suppose that $(v,p)$ is a suitable weak solution to the Navier-Stokes equations with forcing $F$ on $B_4\times (0,T_1)$ such that
    \begin{align} \label{KMTintial}
        \lim_{t\to 0^+}\norm{v(\cdot,t)-v_0}{L^2(B_4)}=0 \qquad \text{with} \; v_0\in L^2(B_4),&
    \\
        \norm{v}{L^\infty_tL^2_x\cap L^2_t\dot{H}^1_x(B_4\times(0,T_1))}^2+\norm{p}{L^\frac{3}{2}(B_4\times(0,T_1))} \leq M,    &
        \\       \norm{F}{L^6(B_3\times(0,T_1))} +\norm{v_0}{L^6(B_3)}\leq N.\label{KMTassump}&
    \end{align}    
    Then 
    \begin{equation}
       |\nabla^jv(x,t)| \lesssim_j t^{-\frac{j+1}{2}} \qquad \text{for} \; (x,t)\in B_1\times (0,\,T_1) , \quad j =0,1.  \label{localinspacesmoothing}
    \end{equation}
\end{Proposition}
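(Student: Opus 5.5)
We follow the strategy of Kang--Miura--Tsai \cite{KMT21}: split $v$ into a part driven by the small-scale, subcritical initial data and a remainder with small local energy. The forcing is absorbed into the first part, where the subcritical $L^6$ control suffices. Concretely, fix a cutoff $\chi\in C^\infty_0(B_3)$ with $\chi=1$ on $B_2$. Let $a$ solve the forced Stokes (or heat) problem on $\R^3\times(0,T_1)$ with data $\chi v_0$ (Bogovskii-corrected to be divergence free) and forcing $\chi F$.

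\textbf{Step 1: estimates for $a$.} From the heat kernel and Duhamel's formula, with $\norm{\chi v_0}{L^6}+\norm{\chi F}{L^6_{t,x}}\lesssim N$, we get for $t\in(0,T_1)$
\begin{align}
\norm{a(\cdot,t)}{L^\infty}\lesssim t^{-\frac14}N + t^{\frac14}N,\qquad \norm{a}{L^\infty_tL^6_x}\lesssim N.
\end{align}
Here the $L^6$ forcing gives $\int_0^t (t-s)^{-1/4}\norm{F(s)}{L^6}\,ds\lesssim t^{7/12}N$ by H\"older. Gradient bounds follow similarly. All of these are $\le t^{-1/2}$ multiplied by powers of $T_1^{1/4}N$, which are small because $T_1\le c_1M^{-18}N^{-52}$.

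\textbf{Step 2: local energy for $w=v-a$.} The remainder $w$ satisfies a perturbed NSE with no forcing and zero initial data near $B_2$ in the $L^2$ sense. We use the local energy inequality \eqref{LocalEnergyIneq} tested against $\phi$ supported in $B_3$. Bounding cross terms such as $a\cdot\nabla w\cdot w$ and $w\otimes a:\nabla w$ via Step 1, and bounding the pressure (split into a harmonic part and a local Riesz part) by $M$, we get
\begin{align}
\sup_{t<T_1}\int_{B_2}|w|^2+\int_0^{T_1}\!\!\int_{B_2}|\nabla w|^2\lesssim T_1^{\beta}M^{\gamma}N^{\delta}.
\end{align}
A Gronwall argument gives this smallness for small $t$, as in \cite[Theorem 3.1]{KMT21}.

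\textbf{Step 3: pointwise bounds.} For $(x,t)\in B_1\times(0,T_1)$ we apply Corollary \ref{CKN higher order derivatives for forcing} to $v$ on $Q_R(x,t)$ with $R=\sqrt{t}/2$. We need
\begin{align}
R^{-2}\int_{Q_R}|v|^3+|p|^{3/2}+R^{13}\int_{Q_R}|F|^6\le\varepsilon_*.
\end{align}
For the forcing term, $R^{13}N^6\le T_1^{13/2}N^6$ is tiny. For the velocity, split $|v|^3\lesssim|a|^3+|w|^3$. The $a$ part gives $R^{-2}\cdot R^5\cdot t^{-3/4}N^3\lesssim t^{3/4}N^3$, which is small. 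The $w$ part is handled by interpolation from Step 2 using the smallness in terms of $T_1$. The pressure needs a decomposition into a local part (controlled by $|v|^2$ via Calder\'on--Zygmund) and a harmonic part controlled by $M$ with an $R^{3}$ gain. The Corollary then gives $|\nabla^j v|\lesssim \varepsilon^{1/6}R^{-j-1}\lesssim t^{-(j+1)/2}$.

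\textbf{Main obstacle.} The main obstacle is Step 3: the CKN quantity must be small at every scale $\sqrt t$ uniformly in $t\in(0,T_1)$, not just at $t\approx T_1$. The $w$-energy smallness from Step 2 has to decay like a positive power of $t$, i.e.\ $\int_{Q_R}|w|^3\ll R^2$. To get this, we would run Step 2 on each interval $(0,t)$ and obtain energy bounded by $t^{\beta}$ times constants. The specific exponents $18$ and $52$ in $T_1$ should come from balancing these powers of $M$ and $N$.
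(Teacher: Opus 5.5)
There is a genuine gap. Your splitting $v=a+w$ (a linear part carrying $\chi v_0$ and $\chi F$, plus a remainder with zero data in $B_2$) is a different route from the paper's, and the paper never splits $v$. Instead it rescales about each $x_0\in B_1$ by $\lambda=(\varepsilon_0/(2C_{leb}N))^2$. This turns the subcritical $L^6$ bounds into $\norm{\tilde v_0}{L^3(B_3)}+\norm{\tilde F}{L^6}\le\varepsilon_0$, so the critical Morrey quantity $\sup_r r^{-1}\int_{B_r}|\tilde v_0|^2$ is small at \emph{every} scale. It then runs the Kang--Miura--Tsai local energy argument on $\mathcal{E}_{R,r_1}(t)=\sup_{R<r\le r_1}E_r(t)$, with a continuation lemma giving $\mathcal{E}_{r,r_1}(t)\le 10\varepsilon_0$ whenever $t\le c\,r^2$. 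Taking $r\sim\sqrt t$ is exactly the scale-invariant smallness that Corollary \ref{CKN higher order derivatives for forcing} needs. The exponents come from $T_0=\lambda^{-2}T_1\le c_0(2\lambda^{-4/3}M)^{-18}$, i.e.\ $\lambda^{26}\sim N^{-52}$. Your proposal leaves precisely this uniform-in-$t$ smallness unproved. What you call the main obstacle is the whole content of the proposition, and you only assert it.

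Here is what is missing. Set $\delta(t):=\sup_{s<t}\int_B|w|^2+\int_0^t\int_B|\nabla w|^2$ on a fixed ball $B$. Gagliardo--Nirenberg gives $R^{-2}\int_{Q_R}|w|^3\lesssim t^{-3/4}\delta(t)^{3/2}$ at $R\sim\sqrt t$, so you need $\delta(t)=o(t^{1/2})$ with explicit constants. Decay like ``a positive power of $t$'' is not enough; the power must exceed the critical $1/2$.

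Step 2 does not deliver this, and a Gronwall argument on one ball cannot, because the cutoff and pressure terms in the local energy inequality see $w$ where $\phi<1$. The harmonic part of the pressure is only $L^{3/2}$ in time and contributes $\lesssim Mt^{1/3}\delta^{1/2}$, so at best $\delta\lesssim M^2t^{2/3}$. The cubic and local-pressure terms, if fed only the a priori $O(M+N^2)$ energy bound on the larger ball, contribute size $t^{1/4}$, which is worse than critical.

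To close your route you would need a finite chain of shrinking balls, each step improving the exponent (for instance $1/4\to 11/24\to 9/16$). You would also have to track the powers of $M$ and $N$ and check they are compatible with the stated hypothesis $T_1\le c_1M^{-18}N^{-52}$. The exponents $18$ and $52$ do not arise from your scheme as written; in the paper they come from the rescaling.

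Separately, testing with $\phi$ supported in $B_3$ is wrong as set up. There $w(\cdot,0)=(1-\chi)v_0$ minus the Bogovskii correction is nonzero on $B_3\setminus B_2$, and $w$ feels the forcing $(1-\chi)F$ there. The test function must be supported where $\chi=1$.
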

\begin{proof}
    \underline{\textbf{Step 1: Rescaling:}} Let $x_0\in B_1$ and rescale and translate to the origin; set $\tilde{v}(x,t):= \lambda v(x_0+\lambda x,\lambda^2t)$ for $(x,t)\in B_4\times (0,\lambda^{-2}T_1)$ (with corresponding pressure, initial data, and forcing rescaled as in (\ref{NSrescale})) with $\lambda:= \left(\frac{\varepsilon_0}{2C_{leb}N}\right)^2<\frac{1}{2}$ (for $N$ sufficiently large), where $C_{leb}$ is the constant arising from the continuous embedding $L^6(B_3)\hookrightarrow L^3(B_3),$ and $\varepsilon_0\in(0,1)$ is a small universal constant to be determined later. 
    This rescaling gives
    \begin{align} \norm{\tilde{v}_0}{L^3(B_3)}+\norm{\tilde{F}}{L^6_{t,x}(B_3\times (0,T_0))}&\leq \varepsilon_0,\label{initialsmallnessassump}\\
        \norm{\tilde{v}}{L^\infty_tL^2_x\cap L^2_t\dot{H}^1_x (B_4\times (0,T_0))}^2+\norm{\tilde{p}}{L^\frac{3}{2}_{t,x}(B_4\times (0,T_0))}&\leq 2\lambda^{-\frac{4}{3}}M=:\tilde{M} \label{L3assumptsKMT}
    \end{align}
    where $0<T_0:=\lambda^{-2}T_1\leq c_0\tilde{M}^{-18}$ ($c_0$ is sufficiently small as determined later; $c_1$ is defined by this relation).
    We closely follow the proof and notation of Theorem 3.1 in \cite{KMT21} (with $B_4$ replaced by $B_2$ in the assumptions) with incorporated forcing to obtain
    \begin{align}
        |\nabla^j \tilde{v}(0,t)|\lesssim \frac{1}{t^{-\frac{j+1}{2}}} \quad \forall t\in (0,T_0)\quad \text{and}\quad j=0,1.
    \end{align}
    Undoing the previous rescaling will then yield the claimed result (\ref{localinspacesmoothing}). 
    
    For ease of notation, we will now just write $(v,p,v_0,F)$ instead of $(\tilde{v},\tilde{p},\tilde{v}_0,\tilde{F})$ and $M$ instead of $\tilde{M}.$ For $R>0,$ define $N_R :=  \sup_{R\leq r\leq 1 }\frac{1}{r}\int_{B_r}|v_0|^2\;dx$ and note that \begin{align}
        N_R\leq  \left(\frac{4\pi}{3}\right)^\frac{1}{3}\norm{v_0}{L^3(B_1)}\leq 2\varepsilon_0.
    \end{align}
    For $0<t\leq T_0$ and $0<a<b<1,$ define 
    \begin{align}
        \mathcal{E}_{a,b}(t):= \sup_{a<r\leq b}E_r(t),
    \end{align}
    where 
    \begin{align}
        E_r(t):= \sup_{0<s<t}\frac{1}{r}\int_{B_r}|v(x,s)|^2\;dx + \frac{1}{r}\int_0^t\int_{B_r}|\nabla v|^2\;dxds + \frac{1}{r^2}\int_0^t\int_{B_r}|p|^\frac{3}{2}\;dxds.
    \end{align}
    Let $0<r_1<\frac{1}{3}$ be determined later. Incorporating forcing into the local energy inequality (see \eqref{LocalEnergyIneq}) gives the inequality\footnote{This corresponds to choosing $\delta=10\varepsilon_0$ in line (3.4) of \cite{KMT21}.} $R\leq r\leq r_1:$ 
    \begin{align}
        E_r(t) &\leq 4\varepsilon_0 + \frac{C}{r^3}\int_0^t\int_{B_{2r}}|v|^2 + \frac{C}{r^2}\int_0^t\int_{B_{2r}}|v|^3 + \frac{C}{r^2}\int_0^t\int_{B_{2r}}|p|^\frac{3}{2} +\frac{C}{r}\int_0^t\int_{B_{2r}} F\cdot v \;dxdt
        \\ &\leq 4\varepsilon_0 + I_{lin} + I_{nonlin} +  I_{pr}+I_{F}. \label{KMT3.4}
    \end{align}
    Some terms of (\ref{KMT3.4}) may be treated in exactly the same way as in \cite{KMT21}: for any $R\leq r\leq \frac{r_1}{2}$, we have
    \begin{align}
        \left| I_{lin}\right| &\leq \frac{C}{r^2}\int_0^t\mathcal{E}_{R,r_1}(s)\;ds,
    &\label{nonlinint}
        \left| I_{nonlin}\right| &\leq \varepsilon\mathcal{E}_{R,r_1}(t)+ \frac{C_\varepsilon}{r^2}\int_0^t\mathcal{E}_{R,r_1}^3(s)+\mathcal{E}_{R,r_1}(s)\;ds
    \end{align}
    where $\varepsilon \in (0,1)$ is yet to be determined.  
    With the introduction of forcing, the analysis of the pressure term needs certain adjustment compared to \cite{KMT21}.
    \\ \underline{\textbf{Step 2: Preliminary pressure estimates:}} Similarly to \cite{KMT21}, for $\varrho \in (3r,1]$ we let $\psi\in C^\infty_0(B_{2\rho};[0,1])$ with $\psi=1$ on $B_{\rho}$ and then decompose the pressure $p=\tilde{p}+p_f+p_h$, where\footnote{Where $\mathcal{R}_i$ denotes the Riesz transform in the $i$-th coordinate.} $\tilde{p}=\mathcal{R}_i\mathcal{R}_j(\psi v_iv_j)$ and $p_f:= (\Delta)^{-1}\text{div}(\psi F) - ((\Delta)^{-1}\text{div}(\psi F) )_{B_{\varrho}}$ and $p_h$ is harmonic on $B_{\rho}$.
    Using that $p_h$ is harmonic, we see
    \begin{align}
        \int_{B_{2r}}|p|^\frac{3}{2}\;dx &\leq C\int_{B_{2r}}|\tilde{p}|^\frac{3}{2}\;dx+ C\int_{B_{2r}}|p_h|^\frac{3}{2}\;dx+ C\int_{B_{2r}}|p_f|^\frac{3}{2}\;dx
        \\ &\leq C\int_{B_{\varrho}}|\tilde{p}|^\frac{3}{2}\;dx+ C\left(\frac{r}{\varrho}\right)^3\int_{B_{\varrho}}|p|^\frac{3}{2}\;dx+ C\int_{B_{\varrho}}|p_f|^\frac{3}{2}\;dx.
    \end{align}
    By Poincar\'e's inequality, H\"older's inequality and Calder\'on-Zygmund bounds on the support of $\psi$,  
    \begin{align}
        \int_{B_{\varrho}}|p_f|^\frac{3}{2}\;dx \leq C \varrho^{\frac{3}{2}}\int_{B_{\varrho}}|\nabla (\Delta)^{-1}\text{div}(\psi F)|^\frac{3}{2}\;dx &\leq  C\varrho^{\frac{15}{4}}\left(\int_{B_{2\varrho}}|F|^6\;dx\right)^\frac{1}{4}.
    \end{align}
    By Calder\'on-Zygmund bounds, we also have
    \begin{align}
        \int_{B_{\varrho}}|\tilde{p}|^\frac{3}{2}\;dx \leq C\int_{B_{2\varrho}}|v|^3\;dx.
    \end{align}
    Therefore, (using $r<\varrho$)
    \begin{align}\label{KMTpressureestim}
        \int_{B_{2r}}|p|^\frac{3}{2}\;dx \leq C\int_{B_{2\varrho}}|v|^3\;dx + C'\left(\frac{r}{\varrho}\right)^3\int_{B_\varrho}|p|^\frac{3}{2}\;dx + C\varrho^{\frac{15}{4}}\norm{F}{L^6_x(B_{2\varrho})}^\frac{3}{2}. 
    \end{align}
    Integrating in time, we have
    \begin{align} \label{KMTpressureestim2}
         |I_{pr}| &\leq \frac{C}{r^2}\int_0^t\int_{B_{2\varrho}}|v|^3\;dxds + \frac{C'}{\varrho^2}\frac{r}{\varrho}\int_0^t\int_{B_\varrho}|p|^\frac{3}{2}\;dxds + C\left(\frac{\varrho}{r}\right)^2\varrho^{\frac{7}{4}}\int_0^t\norm{F}{L^6_x(B_{2\varrho})}^\frac{3}{2}\;ds.
    \end{align}
    Let $0\leq r_0\leq \frac{r_1}{3}$ where $r_0$ is yet to be determined. We now estimate $\mathcal{E}_{R,r_1}$ by estimating $\mathcal{E}_{R,r_0}$ and $\mathcal{E}_{r_0,r_1}$ separately as in \cite{KMT21}.
    \\\underline{\textbf{Step 3i: estimate $\mathcal{E}_{r_0,r_1}(t)$}:}
    In this case, for $E_r(t)$, let $R\leq r\leq r_0.$ For $I_{pr}$ and $I_F$, we must adapt \cite{KMT21} due to the forcing term; 
    as in \cite{KMT21}, we set $r_0:= \frac{r_1}{20C'}$ and $\rho = 10C'r$. We substitute this choice into (\ref{KMTpressureestim2}) and use (\ref{nonlinint}) to see  
    \begin{align}
         |I_{pr}| &\leq \varepsilon\mathcal{E}_{R,r_1}(t)+ \frac{C_\varepsilon}{r^2}\int_0^t\mathcal{E}_{R,r_1}^3(s)+\mathcal{E}_{R,r_1}(s)\;ds + \frac{1}{10}\mathcal{E}_{R,r_1}(t) +Cr_1^{\frac{7}{4}}T_0^{\frac{3}{4}}\norm{F}{L^6_{t,x}(B_3\times(0,T_0))}^\frac{3}{2}
         \\ &\leq \varepsilon\mathcal{E}_{R,r_1}(t)+ \frac{C_\varepsilon}{r^2}\int_0^t\mathcal{E}_{R,r_1}^3(s)+\mathcal{E}_{R,r_1}(s)\;ds + \frac{1}{10}\mathcal{E}_{R,r_1}(t) +\frac{\varepsilon_0}{10},
    \end{align} 
    using that $T_0\leq c_0M^{-18}$ with $c_0$ sufficiently small and $r\leq r_0 \leq r_1\leq \frac{1}{3}$.
    Finally, using (\ref{initialsmallnessassump}) and H\"older's inequality, the definition of $\mathcal{E}$ and Young's inequality (and that $r<r_0$, $t<T_0=c_0M^{-18}$ here) gives us  
    \begin{align}
        \left| I_{F}\right| &\leq  Cr_1^\frac{1}{2}\mathcal{E}^\frac{1}{2}_{R,r_1}(t) \norm{F}{L^6(B_{2r}\times(0,T_0))}T_0^\frac{5}{6}\leq \frac{1}{100}\mathcal{E}_{R,r_1}(t) +\frac{\varepsilon_0}{10},
    \end{align}
    provided that $M$ is sufficiently large.
    Substituting all these estimates into (\ref{KMT3.4}), we see
    \begin{equation}
        \mathcal{E}_{R,r_0}(t) \leq \frac{21\varepsilon_0}{5} + \left( 2\varepsilon +\frac{1}{10} +\frac{1}{100} \right) \mathcal{E}_{R,r_1}(t) + \frac{C_\varepsilon}{R^2}\int_0^t\mathcal{E}_{R,r_1}^3(s)+\mathcal{E}_{R,r_1}(s)\;ds.
    \end{equation}
    Choosing $\varepsilon=\frac{1}{40}$ gives
    \begin{equation}
        \mathcal{E}_{R,r_0}(t) \leq \frac{21\varepsilon_0}{5} + \frac{4}{25} \mathcal{E}_{R,r_1}(t) + \frac{C}{R^2}\int_0^t\mathcal{E}_{R,r_1}^3(s)+\mathcal{E}_{R,r_1}(s)\;ds.
    \end{equation}
    \\\underline{\textbf{Step 3ii: estimate $\mathcal{E}_{r_0,r_1}(t)$}:} In this case, for $E_r(t)$, let $r_0\leq r\leq r_1$. Again, some terms may be treated exactly as in \cite{KMT21}, taking $r_1 := c_1M^{-\frac{3}{2}}$ for some sufficiently small constant $c_1>0$ and using that $T_0\leq c_0M^{-18}$ for some sufficiently small constant $c_0>0$, we see that for $M$ sufficiently large: 
    \begin{align}
         \left| I_{lin}\right| &\leq \frac{CT_0M}{r_0^3}\leq \frac{\varepsilon_0}{100} & &\text{and} &
    \label{KMTnonlinMestim}
        \left| I_{nonlin}\right| &\leq \frac{CT_0^\frac{1}{4}M^\frac{3}{2}}{r_0^2}+\frac{CT_0M^\frac{3}{2}}{r_0^2} \leq \frac{\varepsilon_0}{100}.
    \end{align}
    Setting $\varrho=1$ and substituting (\ref{KMTnonlinMestim}) into (\ref{KMTpressureestim2}),using H\"older's inequality and using the definition of $r_0:=\frac{r_1}{20C'}$ and $T_0$ then yields:
    \begin{equation}
        \left| I_{pr}\right| \leq \frac{CT_0^\frac{1}{4}M^\frac{3}{2}}{r_0^2}+\frac{CT_0M^\frac{3}{2}}{r_0^2} + Cr_1M^\frac{3}{2}+ \frac{C}{r_0^2}T_0^{\frac{3}{4}}\norm{F}{L^6(B_3\times(0,T_0))}^\frac{3}{2} \leq \frac{\varepsilon_0}{100}.
    \end{equation}
    We then estimate $|I_F|$ by using H\"older's inequality and (\ref{KMTnonlinMestim}):
    \begin{align}
         \left| I_{F}\right| &\leq \frac{C}{r} \left(\int_0^t\int_{B_{2r}}|v|^2 \;dxds\right)^\frac{1}{2}\norm{F}{L^6(B_{3}\times(0,T_0))}rt^\frac{1}{3} \leq C |I_{lin}|^\frac{1}{2}\varepsilon_0r_1^\frac{3}{2}T_0^\frac{1}{3} \leq \frac{\varepsilon_0}{100}. \label{widerforcedestim}
    \end{align}
    Combining (\ref{KMTnonlinMestim})-(\ref{widerforcedestim}) then gives:  \begin{equation}
        \mathcal{E}_{r_0,r_1}(t) \leq \frac{21\varepsilon_0}{5}.
    \end{equation}
    \underline{\textbf{Step 4: combining cases and concluding:}}
    Combining Steps 3i and 3ii then yields the inequality
    \begin{equation}
        \mathcal{E}_{R,r_1}(t) \leq \frac{21\varepsilon_0}{5} + \frac{4}{25}\mathcal{E}_{R,r_1}(t) + \frac{C}{R^2}\int_0^t\mathcal{E}_{R,r_1}^3(s)+\mathcal{E}_{R,r_1}(s)\;ds.
    \end{equation}
    Thus, rearranging gives 
    \begin{equation}
        \mathcal{E}_{R,r_1}(t) \leq 5\varepsilon_0 + \frac{C}{R^2}\int_0^t\mathcal{E}^3_{R,r_1}(s)+\mathcal{E}_{R,r_1}(s)\;ds \qquad \text{for} \; 0<t\leq T_0.
    \end{equation}
    From here, we may use the same reasoning as \cite{KMT21}; using a continuation-type inequality (see \cite[Lemma 2.4]{KMT21}), one may then deduce that
    \begin{equation}
        \mathcal{E}_{r,r_1}(t) \leq 10\varepsilon_0 \quad \text{ for }\; t\in [0,\min(T_0,\lambda r^2)],\; \text{where}\; r\in [R,r_1]\;\; \text{and} \;\lambda:= \frac{c'}{1+100\varepsilon_0^2}.\label{smallnessofLocEn}
    \end{equation}
    where $c'$ is a sufficiently small universal constant so that $\lambda<1.$
    Note that $r_1:=c_1M^{-\frac{3}{2}}$, so setting $r:=r_2:=\sqrt\frac{t}{\lambda}$ is admissible\footnote{As we set $T_0=c_0M^{-18}$ and $r_1=c_1M^{-\frac{3}{2}}$, we have $ \min\left(r_1, \sqrt{\frac{t}{\lambda}}\right) =  \sqrt{\frac{t}{\lambda}} $ for any $0<t<T_0$, in comparison with \cite{KMT21}.} in (\ref{smallnessofLocEn}) for any $0<t<T_0$. From this, Lebesgue interpolation on $v$ (with the Sobolev embedding theorem), and using $\varepsilon_0\in(0,1)$, we have
    \begin{align}
        \frac{1}{r_2^2} \int_0^{\lambda r_2^2}\int_{B_{r_2}} |v|^3+|p|^\frac{3}{2}\;dxds \lesssim \varepsilon_0 
    \end{align}
   In particular, we see that,
   \begin{align}
       \frac{1}{t} \int_0^{t}\int_{B_{\sqrt{t}}} |v|^3+|p|^\frac{3}{2}\;dxds+ t^{\frac{13}{2}}\int_{Q_{\sqrt{t}}(0,t)}|F|^6\;dxds\lesssim \frac{\varepsilon_0}{\lambda} + \frac{\varepsilon_{0}}{2} \lesssim\varepsilon_0. \label{KMTCKN}
   \end{align}
    Ensuring that $\varepsilon_0$ is small enough, we may apply Corollary \ref{CKN higher order derivatives for forcing} for $t\in (0,T_0)$, (over the parabolic cylinder $(0,t)\times B_{\sqrt{t}}(0)$): 
    \begin{align}
        |\nabla^j v(0,t)|\lesssim \frac{1}{t^{-\frac{j+1}{2}}} \quad \text{for}\; j=0,1.
    \end{align}
    This is as required, in light of the rescaling and translation at the beginning of the proof.
\end{proof}

\subsection{Backwards propagation of vorticity concentration}
\begin{Notation*}
    In this section, for $r>0$, we define 
    \begin{align}
        A_r &= \frac{1}{r}\norm{v}{L^\infty_t L^2_x(Q_r)}^2, & E_r&=\frac{1}{r}\norm{\nabla v}{L^2(Q_r)}^2, & C_r &=\frac{1}{r^2}\norm{v}{L^3(Q_r)}^3,
        \\ D_r&=\frac{1}{r^2}\norm{p}{L^\frac{3}{2}(Q_r)}^\frac{3}{2} ,&\mathcal{E}_r&= A_r+E_r+D_r.
    \end{align}
\end{Notation*}
Compare with Proposition 2 and Proposition 3 of \cite{Localizedblowup}. 
\begin{Proposition}\label{Backwards prop of vort conc}\label{vorticity doesn't cocentrate into quantative regularity}
    If $M$ is sufficiently large, the following holds true. Suppose that $(v,p)$ is a smooth solution to the Navier-Stokes equations with smooth forcing $F$ on $\R^3\times (-4,0)$ such that 
    \begin{align}
        \norm{v}{L^\infty_t L^3_x(\R^3\times (-4,0))} +\norm{F}{L^6_{t,x}(\R^3\times (-4,0))}&\leq M. \label{backpropassump}
    \end{align}
    Suppose that  $0<-t_1<-t_2<M^{-192}$ 
    such that \begin{equation}\label{timeswellsep1}
        -t_1 \leq M^{-200}(-t_2)
    \end{equation} and
    \begin{equation}
    \int\limits_{B(0,M^{96}(-t_2)^\frac{1}{2})}  |\omega(x,t_2)|^2 \;dx \leq \frac{M^{-88}}{(-t_2)^\frac{1}{2}}. \label{vorticityconc1}
    \end{equation}
    Then the above assumptions imply that
    \begin{equation} \label{vortconcinitial}
    \int\limits_{B(0,M^{96}(-t_1)^\frac{1}{2})}  |\omega(x,t_1)|^2 \;dx \leq \frac{M^{-88}}{(-t_1)^\frac{1}{2}},
    \end{equation}
    and for $j=0,1,$
    \begin{align}
        |\nabla^jv(x,s)| \lesssim  (-t_2)^{-\frac{j+1}{2}} \qquad \text{for} \qquad (x,s) \in B(M^{92}(-t_2)^\frac{1}{2}) \times \left(\frac{t_2}{2}, 0\right]. \label{smoothingunderprop}
    \end{align}
\end{Proposition}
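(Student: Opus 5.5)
The plan is to follow the scheme of Propositions 2 and 3 of \cite{Localizedblowup}: turn the smallness of vorticity at time $t_2$ into smallness of the local energy at scale $(-t_2)^{1/2}$, then apply Proposition \ref{KMTwithforce} on many balls, and finally read off (\ref{vortconcinitial}) from the pointwise bounds.

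\textbf{Step 1: rescaling and initial local energy.} Rescale via (\ref{NSrescale}) with $\lambda=(-t_2)^{1/2}$ so that $t_2=-1$. Then $\norm{v}{L^\infty_tL^3_x}$ is unchanged, while $\norm{F}{L^6_{t,x}}$ scales like $\lambda^{3-5/6}=\lambda^{13/6}$. Since $-t_2<M^{-192}$, the forcing becomes extremely small, far below any negative power of $M$ that we will need. The hypothesis (\ref{vorticityconc1}) becomes $\int_{B(M^{96})}|\omega(x,-1)|^2\,dx\le M^{-88}$.

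Fix a point $x_0$ with $|x_0|\le M^{92}$. We want $v(\cdot,-1)$ to be controlled in $L^2(B_4(x_0))$ and in $L^6(B_3(x_0))$. To do this, decompose $v(\cdot,-1)=\nabla\times(-\Delta)^{-1}(\chi\omega)+h$, where $\chi$ is a cutoff to $B(M^{96}/2)$. On $B(M^{95})$ the remainder $h$ is harmonic, and its $L^3$ norm is controlled by $M$ via Calder\'on--Zygmund. Mean-value estimates then give $|h|\lesssim M\cdot M^{-95}$ on $B(M^{94})$. The Biot--Savart part is bounded in $L^6$ by $\norm{\omega}{L^2}\le M^{-44}$. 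Together these give
\[
\norm{v(\cdot,-1)}{L^6(B_3(x_0))}\lesssim M^{-44}.
\]

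\textbf{Step 2: energy and pressure bounds.} For the energy and pressure bounds on $B_4(x_0)\times(-1,-1+T)$, use the $L^\infty_tL^3_x$ bound to get $\norm{v}{L^\infty_tL^2_x(B_4)}^2\lesssim M^2$. Write the pressure as $p=\mathcal{R}_i\mathcal{R}_j(v_iv_j)+\Delta^{-1}\operatorname{div}F$, suitably normalised, which gives $\norm{p}{L^{3/2}}\lesssim M^2$. The $L^2_t\dot H^1_x$ bound comes from the local energy inequality, giving $\lesssim M^3$ over unit time. Proposition \ref{KMTwithforce} then applies with ``$M$'' $\sim M^3$ and $N\sim 1$, because both $\norm{F}{L^6}$ and $\norm{v(\cdot,-1)}{L^6(B_3)}$ are small. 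The admissible time is $T_1\sim M^{-54}$.

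\textbf{Step 3: covering the remaining time interval.} The time $M^{-54}$ is much shorter than the interval $(-\tfrac12,0]$ we need, so the argument must be continued. Here the smallness from Step 1 is used. Because $\norm{v(\cdot,-1)}{L^3(B_3(x_0))}\lesssim M^{-44}$ and $F$ is tiny, the CKN-type quantity in Corollary \ref{CKN higher order derivatives for forcing} remains below $\varepsilon_*$ at all scales up to unit size on $B(M^{93})$. This is the $\varepsilon$-regularity form of local-in-space smoothing, run with $\varepsilon_0$ small and not only with $N$.

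To prove this, rerun the continuation argument (\ref{smallnessofLocEn}) with the initial smallness $\varepsilon_0\sim M^{-44}$ rather than $O(1)$. Exploit the fact that the $M$-dependent terms $I_{lin},I_{nonlin},I_{pr}$ are only large at scale $r_1\sim M^{-3/2}$. Cover times up to $0$ by bootstrapping: once $|v|\lesssim 1$ on $B(M^{93})\times(-1+T_1,s)$, the local energy at unit scale stays $\lesssim M^{-c}$, and Corollary \ref{CKN higher order derivatives for forcing} propagates the bound forward. The result is $|\nabla^jv|\lesssim 1$ on $B(M^{92})\times(-\tfrac12,0]$, which is (\ref{smoothingunderprop}) after undoing the rescaling. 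I expect this to be the main obstacle: keeping $\varepsilon$-smallness uniform over the full unit time interval, rather than only for the short time $M^{-54}$. This is exactly where the $M$-powers $92,96$ and $-88$ have to be balanced.

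\textbf{Step 4: vorticity bound at $t_1$.} In rescaled variables $s_1:=-t_1\le M^{-200}$. The set $B(M^{96}s_1^{1/2})$ is contained in $B(M^{-4})\subset B(M^{92})$, so the smoothing bound applies there. Choose to work with the refined bound $|\omega(\cdot,t_1)|\lesssim M^{-c}$ coming from the $\varepsilon^{1/6}$ gain in Corollary \ref{CKN higher order derivatives for forcing}. This gives
\[
\int_{B(M^{96}s_1^{1/2})}|\omega(x,t_1)|^2\,dx\lesssim M^{-2c}M^{288}s_1^{3/2}.
\]
This is at most $M^{-88}s_1^{-1/2}$ as soon as $s_1^2\le M^{-376-c'}$, which holds because $s_1\le M^{-200}$. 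Undoing the scaling then gives (\ref{vortconcinitial}).
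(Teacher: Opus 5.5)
Your Steps 1, 2 and 4 are sound. Step 1 even gives a sharper data bound than the paper needs, and in Step 4 the crude bound $|\nabla v|\lesssim 1$ already suffices, since $M^{288}s_1^2\le M^{-112}$. The gap is Step 3, which is where all the work lies, and the mechanism you describe does not work.

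\textbf{Rerunning the continuation argument at unit scale.} Rerunning the proof of Proposition \ref{KMTwithforce} on $B_4(x_0)$ with $\varepsilon_0\sim M^{-44}$ shortens the admissible time rather than lengthening it. In that proof the scales $r_0\le r\le r_1$, and the pressure at $\varrho=1$, are controlled only through the crude $M$-bounds.
\begin{itemize}
\item The harmonic pressure contributes $C rM^{3/2}$, which forces $r_1\lesssim\varepsilon_0 M^{-3/2}$.
\item The terms $I_{lin}$ and $I_{nonlin}$ at scale $r_0$ then force $T_0$ to shrink by a further power of $\varepsilon_0$.
\item (\ref{smallnessofLocEn}) only propagates smallness at scale $r\le r_1$ for times $\lesssim r^2$.
\end{itemize}
So $r_1\sim M^{-3/2}$ is not a scale where those terms happen to be large. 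It is the largest scale at which they can be made small at all, and only for a short time.

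\textbf{The bootstrap.} The bootstrap cannot start. At time $-1+T_1$ with $T_1\sim M^{-54}$, Proposition \ref{KMTwithforce} only gives $|v|\lesssim (t+1)^{-1/2}\sim M^{27}$, not $|v|\lesssim 1$. Moreover, even $|v|\lesssim 1$ on $B(M^{93})$ would only make the quantity in Corollary \ref{CKN higher order derivatives for forcing} at unit scale $O(1)$, not $\le\varepsilon_*$. Nothing in the proposal carries the estimate from time $M^{-54}$ to time $1$.

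\textbf{The missing idea.} No continuation in time is needed. The admissible time in Proposition \ref{KMTwithforce} is measured in units of the square of the spatial scale, so you should choose the scale rather than iterate in time.

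The paper rescales by $\lambda=M^{92}(-t_2)^{1/2}$. Then $B(M^{92}(-t_2)^{1/2})$ becomes $B_1$ and the whole interval $(t_2,0)$ becomes $(0,M^{-184})$. Two inputs then allow Proposition \ref{KMTwithforce} with $M\to M^4$ and $N\to CM^2$:
\begin{itemize}
\item scale-invariant energy and pressure bounds $\lesssim M^3$ at all scales $\le 1$, obtained from the global hypotheses via a pressure iteration;
\item $\norm{v_\lambda(\cdot,0)}{L^6(B_3)}\lesssim M^2$, from (\ref{vorticityconc1}) via Biot--Savart.
\end{itemize}
The resulting admissible time $\sim M^{-176}$ exceeds $M^{-184}$, so the whole interval is covered at once.

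Your own Step 1 would also suffice if you applied Proposition \ref{KMTwithforce} on balls of radius $\rho=M^{30}$ in your variables:
\begin{itemize}
\item the rescaled data satisfy $\norm{\tilde v_0}{L^6(B_3)}\lesssim \rho^{1/2}M^{-44}+\rho M^{-94}\lesssim M^{-29}$;
\item the rescaled energy and pressure are still $\lesssim M^3$ by the scale-invariant bounds;
\item the rescaled forcing is tiny.
\end{itemize}
The interval $(-1,0)$ then has rescaled length $\rho^{-2}=M^{-60}$, below the admissible $\sim M^{-54}$. This yields $|\nabla^j v|\lesssim (t+1)^{-(j+1)/2}$ on $B(M^{92})\times(-1,0]$ in one step.
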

\begin{proof}
   The proof closely follows \cite{Localizedblowup}, incorporating forcing by utilising Proposition \ref{KMTwithforce}. As in \cite[Lemma 2]{Localizedblowup}, we first show that \begin{align} \sup_{0<r\leq1}\mathcal{E}_r\lesssim M^3. \end{align}
    By \eqref{backpropassump} and the local energy inequality \eqref{LocalEnergyIneq}, it's enough to show that
    \begin{align} \sup_{0<r\leq2}D_r  \lesssim M^3. \label{locpressreq}\end{align}
    Indeed, by \cite[Lemma 3.1]{Seregin01}, with $\gamma=-\frac{1}{2}$, we have that for $0<\theta<1$ and $0<r\leq 2$,
    \begin{align}
        D_{\theta r}\lesssim\theta D_r+\theta^{-2}(C_r+\norm{F}{L^2(Q_2)}^\frac{3}{2})\lesssim \theta D_r+\theta^{-2}M^3. \label{Press Iterator}
    \end{align}
    By decomposing the pressure  $p_{v\otimes v} := \mathcal{R}_i\mathcal{R}_j(v_iv_j)$, $p_F := (\Delta)^{-1}\operatorname{div}F$, we may estimate $D_2$ by using \eqref{backpropassump}, H\"older, Sobolev and Calderón-Zygmund inequalities: \begin{align}
        D_2\lesssim \norm{p_{v\otimes v}}{L^\frac{3}{2}(Q_2)}^\frac{3}{2}+\norm{ p_F}{L^6(Q_2)}^\frac{3}{2}\lesssim \norm{p_{v\otimes v}}{L^\frac{3}{2}(\R^3\times(-4,0))}^\frac{3}{2}+\norm{ \nabla p_F}{L^2(\R^3\times(-4,0))}^\frac{3}{2}\lesssim  M^3.
    \end{align}
   Then a standard iteration of (\ref{Press Iterator}) gives line (\ref{locpressreq}) as required. Just as in \cite[Lemma 2]{Localizedblowup}, this implies that 
   \begin{equation}
        \sup_{0<r\leq 1}\left[ \frac{1}{r}\norm{v}{L^\infty_tL^2_x\cap L^2_t\dot{H}^1_x(Q_r)}^2 + \frac{1}{r^\frac{4}{3}}\norm{p}{L^\frac{3}{2}(Q_r)}  \right] +  \norm{F}{L^6(Q_1)} \lesssim  M^3. \label{morreyassumpt1}
    \end{equation}
   \\ We now show that line (\ref{morreyassumpt1}) along with our assumptions on the vorticity allow us to apply the local-in-space smoothing Proposition \ref{KMTwithforce}. 
    Observe that (\ref{morreyassumpt1}) implies that, for $M$ large enough ($``r=4M^{92}(-t_2)^\frac{1}{2} \leq 4M^{92}M^{-96} \leq 1"$)
    \begin{equation}
         \frac{1}{4M^{92}(-t_2)^\frac{1}{2}}\norm{v}{L^\infty_tL^2_x\cap L^2_t\dot{H}^1_x(Q(0,4M^{92}(-t_2)^\frac{1}{2})}^2 + \frac{1}{\left(4M^{92}(-t_2)^\frac{1}{2}\right)^\frac{4}{3}}\norm{p}{L^\frac{3}{2}(Q(0,4M^{92}(-t_2)^\frac{1}{2}))}  \lesssim M^3 .\label{morreyassumpt1a}
    \end{equation}
    Now let $\lambda:= M^{92}(-t_2)^\frac{1}{2} \leq \frac{1}{4}.$ 
    We consider the rescaled and translated functions which are a suitable weak solution to the forced Navier stokes equations on $B_{\lambda^{-1}}\times (0,M^{-184})$
    \begin{align}
        (v_\lambda(x,t),  p_\lambda(x,t), F_\lambda(x,t)) &:= (\lambda v(\lambda x,\lambda^2t+t_2),  \lambda^2p(\lambda x,\lambda^2t+t_2), \lambda^3F(\lambda x,\lambda^2t+t_2)).
    \end{align}
    Observe that $B_{\lambda^{-1}}\times (0,M^{-184})\supseteq B_4 \times (0,M^{-184}).$
    Then for $M$ sufficiently large, (\ref{morreyassumpt1a}) becomes 
    \begin{align}
        \norm{v_\lambda}{L^\infty_tL^2_x\cap L^2_t\dot{H}^1_x(B_4\times (0,M^{-184}))}^2 + \norm{p_\lambda}{L^\frac{3}{2}(B_4\times (0,M^{-184}))}  \lesssim M^3, \label{morreyassumpt1b}
    \end{align}
    and (\ref{vorticityconc1}) becomes
    \begin{align}
        \int\limits_{B(0,\frac{7}{2})}|\omega_\lambda(y,0)|^2\;dy \leq M^{92}(-t_2)^\frac{1}{2}\int\limits_{B(0,M^{96}(-t_2)^\frac{1}{2})}  |\omega(x,t_2)|^2 \;dx  \leq M^{4}.\label{recaledvortconineq1}
    \end{align}
    We also see that by (\ref{backpropassump}) and $0<-t_2<M^{-192}$
    \begin{align}
        \norm{F_\lambda}{L^6(B_4\times (0,M^{-184}))} \leq \left(M^{92}(-t_2)^\frac{1}{2}\right)^\frac{13}{6}\norm{F}{L^6(Q_{1})}
         \leq M^{2}.    
    \end{align}
    From here, the proof proceeds as in \cite[Lemma 2]{Localizedblowup}. We utilise the the Sobolev embedding $H^1\hookrightarrow L^6$, the Biot-Savart law ($-\Delta v_{0,\lambda} = \nabla\times\omega_{0,\lambda}$), elliptic regularity theory and \eqref{morreyassumpt1b}-(\ref{recaledvortconineq1}) to find:
    \begin{align}
        \norm{v_{0,\lambda}}{L^6(B_3)}&\lesssim \norm{\nabla v_{0,\lambda}}{L^2(B_3)}+\norm{v_{0,\lambda}}{L^2(B_3)} \lesssim \norm{\omega_{0,\lambda}}{L^2(B_\frac{7}{2})}+\norm{v_{0,\lambda}}{L^2(B_\frac{7}{2})}  
        \lesssim M^2 .
    \end{align}
    We may then apply Proposition \ref{KMTwithforce} (with $M$ replaced with $M^4$ and $N$ replaced with $C_{univ}M^2$) to find that for $M$ sufficiently large,
    \begin{equation}
       |\nabla^jv_\lambda(x,t)| \lesssim t^{-\frac{j+1}{2}} \qquad \text{for} \; (x,t)\in B_1\times (0,\,M^{-184}) , \quad j =0,1.  
    \end{equation}
    Hence, undoing the previous rescaling gives:
    \begin{equation}
       |\nabla^jv(x,t)| \lesssim (t-t_2)^{-\frac{j+1}{2}} \qquad \text{for} \; (x,t)\in B(0,M^{92}(-t_2)^\frac{1}{2})\times (t_2,\,0] , \quad j =0,1.  
    \end{equation}
    In particular,
    \begin{equation}
       |\nabla^jv(x,t)| \lesssim (-t_2)^{-\frac{j+1}{2}} \qquad \text{for} \; (x,t)\in B(0,M^{92}(-t_2)^\frac{1}{2})\times \left(\frac{t_2}{2},\,0\right] , \quad j =0,1.  
    \end{equation}
    Now, by (\ref{timeswellsep1}), observe that 
    \begin{align}
        B(0,M^{96}(-t_1)^\frac{1}{2})\times\lbrace t_1\rbrace \subset B(0,M^{92}(-t_2)^\frac{1}{2})\times \left(\frac{t_2}{2},\,0\right],
    \end{align}
    hence, 
    \begin{equation} 
    \int\limits_{B(0,M^{96}(-t_1)^\frac{1}{2})}  |\omega(x,t_1)|^2 \;dx \lesssim \frac{\left(M^{96}(-t_1)^\frac{1}{2}\right)^3}{(-t_2)^2} = \frac{M^{288}}{(-t_1)^\frac{1}{2}}\left(\frac{-t_1}{-t_2}\right)^2 \leq \frac{M^{-112}}{(-t_1)^\frac{1}{2}} \leq \frac{M^{-88}}{(-t_1)^\frac{1}{2}} .
    \end{equation}
\end{proof}

\subsection{Epoch of Regularity}
\begin{Proposition}[Epoch of regularity] \label{Epoch of regularity}
     Let $0<T_1\leq 1$ and define $I := (-T_1,-\frac{T_1}{2})$. Suppose that $(v,p)$ is a smooth Leray-Hopf weak solution to the Navier-Stokes equations with smooth forcing $F$ on $\R^3\times (-T_1,0)$ such that 
\begin{align}
\norm{v}{L^\infty_t L^3_x(\R^3\times (-T_1,0))} &\leq M,
    &
    \norm{F}{L^2_{t,x}\cap L^6_{t,x}(\R^3\times (-T_1,0))} &\leq \frac{M^{-1}}{T_1^\frac{1}{4}} . \label{epochassumpt}
\end{align} Then there exists a positive universal constant $M_0\geq 1$ such that for all $M\geq M_0$, there exists an ``epoch of regularity", a closed interval $I' \subset I$ such that
\begin{equation}
    |I'| = M^{-28}|I|,
\end{equation}
and for $k=0,1,$
\begin{equation}
    \norm{\nabla^{(k)}v}{L^\infty(\R^3\times I')} \lesssim \frac{1}{M}|I'|^{-\frac{k+1}{2}}.
\end{equation}
Moreover, for $R\geq T_1^\frac{1}{2}$, we have that 
\begin{align}          
    \sup_{x_0\in\R^3}\int_{I'}\int_{B_R(x_0)} \frac{|\omega|^2}{T_1}+ |\nabla\omega|^2\;dxds \lesssim M^{19}RT_1^{-1} .\label{EpochCacciop}
\end{align}

\end{Proposition}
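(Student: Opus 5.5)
By scaling (using \eqref{NSrescale}) I will normalise to $T_1=1$. The hypothesis \eqref{epochassumpt} is designed exactly so that, after rescaling, $\norm{F}{L^2\cap L^6}\le M^{-1}$. The plan then follows Tao's epoch-of-regularity argument \cite{Tao21}, in the physical-space form of \cite{Spatialconc}, with the forcing carried along.

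\textbf{Step 1: an a priori $L^4$-type energy bound.} I split $v=e^{(t+1)\Delta}v(-1)+w$ on $I=(-1,-\tfrac12)$. The linear part is bounded in $L^\infty$ by $M$ for $t\ge -\tfrac34$, by the heat semigroup smoothing applied to $L^3$ data. For $w$, I run an energy estimate that gives
\begin{align}
\int_{-3/4}^{-1/2}\int_{\R^3} |\nabla w|^2 + |w|^4 \,dx\,dt \lesssim M^{C}.
\end{align}
This uses the $L^3$ bound on $v$, the heat-flow bounds, and the forcing bound $\norm{F}{L^2}\le M^{-1}$. The forcing contributes only a term $\int F\cdot w\lesssim \norm{F}{L^2}\norm{w}{L^2}$, and $\norm{w}{L^2}$ is controlled by the same energy via the $L^3$ bound on local pieces; alternatively I absorb it by Gronwall.

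\textbf{Step 2: pigeonholing to find a good subinterval.} From Step 1, the total $L^4_{t,x}$ (or $L^5$) mass of $v$ on $I$ is $\lesssim M^{C}$. Subdividing $I$ into $M^{28}$ subintervals of length $M^{-28}|I|$, I find one, say $I''$, on which the relevant quantities are small. Concretely, the scaled quantity $|I''|^{-1/4}\norm{v}{L^4_tL^4_x(\R^3\times I'')}$, or the scale-invariant $L^5_{t,x}$ norm, is at most $M^{-10}$; this is where the exponent $28$ is fixed. On $I''$ I then apply the $\varepsilon$-regularity of Corollary \ref{CKN higher order derivatives for forcing}. Its hypothesis \eqref{CKN forced assump} needs smallness of three pieces on every cylinder of radius $|I''|^{1/2}$:
\begin{itemize}
\item[(i)] the $v$ piece, which is small by the pigeonholed smallness;
\item[(ii)] the pressure piece, which follows from Calderón–Zygmund bounds on $\mathcal R_i\mathcal R_j(v_iv_j)$ plus the forcing pressure $(\Delta)^{-1}\operatorname{div}F$;
\item[(iii)] the forcing term $R^{13}\int|F|^6$, which is tiny since $R^2=|I''|\le 1$ and $\norm{F}{L^6}\le M^{-1}$.
\end{itemize}
Taking $I'$ to be the final half of $I''$ (renormalising lengths), I obtain $\norm{\nabla^k v}{L^\infty(\R^3\times I')}\lesssim M^{-1}|I'|^{-(k+1)/2}$ for $k=0,1$.

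\textbf{Step 3: the Caccioppoli bound \eqref{EpochCacciop}.} I use the vorticity equation $\partial_t\omega-\Delta\omega=\operatorname{div}(\omega\otimes v-v\otimes\omega)+\nabla\times F$. Write $G:=\omega\otimes v-v\otimes\omega$ and $H:=\nabla\times F$, and apply Proposition \ref{CaccioppoliBall} on $B_R(x_0)\times I'$ inside $B_{2R}(x_0)\times \tilde I$, with $\tilde I$ a slightly larger interval (take the Step 2 subinterval to have the same length and shrink). This gives
\begin{align}
\int_{I'}\int_{B_R}|\nabla\omega|^2 \lesssim \Big(|I'|^{-1}+R^{-2}\Big)\int\int|\omega|^2 + \norm{G}{L^2}^2 + \norm{H}{L^2}\norm{\omega}{L^2}.
\end{align}
With $|\omega|\lesssim M^{-1}|I'|^{-1}$, $|I'|=M^{-28}/2$ and $R\ge1$, I have $\int_{I'}\int_{B_{2R}}|\omega|^2\lesssim R^3 M^{54}|I'|$. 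That is too large: it carries $R^3$, not $R$. So instead I will use the $L^2_{t,x}$ bound on $\nabla v$ coming from Step 1, or from the global energy, localised: $\int_{I'}\int_{B_{2R}}|\nabla v|^2\lesssim R\, M^{C}$ holds by the local energy/Morrey bound (as in \eqref{morreyassumpt1}, which gives $\lesssim M^3 R$). Combining this with the pointwise bounds gives a factor $M^{28}\cdot M^3\cdot R$-type powers, and the $G$ term is controlled by $\norm{v}{L^\infty}^2\norm{\omega}{L^2}^2\lesssim M^{-2}M^{28}\cdot M^3R$. The $H$ term is bounded by $\norm{\nabla F}{L^2}$, or treated in divergence form as $\operatorname{div}$ of $F$, giving $\norm{F}{L^2}^2\le M^{-2}$. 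Tracking the exponents should yield $M^{19}R$ after choosing the time enlargement carefully; this exponent bookkeeping is a secondary concern.

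\textbf{Main obstacle.} I expect the main difficulty to be Step 1–2: obtaining the global smallness pigeonhole with forcing, and in particular controlling the pressure term and $\int F\cdot w$ without any $L^2$ bound on $v$ itself, using only $\norm{v}{L^3}\le M$.
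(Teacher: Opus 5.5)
\textbf{Verdict: gap in the quantitative part.} Your architecture matches the paper's: rescale to $T_1=1$, split off a linear part, do an energy estimate for the Duhamel part, pigeonhole in time and apply the forced CKN Corollary, then run Caccioppoli on the vorticity equation using a local $L^2$ bound on $\omega$ that is linear in $R$. Your Step 1 also closes. The cross term $\int w_iL_j\partial_iw_j$ is bounded by $\|w\|_{L^3}\|L\|_{L^6}\|\nabla w\|_{L^2}$, with $\|w\|_{L^3}\le 2M$ and $M^2\|L(t)\|_{L^6}^2\lesssim M^4(t+1)^{-1/2}$, which is integrable at $t=-1$. The paper instead first obtains $\sup_t\|W\|_{L^2}\lesssim M^2$ from the Oseen kernel and puts the forcing into the linear part.

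The problem is the quantitative content, which you defer but which is exactly what the proposition asserts. Because you look for smallness directly at the final scale, the only interval on which you control $\|v\|_{L^\infty}$ has length of order $M^{-28}$. The Caccioppoli enlargement must stay inside that interval, since you need $\|v\|_{L^\infty}$ there to bound $\omega\otimes v-v\otimes\omega$ in $L^2$. So the prefactor of $\iint_{B_{2R}}|\omega|^2$ is at least of order $M^{28}$. With only the global bound on $\iint|\nabla w|^2$, the estimate you get is of order $M^{28+C}$ already at $R=1$; your own count gives $M^{31}R$. No choice of enlargement reduces this to $M^{19}R$.

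The pigeonhole step has a similar issue. Splitting into $M^{28}$ pieces gives only $|I''|^{-1/2}\int_{I''}\!\int|v|^4\lesssim M^{C-14}$, not smallness of size $M^{-10}$. Since the Corollary as stated returns $\varepsilon^{1/6}$, the factor $M^{-1}$ requires $\varepsilon\lesssim M^{-6}$, that is $C\le 6$ in Step 1. So whether $28$ suffices depends on constants you have not fixed.

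Two smaller points. The hypotheses give no control of $\nabla F$, so $\nabla\times F$ must be written in divergence form. Also, \eqref{morreyassumpt1} concerns $r\le 1$ under different hypotheses, so the $R$-linear bound has to come from Step 1, via $\int_{B_{2R}}|\nabla L|^2\lesssim R\|\nabla L(t)\|_{L^3}^2\lesssim RM^2$.

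\textbf{The missing idea: the paper's two-scale argument.}
\begin{itemize}
\item The pigeonhole only needs to bring the CKN quantity below the universal threshold at scale $r=M^{-13/2}$. This gives $|\nabla^k v|\lesssim M^{13(k+1)/2}$ on an interval $J$ of length $\tfrac12M^{-13}$.
\item One then takes $I'$ of length $\tfrac12M^{-28}$ inside the last half $J'$ of $J$. The factor $M^{-1}$ comes for free, since $M^{13/2}\lesssim M^{-1}|I'|^{-1/2}$ and $M^{13}\lesssim M^{-1}|I'|^{-1}$.
\item The Caccioppoli step from $J$ to $J'$ costs only $M^{13}$, from both the time cutoff and $\|v\|_{L^\infty(J)}^2$. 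Combined with $\int_J\!\int_{B_{2R}}|\omega|^2\lesssim RM^2+M^6$, this gives $M^{13}\cdot M^6R=M^{19}R$.
\end{itemize}

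Alternatively, you can stay with your single-scale scheme if you pigeonhole $\iint|\nabla w|^2$ jointly with the $L^4$ quantity. You would also use the shortness of the interval in $\int_{\tilde I}\!\int_{B_{2R}}|\nabla L|^2\lesssim|\tilde I|RM^2$. Then the $M^{28}$ factors cancel and you get a polynomial bound of the required form. Downstream, only some polynomial power is needed, because it is absorbed by a Gaussian factor in the Carleman step. As written, however, your argument does not prove the stated bound.
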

\begin{Remark}\label{Epoch Remark}
   Compare with \cite[Proposition 5]{Localizedblowup}. Due to the regularity of the forcing in the quantitative assumption \eqref{epochassumpt}, one would be unable to obtain a bound on $\norm{\nabla\omega}{L^\infty(\R^3\times I')}$ as in \cite[Proposition 5]{Localizedblowup}. However, for the purposes of estimating terms appearing in the application of quantitative Carleman inequalities in Section \ref{MainProof}, we find that the bound \eqref{EpochCacciop} is sufficient. 
\end{Remark}
\begin{proof}
    The proof is directly analogous to \cite[Proposition 5]{Localizedblowup}, but we prove lower regularity estimates on the gradient of the vorticity \eqref{EpochCacciop}. We provide the estimates required to obtain this, referencing the original proof where details are similar. 
    \\By a rescaling argument, replacing $v$ with $v_\lambda(x,t):=\lambda v(\lambda x,\lambda^2 t)$ with $\lambda:= T_1^\frac{1}{2},$ (with corresponding pressure and forcing rescaled as in (\ref{NSrescale})) we may instead assume that $I=(-1,-\frac{1}{2})$. Just as in \cite[Proposition 5]{Localizedblowup}, since $v$ is a smooth suitable weak Leray-Hopf solution on $\R^3\times [-1,0)$ , we can decompose $v(x,t) = L_F(x,t) + W(x,t)$ where 
    \begin{align*}
        L_F(x,t):= e^{(t+1)\Delta}v(x,-1) + \int_{-1}^te^{(t-s)\Delta}\LP F(x,s)\;ds,\qquad
    W(x,t):= -\int_{-1}^te^{(t-s)\Delta}\LP \nabla\cdot(v\otimes v )(x,s)\;ds.
\end{align*}
As in \cite[Proposition 5]{Localizedblowup}, using standard heat semigroup estimates gives that for $3\leq q \leq \infty$ and $t\in[-\frac{1}{2},0)$:
\begin{align}
    \norm{L_F(\cdot,t)}{L^q(\R^3)} &\lesssim_{q} \frac{\norm{v(\cdot,-1)}{L^3(\R^3)}}{(t+1)^{\frac{1}{2}\left(\frac{1}{3}-\frac{1}{q}\right)}} + \int_{-1}^t\frac{1}{(t-s)^{\frac{1}{2}\left(\frac{1}{3}-\frac{1}{q}\right)}}\norm{F(\cdot,s)}{L^3(\R^3)}\;ds .
\end{align}
Then by \eqref{epochassumpt}, H\"older's inequality, and Lebesgue interpolation ($F\in L^2\cap L^6 \subset L^3$)
\begin{align} 
     \norm{L_F}{L^\infty_tL^q_x(\R^3\times[-\frac{1}{2},0))}&\lesssim_{q} M + \norm{\frac{1}{s^{\frac{1}{2}\left(\frac{1}{3}-\frac{1}{q}\right)}}}{L_s^\frac{3}{2}((0,1))}\norm{F}{L^3(\R^3\times [-1,0))} \lesssim_{q} M + \norm{F}{L^3(\R^3\times [-1,0))}  \lesssim_q M. \label{L_F key estim}
\end{align}
Next, we estimate $W$ in the energy space. Just as in \cite[Proposition 5]{Localizedblowup}, utilising \eqref{epochassumpt}, Young's convolution inequality and the Oseen kernel estimate\footnote{It is known (for example, from \cite[pp.234-235]{SolonnikovRussian}) that the operator $-e^{t\Delta}\LP\nabla\cdot$ is a convolution operator with the Oseen tensor $K$ as its kernel and satisfies the bound
\begin{equation}
\left\vert K(x,t)\right\vert \lesssim \frac{1}{\left( |x|^2+t\right)^{2}} \quad \text{for all} \;\; (x,t)\in \R^3\times (0,\infty).
\end{equation}
}, we have that:
\begin{align}
    \sup_{t\in[-1,0)}\norm{W(\cdot,t)}{L^2(\R^3)} \lesssim\int_{-2}^0 \frac{\norm{v(\cdot,s)}{L^3(\R^3)}^2}{(t-s)^\frac{3}{4}} \;ds \lesssim M^2.\label{Westimepoch2}
\end{align}

$W$ is smooth and divergence-free on $\R^3\times[-1,0)$ and satisfies the equation
\begin{equation}
    \partial_tW-\Delta W+W\cdot\nabla W +W\cdot\nabla L_F +L_F\cdot\nabla W +L_F\cdot\nabla L_F + \nabla\Pi = 0
\end{equation}
and $W(\cdot,-1)=0.$
Since $v$ is a smooth suitable weak Leray-Hopf solution, performing the standard energy estimate as in \cite[Proposition 5 ]{Localizedblowup}, using (\ref{L_F key estim}) with $q=\infty$ and $q=4$ and \eqref{Westimepoch2} gives that for $t\in [-\frac{1}{2},0)$: 
\begin{align}
     &\norm{W(\cdot,t)}{L^2(\R^3)}^2 + \norm{\nabla W}{L^2(\R^3\times (-\frac{1}{2},0))}^2 
     \lesssim M^6. \label{W energy estim epoch}
\end{align}
Now, let $x\in \R^3$, $r:=M^{-\frac{13}{2}}$, $t\in [a+\frac{3}{2}M^{-13}, a+2M^{-13}].$
Then observe that for $M$ sufficiently large, $(t-r^2,t) \subset (a,a+C_{univ}M^{-12})$. By exactly the same arguments as in \cite[Proposition 5 (138)-(149)]{Localizedblowup}, we have that  
\begin{equation}
    \frac{1}{r^2}\int_{t-r^2}^t\int_{B_r(x)} |v|^3+|p-(p)_{B_r(x)}|^\frac{3}{2} \;dyds \lesssim M^{-\frac{3}{4}}.
\end{equation}
Then, given our assumption on the forcing \eqref{epochassumpt}, it follows from Corollary \ref{CKN higher order derivatives for forcing} that for $k=0,1:$ 
\begin{equation}
    |\nabla^{(k)}v(x,t)| \lesssim r^{-(k+1)} =M^{\frac{13}{2}(k+1)} \label{nabla velo epoch}
\end{equation}
for $x\in \R^3$ and $t\in [a+\frac{3}{2}M^{-13}, a+2M^{-13}] =: [\alpha,\beta] =: J.$
\\Next, we prove the lower regularity estimates on the gradient of the vorticity \eqref{EpochCacciop}. Let $R\geq 1.$
By using \eqref{L_F key estim}, the parabolic Caccioppoli inequality (Proposition \ref{CaccioppoliBall}), Young's inequality and H\"older's inequality, we have
\begin{align}
    \int_{-\frac{1}{2}}^0\int_{B_{2R}}|\nabla L_F|^2 \;dxd\tau &\lesssim \left(1+\frac{1}{R^2}\right)\int_{-1}^0\int_{B_{3R}}|L_F|^2 \;dxd\tau + \norm{\mathbb{P}F}{L^2_{t,x}(B_{3R}\times[-1,0)])}\norm{L_F}{L^2_{t,x}(B_{3R}\times[-1,0)])}
    \\ &\lesssim \left(1+\frac{1}{R^2}\right)\int_{-1}^0\left(\int_{B_{3R}}|L_F|^3 \;dx\right)^\frac{2}{3}d\tau + \norm{\mathbb{P}F}{L^2_{t,x}(B_{3R}\times[-1,0)])}^2
    \\ &\lesssim \norm{L_F}{L^\infty_t L^3_x(\R^3\times[-1,0)])}^2\left(R+\frac{1}{R}\right)+ M^{-2}
    \lesssim RM^2. \label{nabla L_F key bound}
\end{align}
Using estimates (\ref{nabla L_F key bound}) and (\ref{W energy estim epoch}) gives
\begin{align}
    \norm{\omega}{L^2(B_{2R}\times J)}^2 &\lesssim \norm{\nabla L_F}{L^2(B_{2R}\times (-\frac{1}{2},0) )}^2  + \norm{\nabla W}{L^2( B_{2R}\times(-\frac{1}{2},0))}^2 \lesssim RM^2 + M^6. \label{nabla vorticity epoch}
\end{align}
Denote $J' = [a+\frac{7}{4}M^{-13} ,a+2M^{-13}]$. We use the parabolic Caccioppoli inequality (Proposition \ref{CaccioppoliBall}) to estimate the $L^2$ norm of $\nabla\omega$ here:
\begin{align}
    \int_{J'}\int_{B_{R}}|\nabla\omega|^2\;dxds &\lesssim \left(M^{13}+\frac{1}{R^2}\right)\int_{J}\int_{B_{2R}}|\omega|^2\;dxds + \norm{|v||\omega|}{L^2(B_{2R}\times J')}^2 +\norm{\omega\otimes v + F}{L^2(B_{2R}\times J')}^2
    \\ &\lesssim \left(M^{13}+\frac{1}{R^2}+ \norm{v}{L^\infty(B_{2R}\times J')}^2\right) \int_{J}\int_{B_{2R}} |\omega|^2\;dxds +\norm{F}{L^2(B_{2R}\times J')}^2
    \\ &\lesssim \left(M^{13}+\frac{1}{R^2}\right)\left(RM^2+M^6\right)+ 1 \lesssim RM^{19}.
\end{align}
In the last line, we used Young's inequality and $\norm{F}{L^2}\lesssim 1$. Finally, we note that we may take $I'\subset J'$ such that $|I'| = \frac{1}{2}M^{-28} = |I|M^{-28}.$ 
Then \eqref{nabla velo epoch} gives the following estimate for $k=0,1$:
\begin{equation}
    \norm{\nabla^{(k)}v}{L^\infty(\R^3\times I')} \leq M^{13} \lesssim \frac{1}{M}|I'|^{-\frac{k+1}{2}} .
\end{equation}
\end{proof}

\subsection{Annulus of Regularity}\label{Ann of Reg Section}
This is based on \cite[Proposition 7]{Localizedblowup} (also see \cite[Proposition 19]{Spatialconc}). Here, as well as incorporating forcing, we also provide the additional Caccioppoli-type estimate \eqref{smalltimeestim1} for $\omega$, which we will use to estimate certain terms of the quantitative Carleman inequalities applied in Section \ref{MainProof}.  
\begin{Proposition}[General version with forcing]  \label{ann of reg M version}
Let $0<T\leq 1$ and $R\geq T^\frac{1}{2}$, $\mu>0$ and suppose that $(v,p)$ is a suitable weak solution to the Navier-Stokes equations with forcing $F\in L^2_{t,x}\cap L^6_{t,x}(\R^3 \times [-T,0))$ on $\R^3 \times [-T,0) .$
There exists $M_0(\mu)>1$ such that for all $M\geq M_0(\mu)$ the following statement holds:
If
\begin{align}
    \norm{v}{L^\infty_t L^3_x(\R^3\times[-T,0)])}\leq M \quad\text{and}\quad\norm{F}{L^2_{t,x}\cap L^6_{t,x}(\R^3\times [-T,0))} \leq M^{-2\mu},
    \end{align}
then there exists $\hat{R}=\hat{R}(M,\mu,R) \in [2R,2R\exp{\left(C_{univ}\mu M^{3\mu+6}\right)}]$ such that if $A := \left\lbrace \hat{R} < |x| <\frac{M^{3\mu}\hat{R}}{4} \right\rbrace$, then for $k=0,1:$
\begin{align}
        \norm{\nabla^kv}{L^\infty\left(A\times (-\frac{T}{16},0)\right)} &\lesssim_k M^{-\frac{\mu}{2}}T^{-\frac{k+1}{2}}, &
    \norm{\omega}{C^{0,\frac{1}{6}}_\text{par}\left(A\times (-\frac{T}{16},0)\right)} &\lesssim M^{-\frac{\mu}{2}}T^{-\frac{13}{12}}.    
\end{align} 
Moreover, let $0<t<\frac{T}{16}$ and let $r\geq T^\frac{1}{2}$ be such $A_{(2)}:=B_r\setminus B_\frac{r}{2}\subset A_{(1)}:=B_{2r}\setminus B_\frac{r}{4}\subseteq A$. Then
 \begin{align}
        \int_{-t}^0\int_{A_{(2)}} \frac{|\omega|^2}{T}+|\nabla\omega|^2\;dxds &\lesssim M^{-\mu}\left(\frac{t}{T}\right)^\frac{1}{12}\left(\frac{r^2}{T}\right)^\frac{3}{2}T^{-\frac{1}{2}}+ T^{\frac{13}{6}}t^\frac{2}{3}r^2 \norm{F}{L^6(\R^3\times(-T,0))}^2. \label{smalltimeestim1}
    \end{align}
\end{Proposition}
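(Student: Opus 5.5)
By rescaling $v_\lambda(x,t)=\lambda v(\lambda x,\lambda^2 t)$ with $\lambda=T^{1/2}$, I reduce to $T=1$. Under this rescaling the critical $L^\infty_tL^3_x$ norm is invariant. The $L^2_{t,x}$ and $L^6_{t,x}$ norms of $F$ scale by nonnegative powers of $T\le 1$ (they are subcritical), so the smallness $M^{-2\mu}$ is preserved. The plan then follows \cite[Proposition 7]{Localizedblowup} and \cite[Proposition 19]{Spatialconc}, with forcing handled through the decomposition used in Proposition \ref{Epoch of regularity}: write $v=L_F+W$ on $\R^3\times(-1,0)$.

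\textbf{Step 1 (energy control).} Using the heat semigroup, \eqref{L_F key estim} gives $\norm{L_F}{L^\infty_tL^q_x}\lesssim M$ for $q\in[3,\infty]$, at least away from the initial time. The Oseen kernel bound and the energy estimate give $\norm{W}{L^\infty_tL^2_x\cap L^2_t\dot H^1_x}^2\lesssim M^6$, as in \eqref{W energy estim epoch}. The total amount $\int_{-1}^0\int_{\R^3}(|v|^{10/3}+|\nabla W|^2)$ is at most a power of $M$. Together with the $L^3$ bound, this bounds the sum over dyadic annuli of the local quantities $\int|v|^3$, $\int|p-\bar p|^{3/2}$ (via Calder\'on–Zygmund) and $\int |F|^6$, taken over $\{2^{j}\le|x|\le2^{j+1}\}\times(-1/8,0)$ with appropriate scale weights, by $M^{C}$.

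\textbf{Step 2 (pigeonhole).} I split $[2R,\infty)$ into consecutive blocks of $M^{3\mu}$ dyadic scales. Among roughly $\mu M^{3\mu+6}$ such blocks, one block $A=\{\hat R<|x|<M^{3\mu}\hat R/4\}$ must carry at most $M^{-6}$ times the total, which is the source of $\hat R\le 2R\exp(C\mu M^{3\mu+6})$. On this annulus every parabolic cylinder $Q_r(x,t)$ with $r\sim 1$, $x\in A$ and $t\in(-1/16,0)$ satisfies the hypothesis \eqref{CKN forced assump} of Corollary \ref{CKN higher order derivatives for forcing} with $\varepsilon\sim M^{-3\mu}$. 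The forcing term is admissible there since $\norm{F}{L^6}^6\le M^{-12\mu}$. The corollary then yields the $L^\infty$ bounds on $v$ and $\nabla v$ and the H\"older bound on $\omega$ with prefactor $\varepsilon^{1/6}\lesssim M^{-\mu/2}$.

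\textbf{Main obstacle.} The delicate point is that the pressure has a nonlocal part. I would decompose $p$ into a local Calder\'on–Zygmund part, a harmonic far-field part controlled by the global $L^{3/2}$ bound (decaying in $\hat R$), and a forcing part bounded as in Step 2 of Proposition \ref{KMTwithforce}. One must also check that the power counting yields exactly $M^{-\mu/2}$; I expect to adjust exponents there.

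\textbf{Step 3 (estimate \eqref{smalltimeestim1}).} I apply Remark \ref{CaccioppoliAnnulus} to the vorticity equation $\partial_t\omega-\Delta\omega=\operatorname{div}(\omega\otimes v-v\otimes\omega)+\nabla\times F$ on $I=[-t,0]$, with $A_{(2)}\subset A_{(1)}$.

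The time-boundary term is bounded by $t^{1/12}r^3\norm{\omega}{L^\infty}\norm{\omega}{C^{0,1/6}_{par}}\lesssim M^{-\mu}t^{1/12}r^3$, which is the main term.

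The term $r^{-2}\int|\omega|^2$ is at most $tr\,M^{-\mu}$, and $\norm{G}{L^2}^2\lesssim tr^3M^{-2\mu}$. Both are dominated by the main term because $t\le1\le r$.

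I write the curl forcing in divergence form, $\nabla\times F=\operatorname{div}(\tilde G)$ with $|\tilde G|\sim|F|$. H\"older gives $\norm{F}{L^2(A_{(1)}\times I)}^2\lesssim t^{2/3}r^2\norm{F}{L^6}^2$, producing the second term.

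Finally, $\int|\omega|^2$ over $A_{(2)}\times I$ is controlled by the same main term. Undoing the scaling restores the powers of $T$.
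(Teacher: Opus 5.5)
Your skeleton matches the paper's: rescale to $T=1$, pigeonhole over annuli of ratio $M^{3\mu}$, and apply Corollary~\ref{CKN higher order derivatives for forcing} on unit cylinders in the good annulus with $\varepsilon\sim M^{-3\mu}$, so $\varepsilon^{1/6}=M^{-\mu/2}$. You then finish with Remark~\ref{CaccioppoliAnnulus} for the vorticity, writing $\nabla\times F=\operatorname{div}(\epsilon_{ijk}F_k)$; your Step 3 is essentially the paper's computation term by term. However, the pressure treatment in your ``main obstacle'' paragraph fails as stated. The far-field part $\mathcal R_i\mathcal R_j((1-\psi)v_iv_j)$ need not be small on the chosen annulus. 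With only $\|v\|_{L^\infty_tL^3_x}\le M$, the velocity may carry $L^3$-mass of order $M$ inside $B_{R_{k_0}}$, at distance $O(1)$ from points of $A$ when $k_0=0$ and $R=1$. In general the oscillation of that harmonic part on a unit ball at distance $d$ is only $\lesssim M^2d^{-3}$. So the CKN smallness fails unless you first push the starting radius up to $M^{C}R$. The paper avoids this altogether, and your Step 1 half-contains the fix. Decompose globally $p=p_{v\otimes v}+p_F$ with $p_{v\otimes v}=\mathcal R_i\mathcal R_j(v_iv_j)$ and $p_F=\Delta^{-1}\operatorname{div}F$. Since $\|p_{v\otimes v}\|_{L^\infty_tL^{3/2}_x}\lesssim M^2$, you pigeonhole $|v|^3+|p_{v\otimes v}|^{3/2}$ \emph{together}, so no far field ever appears. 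The remainder $p_F-(p_F)_{B_1(x_0)}$ is bounded by Poincar\'e and Calder\'on--Zygmund by $\|F\|_{L^2}^{3/2}\le M^{-3\mu}$; this is where the $L^2$ hypothesis on $F$ enters.

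Your pigeonhole bookkeeping is also off, and this matters for the stated range of $\hat R$. Each block must span a ratio $M^{3\mu}$, which is about $3\mu\log_2M$ dyadic scales, not $M^{3\mu}$ of them. The pigeonholed total must be $\lesssim M^3$, which follows directly from the $L^\infty_tL^3_x$ bound and Calder\'on--Zygmund. Then among $\sim M^{3\mu+3}$ blocks one carries at most $M^{-3\mu}$, and $\hat R\le 2R\,M^{3\mu\cdot CM^{3\mu+3}}\le 2R\exp(C'\mu M^{3\mu+6})$ using $\log M\le M^3$. So the $+6$ in the exponent is $3$ from the total plus $3$ to absorb $\log M$. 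With your count, roughly $\mu M^{3\mu+6}$ blocks against a total that is only `a power of $M$' (for instance the $M^6$ from the $W$-energy), you get $\exp(C\mu^2M^{3\mu+6}\log M)$ or worse, which is outside the stated range. Step 1 should therefore be dropped: the $L_F+W$ splitting and the $L^{10/3}$ and $\nabla W$ bounds play no role, and would need justification for a merely suitable weak solution. Finally, undoing the scaling turns the forcing term into $t^{2/3}r^2\|F\|^2_{L^6(\R^3\times(-T,0))}$ with no power of $T$; this is the form used in \eqref{annofregL2gradvort}. To reach the printed $T^{13/6}$ version, note that at $T=1$ the hypothesis gives $t^{2/3}r^2\|F\|^2_{L^6}\le M^{-3\mu}\cdot M^{-\mu}t^{1/12}r^3$. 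That term can therefore simply be absorbed into the main one.
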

\begin{proof}
    Up to the Cacciopoli-type estimate \eqref{caciiobegin}, the proof is almost exactly the same as \cite[Proposition 7]{Localizedblowup}. By a rescaling argument, replacing $v$ with $\tilde{v}(x,t):=T^\frac{1}{2} v(T^\frac{1}{2} x,Tt)$ (with corresponding pressure, initial data, and forcing rescaled as in (\ref{NSrescale})), we may instead assume that $T=1$. Let us fix $R \geq 1$, $\mu>0$ and $M\geq M_0(\mu)>1$ where $M_0(\mu)$ is yet to be determined. First, let us decompose the pressure $p=p_{v\otimes v}+p_f$ where
    \begin{equation}
        p_{v\otimes v} := \mathcal{R}_i\mathcal{R}_j(v_iv_j), \qquad \qquad p_F := (\Delta)^{-1}\text{div}F.
    \end{equation}
    Similarly to \cite[Proposition 7]{Localizedblowup}, set $R_k := M^{3\mu k}R. $ Observe that by Calderón-Zygmund estimates, 
    \begin{align}    \sum\limits_{k=0}^{\infty}\int\limits_{-1}^0\int\limits_{R_k<|x|<R_{k+1}}|v|^3 + |p_{v\otimes v}|^\frac{3}{2} \;dxdt &\leq \norm{v}{L^\infty_tL^3(\R^3\times(-1,0))}^3+\norm{p_{v\otimes v}}{L^\infty_tL^\frac{3}{2}(\R^3\times(-1,0))}^\frac{3}{2}\lesssim 
    M^3.
    \end{align}
    By the pigeonhole principle, there exists $k_0 \in \left\lbrace 0,1,2,\cdots, \floor{C_{univ}M^{3\mu+3}} \right\rbrace$ such that
    \begin{equation}    \int\limits_{-1}^0\int\limits_{R_{k_0}<|x|<R_{k_0+1}}|v|^3 + |p_{v\otimes v}|^\frac{3}{2} \;dxdt \leq M^{-3\mu}.\label{annularCKNint}
    \end{equation}
    We observe that $R\leq R_{k_0}\leq R\exp{\left(C_{univ}\mu M^{3\mu+6} \right)}$. Next, define $\tilde{A}:= \left\lbrace R_{k_0}+1<|x|<M^{3\mu} R_{k_0}-1 \right\rbrace$, and let $M_0(\mu)\geq 4^\frac{1}{3\mu}$ (so that $\tilde{A}$ is non-empty). Then by \eqref{annularCKNint}, 
    \begin{equation}
        \sup_{x_0\in \tilde{A}}\int_{-1}^0\int_{B_{1}(x_0)} |v|^3+|p_{v\otimes v}-(p_{v\otimes v})_{B_1(x_0)}|^\frac{3}{2} \;dxdt \leq C_{univ}M^{-3\mu} \leq \frac{\varepsilon_{*}}{3},
    \end{equation}
    where $M\geq M_0(\mu)\geq \max\left(4^\frac{1}{3\mu},\left(\frac{\varepsilon_{*}}{3C_{univ}}\right)^{-\frac{1}{3\mu}}\right)$ and $\varepsilon_*$ is from Corollary \ref{CKN higher order derivatives for forcing}. 
    \\Additionally, using H\"older's inequality, Poincar\'e's inequality and Calderón-Zygmund estimates, 
    \begin{align}
        \sup_{x_0\in \tilde{A}}\int_{-1}^0\int_{B_{1}(x_0)} |p_{F}-(p_{F})_{B_1(x_0)}|^\frac{3}{2} \;dxdt&\lesssim \sup_{x_0\in \tilde{A}}\left(\int_{-1}^0\int_{B_{1}(x_0)} |p_{F}-(p_{F})_{B_1(x_0)}|^2 \;dxdt\right)^\frac{3}{4}  \\&\lesssim\sup_{x_0\in \tilde{A}}\left(\int_{-1}^0\norm{\nabla p_F(\cdot,t)}{L^2(B_1(x_0))}^2 \;dxdt\right)^\frac{3}{4}
        \\&\lesssim \norm{F}{L^2(\R^3\times(-1,0))}^\frac{3}{2}\lesssim M^{-3\mu}\leq \frac{\varepsilon_*}{3}.
    \end{align}
    We also have $\int_{-1}^0\int_{\R^3} |F|^6\;dxdt \leq  M^{-12\mu}\leq \frac{\varepsilon_*}{3}. $ 
    Thus, we may apply Corollary \ref{CKN higher order derivatives for forcing} to find that for all $x_0\in \tilde{A},$
    we have for $k=0,1$ 
    \begin{align}
        \norm{\nabla^kv}{L^\infty\left(Q_\frac{1}{4}(x_0,0)\right)} \lesssim M^{-\frac{\mu}{2}}, \quad
    \text{and}  \quad\norm{\omega}{C^{0,\frac{1}{6}}_\text{par}\left(Q_\frac{1}{4}(x_0,0)\right)} &\lesssim M^{-\frac{\mu}{2}}.   \label{annbounds1} 
    \end{align} 
    Consider $\hat{R}:=2R_{k_0}$. Observe that, as $R_{k_0}\geq R \geq 1$ and $M>4^\frac{1}{3\mu}$:
    \begin{align}
        M^{3\mu} R_{k_0} - 1> \frac{M^{3\mu}}{2}R_{k_0} > 2R_{k_0} > R_{k_0}+1.
    \end{align}
    Then we see that \begin{equation}
    A:=\left\lbrace \hat{R} <|x|<\frac{M^{3\mu}}{4}\hat{R}\right\rbrace \subset \tilde{A}.
\end{equation}
Finally, we show the bound on the gradient of the vorticity. 
We apply the parabolic Caccioppoli inequality on the annuli $A_{(2)}:=B_r\setminus B_\frac{r}{2}\subset A_{(1)}:=B_{2r}\setminus B_\frac{r}{4}\subseteq A\subset \tilde{A}$ where $r\geq 1$. Indeed, applying the statement contained in Remark \ref{CaccioppoliAnnulus} and using \eqref{annbounds1} shows that for $0\leq t\leq\frac{1}{16}$ :
    \begin{align}
        \int_{-t}^0\int_{A_{(2)}}|\omega|^2+|\nabla\omega|^2\;dxds&\lesssim\norm{\nabla v}{L^\infty(A_{(1)}\times (-t,0))}\norm{\omega}{C^{0,\frac{1}{6}}_\text{par}(A_{(1)}\times (-t,0))}t^{\frac{1}{12}}r^3+\left(1+\frac{1}{r^2}\right)\int_{-t}^0\int_{A_{(1)}} |\omega|^2\;dxds\label{caciiobegin}\\&\quad+ \norm{\omega\otimes v}{L^2(A_{(1)}\times(-t,0))}^2 + \norm{F}{L^2(A_{(1)}\times(-t,0))}^2  \nonumber
        \\ &\lesssim \norm{\nabla v}{L^\infty(A_{(1)}\times (-\frac{1}{16},0))}\norm{\omega}{C^{0,\frac{1}{6}}_\text{par}(A_{(1)}\times (-\frac{1}{16},0))}t^{\frac{1}{12}}r^3+\norm{F}{L^2(A_{(1)}\times (-1,0))}^2\\ &\qquad +\left(1+\frac{1}{r^2}+\norm{v}{L^\infty(A_{(1)}\times (-\frac{1}{16},0))}^2\right)\int_{-t}^0\int_{A_{(1)}} |\omega|^2\;dxds  \nonumber 
        \\ &\lesssim M^{-\mu}t^\frac{1}{12}r^3+t^\frac{2}{3}r^2 \norm{F}{L^6(A_{(1)}\times (-1,0))}^2 + (1+\frac{1}{r^2}+M^{-\mu})tr^3M^{-\mu}
        \\ &\lesssim M^{-\mu}t^\frac{1}{12}r^3+t^\frac{2}{3}r^2 \norm{F}{L^6(A_{(1)}\times (-1,0))}^2.
        \end{align}
\end{proof}

\subsection{Carleman inequalities with forcing} \label{Carleman Section}
Below is the ``first Carleman inequality" with incorporated forcing, which corresponds to backwards uniqueness in the unforced setting (cf. \cite[Proposition 4.2]{Tao21}):
\begin{Proposition}\label{FirstCarlemanInequality}
    Let $T>0$, $C_{Carl} \geq 1$, $0<r<R$ and denote the space-time annulus:
    $$\mathcal{A}:= \left\lbrace (x,t)\in \R^3\times[0,T] : r\leq |x|\leq R \right\rbrace.$$
    Suppose that $w:\mathcal{A}\to\R^3$ is smooth and satisfies the differential inequality
    \begin{equation}\label{diffineq1}
        |\partial_tw + \Delta w| \leq \frac{|w|}{C_{Carl}T} + \frac{|\nabla w|}{(C_{Carl}T)^\frac{1}{2}} + |G|.
    \end{equation}
    and $r^2\geq 4C_{Carl}T.$
    Then we have:
    \begin{equation}
        Z \lesssim C_{Carl}^3e^{-\frac{rR}{4C_{Carl}T}}\left( X + e^\frac{2R^2}{C_{Carl}T}Y + Te^\frac{3R^2}{2C_{Carl}T}\iint_\mathcal{A}|G|^2\;dxdt\right),
    \end{equation}
    where 
    \begin{align}
        X &:= \iint_\mathcal{A} e^\frac{2|x|^2}{C_{Carl}T}\left( \frac{|w|^2}{T} + |\nabla w|^2 \right) \;dxdt ,
        & Y &:= \int\limits_{r\leq |x|\leq R} |w(x,0)|^2\;dx,
        \\Z &:= \int_0^\frac{T}{4}\int\limits_{10r\leq |x| \leq \frac{R}{2}} \frac{|w|^2}{T} + |\nabla w|^2 \;dxdt.
    \end{align}
\end{Proposition}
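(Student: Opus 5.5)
The plan is to follow Tao's proof of \cite[Proposition 4.2]{Tao21} essentially verbatim, keeping track of the extra term $|G|$ throughout. The first step is to rescale. Under $w(x,t)\mapsto w(\lambda x,\lambda^2 t)$ with $\lambda=(C_{Carl}T)^{1/2}$, the differential inequality \eqref{diffineq1} keeps its form with $C_{Carl}T=1$ and $G$ replaced by $\lambda^2 G$. We may therefore assume $C_{Carl}T=1$, so that $T=1/C_{Carl}\le 1$. The factors $T$ and $C_{Carl}^3$ in the conclusion come back when we undo the scaling; the factor $C_{Carl}^3$ reflects the ratio of $T$ to the unit time scale.

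Next, we cut off. Let $\psi(x,t)$ equal $1$ on $\{20r\le |x|\le R/2\}\times[0,T/4]$ (after slightly enlarging the region). It should vanish near $|x|=r$, near $|x|=R$ and near $t=T$, with $|\nabla\psi|\lesssim 1/r$, $|\Delta\psi|\lesssim 1/r^2$ and $|\partial_t\psi|\lesssim 1/T$, and it should not vanish at $t=0$. Apply Tao's weighted identity, i.e.\ the Carleman estimate with weight $e^{g}$, to $u=\psi w$. The weight is
\[
g(x,t)=\alpha(T-t)|x|+\frac{|x|^2}{\text{const}}\,,
\]
roughly of Tao's form $\alpha |x| (T'-t)+\ldots$ with $\alpha\sim R/T$, where the constants are chosen so that the weight is largest at $|x|\sim R$ and $t=0$, and smallest in the middle region. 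The estimate has the schematic form
\[
\iint e^{2g}\Big(\alpha|u|^2+|\nabla u|^2\Big)\lesssim \iint e^{2g}|(\partial_t+\Delta)u|^2+\int e^{2g(\cdot,0)}|u(\cdot,0)|^2 .
\]
It is valid provided $r^2\ge 4C_{Carl}T$, i.e.\ $r\ge 2$ after scaling, which ensures that the pseudoconvexity term $\alpha/|x|$ corrections are dominated.

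We then expand $(\partial_t+\Delta)(\psi w)=\psi(\partial_t+\Delta)w+2\nabla\psi\cdot\nabla w+w(\partial_t+\Delta)\psi$ and use \eqref{diffineq1}. The terms $\psi|w|$ and $\psi|\nabla w|$ are absorbed into the left-hand side, since $\alpha\gg1$ and the gradient coefficient is $1$ after scaling. The remainders supported where $\nabla\psi\neq0$ or $\partial_t\psi\ne0$ are bounded by $\sup e^{2g}$ on those regions times $X$, after comparing against $e^{2|x|^2/(C_{Carl}T)}$. The $t=0$ term gives $e^{2R^2/(C_{Carl}T)}Y$. The new term is $\iint e^{2g}\psi^2|G|^2$, and we bound it by $\sup_{\mathcal A}e^{2g}\iint|G|^2\le e^{3R^2/(2C_{Carl}T)}\iint|G|^2$. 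Finally, bound the left-hand side from below on the region $10r\le|x|\le R/2$, $t\le T/4$ by its infimum, and divide. This yields the overall factor $e^{-rR/(4C_{Carl}T)}$, coming from the ratio between the weight in the good region and the weights where the cutoff derivatives live.

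The main obstacle is choosing the weight $g$ so that three requirements hold at once. The infimum on the target region must beat the cutoff-region supremum by exactly $e^{rR/(4C_{Carl}T)}$. The supremum over the whole annulus, which governs the forcing term, must be at most $e^{3R^2/(2C_{Carl}T)}$. And the pseudoconvexity condition must hold. I would take Tao's weight verbatim and simply check the forcing supremum. If that supremum came out as $e^{2R^2/(C_{Carl}T)}$ instead, I would reduce the quadratic coefficient slightly and accept a worse constant.
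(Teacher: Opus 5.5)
There is a genuine gap in how you remove the top-time boundary term. You build a time cutoff into $\psi$ with $|\partial_t\psi|\lesssim 1/T$, and you claim the commutator $w\,\partial_t\psi$ is controlled by $X$ times a ratio of weights. With a transition layer of length $\sim T$ this fails.

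Take the weight $g=\alpha(T_0-t)|x|+\frac{|x|^2}{C_{Carl}T}$ with $\alpha=\frac{R}{2C_{Carl}T^2}$; this value of $\alpha$ is what makes the outer cutoff layer $|x|\ge R/2$ harmless. Wherever $T_0-t\ge cT$ one has $g-\frac{2|x|^2}{C_{Carl}T}\ge\frac{|x|}{C_{Carl}T}\left(\frac{cR}{2}-|x|\right)$, which equals $\frac{c^2R^2}{16C_{Carl}T}$ at $|x|=\frac{cR}{4}$. Since $X$ carries only the weight $e^{2|x|^2/(C_{Carl}T)}$, the best bound on the commutator in terms of $X$ is of size $e^{c^2R^2/(16C_{Carl}T)}X/T$. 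By contrast, the weight on the target region $\{t\le T/4,\ |x|\ge 10r\}$ is bounded below only by $e^{O(rR/(C_{Carl}T))}$. For $R\gg r$ the argument then gives no decay at all, let alone $e^{-rR/(4C_{Carl}T)}$. The underlying reason is that the linear part of the weight, which produces the gain, is compatible with the quadratic weight of $X$ only where $T_0-t\lesssim (r/R)T$ or $|x|\gtrsim R/2$.

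The missing idea is Tao's treatment of the final time, which the paper follows.
\begin{itemize}
\item Average over $[T/2,T)$ to find $T_0$ with $\int e^{2|x|^2/(C_{Carl}T)}\left(\frac{|w(x,T_0)|^2}{T}+|\nabla w(x,T_0)|^2\right)dx\lesssim X/T$.
\item Let the linear part of the weight vanish exactly at $T_0$ and use only a spatial cutoff.
\item Keep the boundary term $\int|\nabla(\psi w)(x,T_0)|^2e^{g(x,T_0)}\,dx$ produced by the Carleman identity. The accompanying $F|\psi w|^2$ term at $T_0$ can be dropped, since $F=\partial_tg-\Delta g-|\nabla g|^2\le 0$.
\end{itemize}
Because $g(x,T_0)=|x|^2/(C_{Carl}T)$ and $|\nabla\psi|^2\lesssim r^{-2}\le 1/T$, this boundary term is $\lesssim X/T$. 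The only remaining cutoff errors are the spatial layers. Near $|x|\sim r$ they contribute $e^{2\alpha Tr}X/T$, and near $|x|\sim R$ they contribute $X/T$. Against the lower bound $e^{5\alpha Tr/2}$ on the target region, this gives exactly $e^{-\alpha Tr/2}=e^{-rR/(4C_{Carl}T)}$.

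A time cutoff can also be made to work, but only with a transition layer of width at most $\frac{2r}{R}T$ ending where the linear part of the weight vanishes. On that layer $g\le\frac{2|x|^2}{C_{Carl}T}$ for $|x|\ge r$. The resulting loss $(R/r)^2$ can be absorbed, because $r^2\ge 4C_{Carl}T$ gives $R/r\le\frac{rR}{4C_{Carl}T}$.

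With this repaired, your handling of the $t=0$ term and of the forcing term matches the paper, via $\sup_{\mathcal A}g\le\alpha TR+\frac{R^2}{C_{Carl}T}=\frac{3R^2}{2C_{Carl}T}$. This requires working with $e^{g}$ rather than $e^{2g}$. Note also that the absorption of the $|w|$ and $|\nabla w|$ terms rests on the explicit coefficients $\frac{28}{C_{Carl}^2T^2}$ and $\frac{4}{C_{Carl}T}$ on the left of Tao's inequality, not on $\alpha\gg 1$.
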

\begin{proof}
    We closely follow Tao's Proposition 4.2 \cite{Tao21}. We provide sufficient details so that the reader can verify the estimates of the additional term due to the forcing $G$. As in \cite{Tao21}, we find that there exists a time $T_0\in[\frac{T}{2},T)$ such that
    \begin{align}
        \int_{r\leq|x|\leq R} e^\frac{2|x|^2}{C_{Carl}T}\left(\frac{|w(x,T_0)|^2}{T}+ |\nabla w(x,T_0)|^2\right)\;dx \lesssim \frac{X}{T}.
    \end{align}
    As in \cite{Tao21}, define \begin{align}
        \alpha&:=\frac{R}{2C_{Carl}T^2}, & g(x,t)&:= \alpha(T_0-t)|x|+\frac{|x|^2}{C_{Carl}T}, & F&:= \partial_tg-\Delta g - |\nabla g|^2.
    \end{align}
    Let $\psi\in C^\infty_c(\left\lbrace r<|x|<R
    \right\rbrace; [0,1])$ be such that $\psi = 1$ on $\left\lbrace 2r<|x|<\frac{R}{2}
    \right\rbrace$
    and $|\nabla^j\psi(x)| \lesssim |x|^{-j}$ for $j=0,1,2$ and $r<|x|<R$.
    By the same arguments as \cite{Tao21}, we get
    \begin{multline}\label{Taopg30}
        \int_0^{T_0}\int_{2r<|x|<\frac{R}{2}}\left(\frac{28|w|^2}{C_{Carl}^2T^2}+\frac{4|\nabla w|^2}{C_{Carl}T}\right)e^g\;dxdt\leq \frac{1}{2}\int_0^{T_0}\int_{\R^3}|L(\psi w)|^2e^g\;dxdt + \int_{\R^3}|\nabla(\psi w)(x,T_0)|^2e^{g(x,T_0)}\;dx\\+\frac{1}{2}\int_{\R^3}|F(x,0)||(\psi w)(x,0)|^2e^{g(x,0)}\;dx.
    \end{multline}
    Now, for $2r<|x|<\frac{R}{2}$, we use the differential inequality (\ref{diffineq1}) to see that:
    \begin{align}
        |L(\psi w)|^2= |Lu|^2 \leq 4\left(\frac{|w|^2}{C_{Carl}^2T^2} + \frac{|\nabla w|^2}{C_{Carl}T}\right)+2|G|^2.
    \end{align}
    Therefore,
    \begin{multline}
        \frac{1}{2}\int_0^{T_0}\int_{\R^3}|L(\psi w)|^2e^g\;dxdt \leq \frac{1}{2}\int_0^{T_0}\int_{|x|\in [r,2r]\cup[\frac{R}{2},R]}|L(\psi w)|^2e^g\;dxdt \\+\int_0^{T_0}\int_{2r<|x|<\frac{R}{2}}\left(\frac{2|w|^2}{C_{Carl}^2T^2} + \frac{2|\nabla w|^2}{C_{Carl}T}\right) e^g\;dxdt + \int_0^{T_0}\int_{2r<|x|<\frac{R}{2}} |G|^2e^g\;dxdt.\label{taolineplusg}
    \end{multline}
    Following \cite{Tao21}, we may absorb terms on the right side of \eqref{taolineplusg} into the left hand side of \eqref{Taopg30} to get:
    \begin{align}
        \int_0^{T_0}\int_{2r<|x|<\frac{R}{2}}\left(\frac{|w|^2}{C_{Carl}^2T^2}+\frac{|\nabla w|^2}{C_{Carl}T}\right)e^g\;dxdt\lesssim  \int_0^{T_0}\int_{|x|\in [r,2r]\cup[\frac{R}{2},R]}|L(\psi w)|^2e^g\;dxdt \\+\int_0^{T_0}\int_{2r<|x|<\frac{R}{2}} |G|^2e^g\;dxdt.      + \int_{\R^3}|\nabla(\psi w)(x,T_0)|^2e^{g(x,T_0)}\;dx\\+\int_{\R^3}|F(x,0)||(\psi w)(x,0)|^2e^{g(x,0)}\;dx.
    \end{align}
    Next, for $|x|\in [r,2r]\cup[\frac{R}{2},R]$, we use the properties of $\psi$ to see that
    \begin{align}
        |L(\psi w)|^2\lesssim \frac{|w|^2}{C_{Carl}^2T^2}+\frac{|\nabla w|^2}{C_{Carl}T} + |G|^2.
    \end{align}
    Substituting this into the previous inequality gives
    \begin{align}
        \int_0^{T_0}\int_{2r<|x|<\frac{R}{2}}\left(\frac{|w|^2}{C_{Carl}^2T^2}+\frac{|\nabla w|^2}{C_{Carl}T}\right)e^g\;dxdt\lesssim  \int_0^{T_0}\int_{|x|\in [r,2r]\cup[\frac{R}{2},R]}\left(\frac{|w|^2}{C_{Carl}^2T^2}+\frac{|\nabla w|^2}{C_{Carl}T}\right)e^g\;dxdt \\+\int_0^{T_0}\int_{r<|x|<R} |G|^2e^g\;dxdt     + \int_{\R^3}|\nabla(\psi w)(x,T_0)|^2e^{g(x,T_0)}\;dx\\+\int_{\R^3}|F(x,0)||(\psi w)(x,0)|^2e^{g(x,0)}\;dx.
    \end{align}
    All terms without $G$ may be treated as in \cite{Tao21}. This gives
    \begin{align}
        e^{\frac{5\alpha Tr}{2}}\int_0^{\frac{T_0}{4}}\int_{10r<|x|<\frac{R}{2}}\left(\frac{|w|^2}{C_{Carl}^2T^2}+\frac{|\nabla w|^2}{C_{Carl}T}\right)\;dxdt\lesssim  \frac{X}{T}e^{2\alpha Tr}+C_{Carl}e^\frac{2R^2}{C_{Carl}T}\int_{\R^3}\frac{|w(x,0)|^2}{T}\;dx\\+\int_0^{T_0}\int_{r<|x|<R} |G|^2e^g\;dxdt . 
    \end{align}
    Now, to estimate the $G$ term, we see that on $\left\lbrace r<|x|<R \right\rbrace\times[0,T_0] $:
    \begin{equation}
         g(x,t) = \alpha (T_0-t)|x| + \frac{|x|^2}{C_{Carl}T} \leq \alpha TR +\frac{R^2}{C_{Carl}T} = \frac{3R^2}{2C_{Carl}T}.
     \end{equation}
     Hence
     \begin{equation}
         \int_0^{T_0}\int_{r<|x|<R} |G|^2e^g\;dxdt \leq e^{\frac{3R^2}{2C_{Carl}T}}\int_0^{T_0}\int_{r<|x|<R} |G|^2\;dxdt.
     \end{equation}
     Substituting this into the previous inequality yields
     \begin{align}
        e^{\frac{5\alpha Tr}{2}}\int_0^{T_0}\int_{10r<|x|<\frac{R}{2}}\left(\frac{|w|^2}{C_{Carl}^2T^2}+\frac{|\nabla w|^2}{C_{Carl}T}\right)\;dxdt\lesssim  \frac{X}{T}e^{2\alpha Tr}+C_{Carl}e^\frac{2R^2}{C_{Carl}T}\int_{\R^3}\frac{|w(x,0)|^2}{T}\;dx\\+e^{\frac{3R^2}{2C_{Carl}T}}\int_0^{T_0}\int_{r<|x|<R} |G|^2\;dxdt. 
    \end{align}
    Rearranging, using that $C_{Carl}\geq 1$ and the definition of $\alpha$ gives the claimed result.
\end{proof}

Next, we have the second Carleman inequality with incorporated forcing, corresponding to quantitative unique continuation (cf. \cite[Proposition 4.3]{Tao21}): \begin{Proposition} \label{SecondCarlemanInequality}
    Let $T,r>0$, $C_{Carl}\geq 1,$ and denote the space-time cylinder 
    $$\mathcal{C}:= \left\lbrace (x,t)\in \R^3\times\R : t\in[0,T], |x|\leq r \right\rbrace.$$
    Suppose that $w:\mathcal{C}\to\R^3$ is smooth and satisfies the differential inequality
    \begin{equation}
        |\partial_tw + \Delta w| \leq \frac{|w|}{C_{Carl}T} + \frac{|\nabla w|}{(C_{Carl}T)^\frac{1}{2}} + |G|.
    \end{equation}
    Assume that \begin{equation}
        r^2 \geq 4000T. 
    \end{equation}
    Then for all $0<t_1\leq t_0<\frac{T}{10000},$ we have the inequality:
    \begin{equation}
        Z \lesssim e^{-\frac{r^2}{500t_0}}X + t_0^\frac{3}{2}\left( \frac{6et_0}{t_1}\right)^\frac{r^2}{400t_0}Y +  \left(\frac{t_0}{t_1}\right)^\frac{3}{2}\left(\frac{6et_0}{t_1}\right)^{\frac{r^2}{400t_0}}\int_0^T\int_{|x|\leq r}(t+t_1)|G|^2e^{-\frac{|x|^2}{4(t+t_1)}}\;dxdt
    \end{equation}
    where 
    \begin{equation}
        X := \int_0^\frac{T}{4}\int_{ |x| \leq r}\frac{|w|^2}{T} +|\nabla w|^2  \;dxdt,\qquad Y:= \int_{|x|\leq r}|w(x,0)|^2t_1^{-\frac{3}{2}}e^{-\frac{|x|^2}{4t_1}}\;dx,
    \end{equation}
    \begin{equation}
        Z:= \int_{t_0}^{2t_0}\int_{|x|\leq \frac{r}{2}}\left(\frac{|w|^2}{T} +|\nabla w|^2 \right)e^{-\frac{|x|^2}{4t}}\;dxdt.
    \end{equation}
\end{Proposition}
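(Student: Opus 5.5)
We will follow Tao's proof of \cite[Proposition 4.3]{Tao21} essentially verbatim, and track only the additional contribution coming from $G$, exactly as was done for Proposition \ref{FirstCarlemanInequality}. The weight is the backwards-heat Gaussian centred at $(0,-t_1)$. Concretely, we set
\begin{align}
g(x,t) := -\frac{|x|^2}{4(t+t_1)} - \frac{3}{2}\log(t+t_1) + \text{(Tao's lower-order correction in $t$)},
\end{align}
and cut off with $\psi\in C^\infty_c(B_r;[0,1])$ satisfying $\psi=1$ on $B_{r/2}$ and $|\nabla^j\psi|\lesssim r^{-j}$. Writing $u:=\psi w$ and $L:=\partial_t+\Delta$, we will apply Tao's weighted energy identity (the log-convexity/frequency-function computation) to $u$ on $[0,T/4]$. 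Following \cite{Tao21}, this yields a Carleman estimate. Its left-hand side controls $\iint (t+t_1)\bigl(|u|^2/(\ldots)+|\nabla u|^2\bigr)e^{g}$ over $[t_0,2t_0]$, after the usual pigeonholing in time and choice of the parameter involving $\frac{r^2}{400t_0}$. Its right-hand side is bounded by $\iint (t+t_1)|Lu|^2e^{g}$ plus boundary terms at $t=0$ and at the top time.

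Next we bound $|Lu|^2$ pointwise. On $B_{r/2}$ the differential inequality gives
\begin{align}
|Lu|^2\leq 3\Bigl(\frac{|w|^2}{C_{Carl}^2T^2}+\frac{|\nabla w|^2}{C_{Carl}T}\Bigr)+3|G|^2.
\end{align}
On the transition region $r/2\leq|x|\leq r$ there are additional cutoff terms $\lesssim r^{-2}|w|^2+r^{-1}|\nabla w|$, plus $|G|^2$. The $w$-terms on $B_{r/2}$ are absorbed into the left-hand side, since $C_{Carl}\geq1$ and $t+t_1\lesssim T$, exactly as in Tao. The cutoff terms live where $e^{g}\lesssim (t+t_1)^{-3/2}e^{-r^2/(16(t+t_1))}$, so after the normalisation by the Gaussian factors they produce the $e^{-\frac{r^2}{500t_0}}X$ term; here we use $r^2\geq 4000T$ and $t_0<T/10000$. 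The boundary term at $t=0$ produces $t_0^{3/2}(6et_0/t_1)^{r^2/(400t_0)}Y$. All of these pieces are unchanged from \cite{Tao21}.

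The new term is $\iint_{\mathcal C}(t+t_1)|G|^2 e^{g}$, with $e^{g}\approx (t+t_1)^{-3/2}e^{-|x|^2/(4(t+t_1))}$. The factor $(t+t_1)^{-3/2}\leq t_1^{-3/2}$ is combined with the overall multiplier $t_0^{3/2}$ from the normalisation of $Z$, and this gives the prefactor $(t_0/t_1)^{3/2}$. The amplification factor $(6et_0/t_1)^{r^2/(400t_0)}$ arises from the same division by the lower bound on the weight at times $t\in[t_0,2t_0]$ that produces it in front of $Y$. This step is the main obstacle: we must check that the chosen Carleman parameter (Tao's $\alpha$, tied to $r^2/(400t_0)$) multiplies the $G$-term by at most the same factor as it multiplies the $Y$-term. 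We would first try to bound the $t$-dependent part of $g$ by its value at $t=0$, using monotonicity in $t$ of Tao's correction term, so that the $G$-term inherits exactly the $Y$-prefactor together with the extra $(t_0/t_1)^{3/2}$ coming from $(t+t_1)^{-3/2}$. Collecting all terms then gives the stated inequality.
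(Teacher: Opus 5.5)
Your proposal is correct and is essentially the paper's proof. The paper likewise fixes $T_0\in(\frac{T}{200},\frac{T}{100})$ by pigeonholing and uses Tao's weight with $\alpha=\frac{r^2}{400t_0}$. It then bounds the new term $I_G=\int_0^{T_0}\int_{|x|\le r}(t+t_1)|G|^2e^g$ exactly as you propose: $-\frac32\log(t+t_1)\le-\frac32\log t_1$, and the correction $-\alpha\log\frac{t+t_1}{T_0+t_1}+\alpha\frac{t+t_1}{T_0+t_1}$ is decreasing on $[0,T_0]$, which gives $e^g\lesssim t_1^{-3/2}e^{-\frac{|x|^2}{4(t+t_1)}}\bigl(\frac{e(T_0+t_1)}{t_1}\bigr)^\alpha$; multiplying by the normalisation $\alpha t_0^{3/2}\bigl(\frac{3t_0}{T_0+t_1}\bigr)^\alpha$ and absorbing $\alpha\le 2^\alpha$ yields the stated forcing term.

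One small inaccuracy is harmless because you defer those terms to Tao: your bound $e^g\lesssim(t+t_1)^{-3/2}e^{-\frac{r^2}{16(t+t_1)}}$ on the cutoff annulus drops the $\alpha$-correction. That correction is not lower order there and is only cancelled by the normalisation factor $\bigl(\frac{3t_0}{T_0+t_1}\bigr)^\alpha$.
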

\begin{proof}
    We closely follow the proof given by Tao \cite{Tao21}, providing sufficient details so that the reader can verify the estimates of the additional term due to the forcing $G$. As in \cite{Tao21}, observe that there exists $T_0\in (\frac{T}{200}, \frac{T}{100})$ such that \begin{equation}
        \int_{|x|\leq r} \frac{|w(x,T_0)|^2}{T}+|\nabla w(x,T_0)|^2\;dx \lesssim \frac{X}{T}.
    \end{equation}
    As in \cite{Tao21}, define \begin{equation}
        \alpha := \frac{r^2}{400t_0} \geq 10
    \quad \text{and} \quad
        g:= -\frac{|x|^2}{4(t+t_1)} -\frac{3}{2}\log(t+t_1) -\alpha\log(\frac{t+t_1}{T_0+t_1}) + \alpha\frac{t+t_1}{T_0+t_1}.
    \end{equation}
    Let $\psi \in C^\infty_c(B_r;[0,1])$ with $\psi = 1$ on $B_\frac{r}{2}$ and $|\nabla^j\psi|\lesssim r^{-j}$ on $B_r\setminus \bar{B}_\frac{r}{2}$ for $j=0,1,2$.
    Following \cite{Tao21}, we obtain the inequality:
    \begin{align}
        &\int_0^{T_0}\int_{|x|\leq \frac{r}{2}} \left(\frac{t+t_1}{10(T_0+t_1)}|\nabla w|^2 +\frac{\alpha}{10(T_0+t_1)}| w|^2 \right)e^g\;dxdt 
        \\ &\leq \int_0^{T_0}\int_{\R^3} \left( (t+t_1)|L(\psi w)|^2 \right)e^g\;dxdt \\ &\qquad + O\left(T_0\int_{|x|\leq r}|\nabla(\psi w)(x,T_0)|^2e^{g(x,T_0)}\;dx  + \alpha\int_{|x|\leq r} |w(x,0)|^2e^{g(x,0)}\;dx \right).  \label{mainCarle ineq1}
    \end{align}
    We estimate an upper bound for $\int_0^{T_0}\int_{\R^3}  (t+t_1)|L(\psi w)|^2 e^g\;dxdt$ by splitting the integral into the cases $|x|\leq \frac{r}{2}$ and $\frac{r}{2}\leq |x|\leq r.$
    \\For $|x| \leq \frac{r}{2}$, we have $\psi = 1$ and the differential inequality: 
    \begin{align}
        |L(\psi w)|^2  =  |Lw|^2 \leq \frac{4|w|^2}{(C_{Carl}T)^2} + \frac{4|\nabla w|^2}{(C_{Carl}T)} + 4|G|^2.
    \end{align}
    Therefore, 
    \begin{multline}
        \int_0^{T_0}\int_{|x| \leq \frac{r}{2}} \left( (t+t_1)|L(\psi w)|^2 \right)e^g\;dxdt \\ \leq  \int_0^{T_0}\int_{|x| \leq \frac{r}{2}} \left( \frac{4(t+t_1)|w|^2}{(C_{Carl}T)^2} + \frac{4(t+t_1)|\nabla w|^2}{(C_{Carl}T)} + 4(t+t_1)|G|^2 \right)e^g\;dxdt.
    \end{multline}
    As $C_{Carl}\geq1$, $T>100T_0,$ $t_1 <\frac{T_0}{50},$ 
    we have
    \begin{multline}
        \int_0^{T_0}\int_{|x| \leq \frac{r}{2}} \left( (t+t_1)|L(\psi w)|^2 \right)e^g\;dxdt \\\leq  \frac{4}{5}\int_0^{T_0}\int_{|x|\leq \frac{r}{2}} \left(\frac{\alpha}{10(T_0+t_1)}|w|^2+\frac{t+t_1}{10(T_0+t_1)}|\nabla w|^2 +4(t+t_1)|G|^2 \right)e^g\;dxdt. \label{blob1}  
    \end{multline}
    For $\frac{r}{2} \leq |x| \leq r$, we use the product rule, the differential inequality, and the properties of the bump function, and $r^2\geq 4000T$ to see:
    \begin{align}
        |L(\psi w)|^2  &\lesssim \frac{|w|^2}{T^2} + \frac{|\nabla w|^2}{T} +|G|^2 + \frac{|\nabla w|^2}{r^2} + \frac{|w|^2}{r^4}
        \lesssim \frac{|w|^2}{T^2} + \frac{|\nabla w|^2}{T} +|G|^2.\label{Lphiannul}
    \end{align}
    Noting that $L(\psi w) = 0$ for $|x|>r,$ we then obtain from (\ref{mainCarle ineq1}), (\ref{blob1}) and (\ref{Lphiannul}),
    \begin{align}
        &\int_0^{T_0}\int_{|x|\leq \frac{r}{2}} \left(\frac{t+t_1}{10(T_0+t_1)}|\nabla w|^2 +\frac{\alpha}{10(T_0+t_1)}| w|^2 \right)e^g\;dxdt 
        \\ &\leq \frac{4}{5}\int_0^{T_0}\int_{|x|\leq \frac{r}{2}} \left(\frac{\alpha}{10(T_0+t_1)}|w|^2+\frac{t+t_1}{10(T_0+t_1)}|\nabla w|^2 +4(t+t_1)|G|^2 \right)e^g\;dxdt \\ &\qquad+O\Bigg( \int_0^{T_0}\int_{\frac{r}{2}\leq|x|\leq r} \left( (t+t_1)\left(\frac{|w|^2}{T^2} + \frac{|\nabla w|^2}{T} +|G|^2 \right)\right)e^g\;dxdt \\ &\qquad \qquad\quad+ T_0\int_{|x|\leq r}|\nabla(\psi w)(x,T_0)|^2e^{g((x,T_0))}\;dx  + \alpha\int_{|x|\leq r} |w(x,0)|^2e^{g(x,0)}\;dx \Bigg).
    \end{align}
    By also shrinking the time domain on the left-hand side and also using that $t_1<t_0<T_0$, we obtain
    \begin{align}
        &\int_{t_0}^{2t_0}\int_{|x|\leq \frac{r}{2}} \left(\frac{t_0}{T_0}|\nabla w|^2 +\frac{\alpha}{T_0}| w|^2 \right)e^g\;dxdt \label{LHSCarlem}
        \\ &\lesssim  \int_0^{T_0}\int_{\frac{r}{2}\leq|x|\leq r} \left( (t+t_1)\left(\frac{|w|^2}{T^2} + \frac{|\nabla w|^2}{T} \right)\right)e^g\;dxdt \\ &\qquad + T_0\int_{|x|\leq r}|\nabla(\psi w)(x,T_0)|^2e^{g(x,T_0)}\;dx  + \alpha\int_{|x|\leq r} |w(x,0)|^2e^{g(x,0)}\;dx \\ &\qquad+ \int_0^{T_0}\int_{|x|\leq r} (t+t_1)|G|^2 e^g \;dxdt 
        =: I_X + I_{T_0} +I_0 + I_G.\label{RHSCarlem}
    \end{align}
    We now estimate the left-hand side of \eqref{LHSCarlem} from below, and then estimate the right-hand side \eqref{RHSCarlem} from above. As noted in \cite{Tao21}, in the region $t_0<t<2t_0$, $|x|\leq \frac{r}{2}$, we have
    \begin{align}
        g \geq -\frac{|x|^2}{4t}-\frac{3}{2}\log(3t_0)-\alpha\log\left( \frac{3t_0}{T_0+t_1}\right)
    \end{align}
    so
    \begin{align}
        e^g \gtrsim e^{ -\frac{|x|^2}{4t}}t_0^{-\frac{3}{2}}\exp\left( -\alpha\log\left( \frac{3t_0}{T_0+t_1}\right)\right).
    \end{align}
    Combining these together, we see that
    \begin{align}
        \int_{t_0}^{2t_0} \int_{|x|\leq \frac{r}{2}} \left( \frac{t_0}{T_0}|\nabla w|^2 + \frac{\alpha}{T_0}|w|^2\right)e^{ -\frac{|x|^2}{4t}} \;dxdt &\lesssim t_0^{\frac{3}{2}}\exp\left( \alpha\log\left( \frac{3t_0}{T_0+t_1}\right)\right)(I_X+I_{T_0}+I_0+I_G). \label{maincarleineqbase}
    \end{align}
    The terms $I_X,I_{T_0},I_0$ may all be treated in the same way as in \cite{Tao21}. This gives
    \begin{align}
        t_0^{\frac{3}{2}}\exp\left( \alpha\log\left( \frac{3t_0}{T_0+t_1}\right)\right)(I_X+I_{T_0}) &\lesssim e^{-\alpha}X , \label{xt0estim}
    \end{align} and
    \begin{align}
        t_0^{\frac{3}{2}}\exp\left( \alpha\log\left( \frac{3t_0}{T_0+t_1}\right)\right)I_0 &\lesssim  \alpha t_0^{\frac{3}{2}}\left( \frac{3t_0e}{t_1}\right)^\alpha Y. \label{I0estim}
    \end{align}
    Notice that $\frac{t_0}{T_0}\gtrsim \frac{t_0}{r^2} \gtrsim \frac{1}{\alpha}$ and $\frac{\alpha}{T_0} = \frac{r^2}{400t_0T_0}\gtrsim \frac{T}{400t_0T_0} \gtrsim \frac{1}{T}. $
    Using this, $\alpha\geq1,$ and \eqref{maincarleineqbase}-\eqref{I0estim} above,
    \begin{align}
        Z &= \int_{t_0}^{2t_0}\int_{|x|\leq \frac{r}{2}} \left(\frac{|w|^2}{T}+|\nabla w|^2 \right)e^{-\frac{|x|^2}{4t}}\;dxdt \lesssim  \int_{t_0}^{2t_0}\int_{|x|\leq \frac{r}{2}} \left(\frac{\alpha|w|^2}{T_0}+\frac{\alpha t_0}{T_0}|\nabla w|^2 \right)e^{-\frac{|x|^2}{4t}}\;dxdt
        \\ &\lesssim \alpha\int_{t_0}^{2t_0} \int_{|x|\leq \frac{r}{2}} \left( \frac{t_0}{T_0}|\nabla w|^2 + \frac{\alpha}{T_0}|w|^2\right)e^{ -\frac{|x|^2}{4t}} \;dxdt
        \\ &\lesssim \alpha e^{-\alpha}X +\alpha^2 t_0^{\frac{3}{2}}\left( \frac{3t_0e}{t_1}\right)^\alpha Y + \alpha  t_0^{\frac{3}{2}}\exp\left( \alpha\log\left( \frac{3t_0}{T_0+t_1}\right)\right)I_G. 
    \end{align}
    Note that as $\alpha=\frac{r^2}{400t_0}\geq 10$, we have\footnote{This is because $s\to s^\frac{1}{s}$ is decreasing on $[e,\infty)$.} $\alpha^\frac{1}{\alpha}\leq 10^\frac{1}{10}< \sqrt{2}$, so \begin{align}\alpha^2\left(\frac{3et_0}{t_1}\right)^\alpha \leq \left(\frac{6et_0}{t_1}\right)^\alpha  = \left(\frac{6et_0}{t_1}\right)^{\frac{r^2}{400t_0}} .\label{absorbalpha}\end{align}
    \\Also, as $\alpha\geq 10$, $\alpha\leq e^\frac{\alpha}{5},$ hence $\alpha e^{-\alpha} \lesssim  e^{-\frac{4\alpha}{5}} = e^{-\frac{r^2}{500t_0}}.$
    This yields
    \begin{align}
        Z \lesssim e^{-\frac{r^2}{500t_0}}X + t_0^\frac{3}{2}\left(\frac{6t_0e}{t_1}\right)^{\frac{r^2}{400t_0}}Y + \alpha  t_0^{\frac{3}{2}}\exp\left( \alpha\log\left( \frac{3t_0}{T_0+t_1}\right)\right)I_G. \label{carlemanbase2}
    \end{align}
    Finally, we estimate the $I_G$ term.
    Notice that, for $|x|\leq r$, $t\in(0,T_0)$,
    \begin{align}
        g(x,t) &= -\frac{|x|^2}{4(t+t_1)} - \frac{3}{2}\log(t+t_1) +\alpha\log\left(\frac{T_0+t_1}{t+t_1} \right) +\alpha
        \\ &\leq -\frac{|x|^2}{4(t+t_1)} +\log(t_1^{- \frac{3}{2}}) +\alpha\log\left(\frac{T_0+t_1}{t_1} \right) +\alpha
        \\ &\leq  -\frac{|x|^2}{4(t+t_1)} +\log(t_1^{- \frac{3}{2}}) +\alpha\log\left(\frac{e(T_0+t_1)}{t_1}\right).
    \end{align}
    Therefore,
    \begin{align}
        e^{g(x,t)} \lesssim t_1^{-\frac{3}{2}}e^{ -\frac{|x|^2}{4(t+t_1)}}\left(\frac{e(T_0+t_1)}{t_1}\right)^\alpha.
    \end{align}
    This gives
    \begin{align}
        \alpha  t_0^{\frac{3}{2}}\exp\left( \alpha\log\left( \frac{3t_0}{T_0+t_1}\right)\right)I_G &\lesssim  \alpha\left(\frac{t_0}{t_1}\right)^\frac{3}{2}\left(\frac{3et_0}{t_1}\right)^\alpha\int_0^\frac{T}{100}\int_{|x|\leq r}(t+t_1)|G|^2e^{-\frac{|x|^2}{4(t+t_1)}}\;dxdt.
    \end{align}
    Using the definition of $\alpha$ and the same reasoning as (\ref{absorbalpha}) gives
    \begin{align}
        \alpha  t_0^{\frac{3}{2}}\exp\left( \alpha\log\left( \frac{3t_0}{T_0+t_1}\right)\right)I_G &\lesssim \left(\frac{t_0}{t_1}\right)^\frac{3}{2}\left(\frac{6et_0}{t_1}\right)^{\frac{r^2}{400t_0}}\int_0^\frac{T}{100}\int_{|x|\leq r}(t+t_1)|G|^2e^{-\frac{|x|^2}{4(t+t_1)}}\;dxdt.
    \end{align}
    Substituting this into line (\ref{carlemanbase2}) gives the claim.
\end{proof}
\section{Proof of the main quantitative estimate via the spatial concentration method}\label{MainProof}
\begin{Proposition}
     Let $M\geq 1$ be sufficiently large. Suppose that $(v,p)$ is a smooth Leray-Hopf weak solution to the Navier-Stokes equations with forcing $F$ on $\R^3\times (-1,0)$ such that 
    \begin{align}
        \norm{v}{L^\infty_t L^3_x(\R^3\times(-1,0))} &\leq M,
        & \norm{F}{L^2_tH^1_x\cap L^6_{t,x}(\R^3\times(-1,0))} &\leq e^{-e^{M^{609}}}.\label{Quant Assumps}
    \end{align}
    Then we have that
    \begin{align}
        \norm{v}{L^\infty\left(\R^3\times (-\exp\left(-\exp(\exp\left(M^{612} \right)\right)\right),0))} \lesssim  \exp(\exp(\exp(M^{612}))). \label{Quant Bound}
    \end{align}
\end{Proposition}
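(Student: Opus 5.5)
The plan is to follow the spatial concentration strategy of \cite{Spatialconc, Localizedblowup}, with the forcing carried through each step. First I reduce the claimed $L^\infty$ bound to a lower bound on times of vorticity concentration. Suppose $t_0\in(-\exp(-\exp(\exp(M^{612}))),0)$ and a point near which $v$ is large. By translation invariance, the smallness of $F$, and the contrapositive of Proposition \ref{Backwards prop of vort conc}, it suffices to show the following: whenever the vorticity concentrates at a time $t_0$, i.e.
\begin{align}
\int_{B(0,M^{96}(-t_0)^{1/2})}|\omega(x,t_0)|^2\,dx > \frac{M^{-88}}{(-t_0)^{1/2}},
\end{align}
we must have $-t_0 \geq \exp(-\exp(\exp(M^{612})))$. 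Indeed, if concentration fails at every earlier well-separated time $t_2$, then \eqref{smoothingunderprop} gives $|v|\lesssim(-t_2)^{-1/2}$ near the origin up to time $0$. Choosing $t_2$ comparable to $-\exp(-\exp(\exp(M^{612})))$ then yields \eqref{Quant Bound}.

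Next, I propagate the concentration backwards in time. Applying Proposition \ref{Backwards prop of vort conc} iteratively, concentration at $t_0$ gives concentration at a sequence of earlier times $t_0>t_1>\dots$ with $-t_{j+1}\sim M^{200}(-t_j)$. For each scale $T$ in a family of $\sim \log(1/(-t_0))/\log M$ well-separated scales between $-t_0$ and $1$, I want to establish the annular lower bound \eqref{introL3annuli}. The scale $T$ is fixed, and the steps are as follows.

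(i) Proposition \ref{Epoch of regularity} gives an epoch $I'\subset(-T,-T/2)$ on which $v$ and $\nabla v$ are controlled. The Caccioppoli bound \eqref{EpochCacciop} controls $\int_{I'}\int_{B_R}|\omega|^2/T+|\nabla\omega|^2$, so it can serve as the $X$ term in the Carleman inequalities. Concentration at a time $t_1\in I'$ propagates forward along $I'$.

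(ii) Proposition \ref{ann of reg M version}, with $\mu\sim 400$, gives an annulus of regularity $\{4\hat R<|x|<M^{1200}\hat R/16\}$ with $\hat R\le MT^{1/2}\exp(M^{607})$. On it, $\omega$ satisfies \eqref{maindiffineq} with $G=\nabla\times F$. I then apply Proposition \ref{SecondCarlemanInequality} (unique continuation) on cylinders centred at points of the epoch, and Proposition \ref{FirstCarlemanInequality} (backwards uniqueness) on the annulus. Together these transfer the concentration to a Gaussian lower bound \eqref{Gaussian lower bound} for $\omega$ on annuli $B_R\setminus B_{R/2}$ with $R\lesssim \hat R M^{1200}$ at times near $0$. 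A final application of the second Carleman inequality yields an $L^2$ lower bound for $\omega$ on annuli at time $\approx 0$. Converting via the Biot--Savart law and Hölder's inequality gives the $L^3$ bound \eqref{introL3annuli}. In both Carleman applications, the $X$ terms are bounded using \eqref{smalltimeestim1} and \eqref{EpochCacciop} instead of pointwise bounds on $\nabla\omega$, which are unavailable.

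(iii) Summing over $\gtrsim \exp(M^{c'})$ disjoint annuli contradicts $\|v(\cdot,0)\|_{L^3}^3\le M^3$. This forces $\log(1/(-t_0))\lesssim \exp(\exp(M^{c}))$, which gives the required lower bound on $-t_0$.

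The main obstacle is step (ii): controlling the forcing terms $T e^{CR^2/T}\|\nabla\times F\|_{L^2}^2$, and the analogous Gaussian-weighted term in Proposition \ref{SecondCarlemanInequality}, against the lower bounds. These lower bounds are only of size $\exp(-\exp(M^{c}))$ at scale $T$. Because $R\le \hat R M^{1200}\le T^{1/2}\exp(M^{608})$, the amplification is at most $\exp(\exp(2M^{608}))$ relative to the scale-invariant quantity. However, the rescaled norm $T^{1/2}\|\nabla\times F\|_{L^2}$ grows only like a power of $T^{-1}$, so the assumed bound $e^{-e^{M^{609}}}$ beats the amplification only for $T\ge \exp(-\exp(M^{c}))$. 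I would restrict the summation to such scales and check that the number of usable scales, roughly $\exp(M^{c})/\log M$, still exceeds what $\|v(\cdot,0)\|_{L^3}$ allows. I expect the exponent bookkeeping for this restriction to be the delicate part.
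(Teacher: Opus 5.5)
Your overall architecture matches the paper's: backward propagation of concentration, the epoch of regularity, unique continuation (Step 1), backward uniqueness on the annulus of regularity (Step 2), a final unique continuation (Step 3), then summing scales. But your last paragraph rests on a scaling error, and the remedy you propose there would break the argument.

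\textbf{The forcing is subcritical.} Both forcing norms shrink under rescaling to small scales. Rescaling a cylinder of size $T$ to unit size multiplies $\|\nabla F\|_{L^2_{t,x}}$ by $T^{3/4}$ and $\|F\|_{L^6_{t,x}}$ by $T^{13/12}$. So the forcing becomes relatively \emph{smaller} as $T\downarrow 0$. Concretely, the forcing enters as follows:
\begin{itemize}
\item in Proposition \ref{FirstCarlemanInequality}, as $T e^{3R^2/(2C_{Carl}T)}\|\nabla\times F\|_{L^2}^2$;
\item in Proposition \ref{SecondCarlemanInequality}, with the weight $(t+t_1)\le 2T$;
\item in the Caccioppoli bounds for $X_3$ and \eqref{smalltimeestim1}, as $\|F\|_{L^2}^2$ or $t^{2/3}r^2\|F\|_{L^6}^2$.
\end{itemize}
Every lower bound it is compared against carries a factor $T^{-1/2}$, e.g.\ $T_2^{-1/2}\exp(-\exp(M^{608}))$. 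Since $T\le 1$, the $T$-dependence only helps. The amplification factors depend on $R^2/T$ alone, which is at most $M^{O(1)}e^{2M^{608}}$ thanks to the cap $R\le e^{M^{608}}T_1^{1/2}$ in Step 1 and $\hat R\le MT_2^{1/2}\exp(M^{607})$ in Step 2. Hence the single hypothesis $\|F\|\le e^{-e^{M^{609}}}$ makes every forcing term negligible uniformly in $T\in(8M^{400}(-t_0),1]$. This uniformity is exactly what the paper uses; no restriction on scales is needed.

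\textbf{Your proposed cutoff would be fatal.} The annuli $B(e^{M^{608}}T^{1/2})\setminus B(T^{1/2})$ have radius ratio $e^{M^{608}}$. Disjointness therefore forces consecutive scales to differ by a factor $\exp(M^{1216})$, not a power of $M$. Each annulus contributes only $\exp(-\exp(M^{610}))$ to $\|v(\cdot,0)\|_{L^3}^3\le M^3$. So you need roughly $M^3\exp(\exp(M^{610}))$ scales, reaching down to $T\approx\exp(-\exp(\exp(M^{611})))$; this is where the triple exponential in \eqref{Quant Bound} comes from. A cutoff $T\ge\exp(-\exp(M^{c}))$ that the bound $e^{-e^{M^{609}}}$ could afford (under your premise) leaves only about $e^{M^{c}}M^{-1216}$ scales, with $c\approx 609$. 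That is far too few, and it gives no lower bound on $-t_0$ below the cutoff.

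\textbf{Two smaller corrections.}
\begin{itemize}
\item Take $\mu=200$ in Proposition \ref{ann of reg M version}, so that $3\mu+6=606$. With $\mu\sim 400$ that proposition only gives $\hat R\le 2R\exp(CM^{1206})$. The amplification $e^{CM^{1200}\hat R^2/T}$ would then overwhelm $e^{-e^{M^{609}}}$, and $80\hat R$ would leave the Step 1 range.
\item Passing from the $L^2$ lower bound on $\omega(\cdot,0)$ to the $L^3$ bound on $v$ needs the parabolic H\"older bound on $\omega$ from the annulus of regularity, not just Biot--Savart. That bound gives a pointwise lower bound and constant sign for some $\omega_i$ on a ball of radius $T^{1/2}\exp(-6\exp(M^{609}))$. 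Testing against a bump and integrating by parts then gives the bound. Lipschitz control of $\omega$ is unavailable here.
\end{itemize}
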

This proof follows Proposition 2 of \cite{Spatialconc}, which is a physical space analogue of \cite{Tao21}, with adaptations made due to forcing. In particular, care is needed to show that the size of the force in \eqref{Quant Assumps} ensures that the additional forcing terms in the Carleman inequality are negligible. Moreover, we must use Caccioppoli estimates to estimate terms in the Carleman inequalities involving $\norm{\nabla\omega}{L^2}$ (c.f. Remark \ref{Epoch Remark}). 
\begin{proof}
    Assume that $0< -t_0 < M^{-192}$ and that the vorticity concentrates at $t_0$: 
    \begin{align}
        \int\limits_{B(0,M^{96}(-t_0)^\frac{1}{2})}|\omega(x,t_0)|^2\;dx > \frac{M^{-88}}{(-t_0)^\frac{1}{2}}. \label{vorticity concentrates}
    \end{align}
    Then (the contrapositive of) Proposition \ref{Backwards prop of vort conc} tells us that we have backwards propagation of vorticity:
    \begin{align} \label{backwardprop2}
        t_0' \in [-M^{-192},t_0], \qquad -t_0 \leq M^{-200}(-t_0') \qquad \implies \int\limits_{B(0,M^{96}(-t_0')^\frac{1}{2})}|\omega(x,t_0')|^2\;dx > \frac{M^{-88}}{(-t_0')^\frac{1}{2}}.
    \end{align}
    Using this, we aim to find an explicit upper bound on $t_0$, in which case we may select an explicit time where the contrapositive of (\ref{vorticity concentrates}) holds, hence we may apply Proposition \ref{Backwards prop of vort conc} to gain the claimed quantitative estimate.
    \\ \textbf{Step 1: Quantitative unique continuation.}
    We prove the following claim:
    \begin{Claim} \label{Step 1 claim}
        For all \  $2M^{200}(-t_0) < T_1 < M^{-192}$ and $M^{100}\left(\frac{T_1}{2}\right)^\frac{1}{2}\leq R \leq e^{M^{608}}T_1^\frac{1}{2}$, we have:
        \begin{align}
            (T_1)^\frac{1}{2}e^{\frac{-CM^{129}R^2}{T_1}} \lesssim  \int\limits_{-T_1}^{-\frac{T_1}{2}}\int_{B_{2R}(0)\setminus B_{\frac{R}{2}}(0)}|\omega(x,s)|^2\;dxds.
        \end{align}
    \end{Claim}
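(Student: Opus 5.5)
This follows Step 1 of \cite[Proposition 2]{Spatialconc}, as adapted in \cite{Localizedblowup}, with the forcing term in Proposition \ref{SecondCarlemanInequality} kept under control. The plan is to run the quantitative unique continuation inequality backwards in time from an epoch of regularity. The vorticity obeys $\partial_t\omega-\Delta\omega=\omega\cdot\nabla v-v\cdot\nabla\omega+\nabla\times F$.

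\textbf{Setting up the Carleman inequality.} Apply Proposition \ref{Epoch of regularity} on $(-T_1,0)$ to get an epoch $I'=[t_3-T_3,t_3]\subset(-T_1,-T_1/2)$ with $T_3=M^{-28}T_1/2$. The smallness hypothesis \eqref{epochassumpt} holds because $\|F\|\le e^{-e^{M^{609}}}$ and $T_1\le 1$. On $\R^3\times I'$ we have $|v|\lesssim M^{-1}T_3^{-1/2}$ and $|\nabla v|\lesssim M^{-1}T_3^{-1}$. Set $w(x,t):=\omega(x_0+x,t_3-t)$ for $t\in[0,T_3]$, where $x_0$ with $|x_0|\sim R$ is chosen as below. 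Then $w$ satisfies the hypothesis of Proposition \ref{SecondCarlemanInequality} with $C_{Carl}\sim 1$ and $G=\nabla\times F$. We take $r\sim R$, so $r^2\ge 4000T_3$ holds since $R\ge M^{100}(T_1/2)^{1/2}$.

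\textbf{Lower bound from concentration.} Pick $t_0'\in[-M^{-192},t_0]$ with $-t_0'$ comparable to $T_3$ from below, e.g.\ $-t_0'\sim M^{-30}T_1$. The interval $(t_3-T_3,t_3)$ lies near $-T_1$, so this scale is compatible with $-t_0\le M^{-200}(-t_0')$ because $T_1>2M^{200}(-t_0)$. Then \eqref{backwardprop2} gives mass $\gtrsim M^{-88}(-t_0')^{-1/2}$ of $|\omega|^2$ in $B(0,M^{96}(-t_0')^{1/2})$. By pigeonholing over the time slab, and using the epoch bound on $\nabla v$ to move between nearby times, this produces a lower bound on $Z$ for Carleman parameters $t_0^{C}\sim t_1^{C}\sim M^{O(1)}T_3$ centred at $x_0=0$. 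I then re-centre at $x_0$ with $|x_0|=R$ and $r=2R$, so that the ball $|x|\le r/2$ covers the concentration region. The Gaussian weight $e^{-|x|^2/4t}$ costs a factor $e^{-CR^2/T_3}=e^{-CM^{28}R^2/T_1}$, and the parameters inflate this to $e^{-CM^{129}R^2/T_1}$. The $X$-term is controlled by \eqref{EpochCacciop}: $X\lesssim M^{19}RT_1^{-1}$, multiplied by $e^{-r^2/(500t_0)}$, which is tiny since $R\ge M^{100}T_1^{1/2}$. Hence $Z$ is bounded by the $Y$-term plus the $G$-term. The $Y$-term is essentially $\int|\omega(x,t_3)|^2$ over the ball; the time-averaging step (replacing $t_3$ by any time in the slab) lets us bound it by $\int_{-T_1}^{-T_1/2}\int_{B_{2R}\setminus B_{R/2}}|\omega|^2$ after restricting $x_0$ and the Gaussian localisation. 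Here the Gaussian tails outside the annulus are absorbed by the same $X$-type bounds.

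\textbf{Main obstacle: the forcing term.} This term is bounded by
\[
\Big(\tfrac{t_0}{t_1}\Big)^{3/2}\Big(\tfrac{6et_0}{t_1}\Big)^{r^2/(400t_0)}\,T\,\|\nabla F\|_{L^2}^2 \;\lesssim\; \exp\!\big(CM^{O(1)}R^2/T_1\big)\,e^{-2e^{M^{609}}}.
\]
With $R\le e^{M^{608}}T_1^{1/2}$, we have $R^2/T_1\le e^{2M^{608}}$, so the amplification is at most $\exp(M^{O(1)}e^{2M^{608}})$. This is far below $e^{e^{M^{609}}}$ for large $M$, so the forcing contribution is negligible next to the Gaussian lower bound $T_1^{1/2}e^{-CM^{129}R^2/T_1}$. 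The delicate part is bookkeeping exponents so that the chosen $t_0^C/t_1^C$ ratio yields exactly the $M^{129}$ power and that all the rescaled powers of $T_1$ match. Rearranging then gives the claim.
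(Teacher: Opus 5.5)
Your overall architecture matches the paper: epoch of regularity, then the second Carleman inequality centred at a far point, $X$ via \eqref{EpochCacciop}, $Y$ split into an annular main part plus Gaussian tails, and finally integration over the starting time. Your treatment of the forcing term is also correct. However, the lower bound on $Z$ does not work as written.

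\textbf{The lower bound on $Z$.} You take a concentration time $t_0'$ with $-t_0'\sim M^{-30}T_1$. Then \eqref{backwardprop2} would require $-t_0\le M^{-230}T_1$, which does not follow from $T_1>2M^{200}(-t_0)$. In any case $t_0'$ lies far from the epoch $I'\subset[-T_1,-T_1/2]$, so there is no regularity available to ``move'' concentration from $t_0'$ into the Carleman window. The correct, and simpler, observation is that every $s\in[-T_1,-T_1/2]$ satisfies $M^{200}(-t_0)<T_1/2\le -s\le T_1<M^{-192}$. Hence \eqref{backwardprop2} holds at every time of the Carleman window, at scale $(-s)^{1/2}\sim T_1^{1/2}$ (not $T_3^{1/2}$), and no pigeonholing is needed. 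The bound must also be computed for the cylinder centred at $x_0$ from the start, not at $0$ and then ``re-centred''. This requires $B(0,M^{96}T_1^{1/2})\subset B_{r/2}(x_0)$, which fails for your choice $|x_0|=R$, $r=2R$, since then the origin lies on $\partial B_{r/2}(x_0)$.

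\textbf{The parameter choices.} The choices you state would not let the error terms be absorbed. With $r=2R$, the $X$-term carries the factor $e^{-r^2/(500t_0)}=e^{-R^2/(125t_0)}$. The Gaussian weight in $Z$, at distance about $R$ and times $t\le 2t_0$, gives a lower bound no better than roughly $e^{-R^2/(8t_0)}$. So the $X$-term is not negligible relative to $Z$; being tiny in absolute terms is not enough. You need $r^2/500$ to dominate $|x_0|^2/4$ with room to spare, and the paper takes $r_1=M^{50}|x_1|$.

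Likewise, the part of $Y$ outside $B_{|x_0|/2}(x_0)$ carries $e^{-|x_0|^2/(16t_1)}$, multiplied by the amplification $(6et_0/t_1)^{r^2/(400t_0)}$. If $t_1\sim t_0$ and $r$ is large enough to absorb $X$, the amplification wins and this tail cannot be absorbed. One needs $t_0/t_1$ large in terms of $r/|x_0|$; the paper takes $t_0=T_1''/20000$ and $t_1=M^{-150}T_1''$. The tail is then controlled at the single time $s_1$ by the pointwise epoch bound $\|\nabla v\|_{L^\infty(\R^3\times I')}\lesssim M^{-1}(T_1')^{-1}$, not by a space-time $X$-type bound.

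These two choices are also exactly where the exponent comes from. The amplification is $\exp\bigl(C\log(t_0/t_1)\,r_1^2/t_0\bigr)\le \exp\bigl(CM^{101}|x_1|^2/T_1''\bigr)$, and $T_1''\sim M^{-28}T_1$, giving $M^{129}=M^{101}\cdot M^{28}$. So the ``bookkeeping'' you defer is the substance of this step, and the values you propose for it are incompatible with closing the argument.
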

    \begin{proof}
        Step 1 closely follows ``Step 1: quantitative unique continuation" of \cite{Spatialconc}, with the additional forcing terms carefully handled as described below \eqref{Quant Bound}. Denote the interval $I_1:= [-T_1, -\frac{T_1}{2}] \subset [-1,0].$ By the assumption on $T_1$, observe that (\ref{backwardprop2}) is satisfied for all $s\in I_1.$

        Now, let us apply the epoch of regularity Proposition \ref{Epoch of regularity} to $I_1$: there exists a closed interval $I_1' \subset I_1$
        such that $$I_1' = [a-T_1',a] \quad \text{with} \quad T_1' = M^{-28}\frac{T_1}{2},$$ where we have the estimates:
        \begin{align} \label{regularity for Epoch}
            \norm{\nabla^k v}{L^\infty(\R^3\times I')} \lesssim \frac{1}{M}(T_1')^{-\frac{k+1}{2}} \quad \text{for} \quad k=0,1,
        \end{align}
        and for $r\geq T_1^\frac{1}{2}$, $x_0\in \R^3,$
        \begin{align} \label{vorticity estim for epoch}
            \int_{I_1'}\int_{B_r(x_0)} \frac{|\omega|^2}{T_1'}+ |\nabla\omega|^2\;dxds \lesssim M^{19}r (T_1')^{-1}.
        \end{align}
        Let $T_1'' := \frac{3}{4}T_1'$ and $s_1\in [a-\frac{T_1'}{4},a].$
        Observe that $[s_1-T_1'',s_1] \subset I'.$
        Now let $x_1 \in \R^3$ such that 
        \begin{align}
            M^{100}\left(\frac{T_1}{2}\right)^\frac{1}{2} \leq |x_1| \leq e^{M^{608}}T_1^\frac{1}{2}, \label{x_1range}
        \end{align}
        and set $r_1 := M^{50}|x_1|.$ Observe that by this and our assumption on $T_1,$ 
        \begin{align}
            r_1 \geq M^{150}\left(\frac{T_1}{2}\right)^\frac{1}{2} \geq M^{250}\left(-t_0\right)^\frac{1}{2},
        \end{align}
        and for $M$ sufficiently large,
        \begin{align}
            r_1^2 \geq M^{300}\left(\frac{T_1}{2}\right) = M^{328}T_1' = M^{328}\left(\frac{4T_1''}{3}\right) \geq 4000T_1''.
        \end{align}
        Consider the function $w(x,t):= \omega(x_1+x,s_1-t)$ on the space-time cylinder $C_1:=\left\lbrace t\in[0,T_1''], |x|<r_1\right\rbrace.$ Using the vorticity equation satisfied by $\omega $ and (\ref{regularity for Epoch}), we observe that $w$ satisfies the differential inequality for $M$ sufficiently large on $C_1$:
        \begin{align}
            |\partial_tw+\Delta w|(x,t)\leq \frac{|w(x,t)|}{T_1''} +\frac{|\nabla w(x,t)|}{(T_1'')^\frac{1}{2}} +|\nabla\times F(x_1+x,s_1-t)|.
        \end{align}
        We may then apply the second Carleman inequality (Proposition \ref{SecondCarlemanInequality}), with $C_{Carl} = 1,$ $r=r_1$, $T=T_1''$, $t_0= \frac{T_1''}{20000}$, $t_1  = M^{-150}T_1''.$ Then:
        \begin{align}
            Z_1 &\lesssim e^{-\frac{r_1^2}{500t_0}}X_1 + t_0^\frac{3}{2}\left(\frac{6et_0}{t_1} \right)^{\frac{r_1^2}{400t_0}}Y_1 +T_1''\left(\frac{t_0}{t_1}\right)^\frac{3}{2}\left(\frac{6et_0}{t_1} \right)^{\frac{r_1^2}{400t_0}}\int_0^{T_1''}\int_{|x|\leq r_1} |\nabla \times F(x_1+x,s_1-s)|^2 dxds ,\label{unsimplifiedcarl1}
        \end{align}
        where
        \begin{align}
            &X_1 := \int\limits_{s_1-\frac{T_1''}{4}}^{s_1}\int_{B_{r_1}(x_1)} \frac{|\omega(x,s)|^2}{T_1''} + |\nabla\omega(x,s)|^2 \;dxds,  \qquad
            Y_1 := \int_{B_{r_1}(x_1)}|\omega(x,s_1)|^2(t_1)^{-\frac{3}{2}}e^{-\frac{|x_1-x|^2}{4t_1}}\;dx,
    \\
            &Z_1 := \int\limits_{s_1-\frac{T_1''}{10000}}^{s_1-\frac{T_1''}{20000}}\int_{B_{\frac{r_1}{2}}(x_1)} \left(\frac{|\omega(x,s)|^2}{T_1''} + |\nabla\omega(x,s)|^2  \right)e^{-\frac{|x_1-x|^2}{4(s_1-s)}}\;dxds.
        \end{align}
        First, by using the same arguments as \cite[pp 743-744]{Spatialconc} (with different indices), we find a lower bound of $Z_1$. Indeed, using $B_{M^{96}(-s)^\frac{1}{2}}(0) \subset B_{\frac{r_1}{2}}(x_1)$ and that the backward propagation of vorticity \eqref{backwardprop2} is satisfied for all $s\in I_1'$, we obtain
        \begin{align}
            Z_1 \gtrsim  \frac{M^{-102}}{(T_1'')^\frac{1}{2}}e^{-\frac{C|x_1|^2}{T_1''}}. \label{Ztermcarl1}
        \end{align}
        Next, using the same arguments as \cite[pp 744-745]{Spatialconc} (with different indices), we find an upper bound of $Y_1$, using (\ref{regularity for Epoch}) and the properties of $x_1$ and $r_1$ (in particular, $r_1^3(T_1'')^{-\frac{3}{2}} \lesssim e^\frac{r_1^2}{T_1''} = e^\frac{M^{100}|x_1|^2}{T_1''}$). We obtain that for $M$ sufficiently large,
        \begin{align}
            t_0^\frac{3}{2}\left(\frac{6et_0}{t_1} \right)^{\frac{r_1^2}{400t_0}}Y_1  &\lesssim M^{225}e^{\frac{CM^{101}|x_1|^2}{T_1''}}\int_{B_{\frac{|x_1|}{2}}(x_1)}|\omega(x,s_1)|^2\;dx + M^{225}\frac{1}{(T_1'')^\frac{1}{2}}e^{-\frac{CM^{150}|x_1|^2}{T_1''}}.  \label{Ytermcarl1}
        \end{align}       
        Next, we use the Caccioppoli estimate (\ref{vorticity estim for epoch}) on the vorticity in the epoch of regularity to gain an upper bound on the $X_1$ term (we also use $r_1(T_1'')^{-\frac{1}{2}} \lesssim e^\frac{r_1^2}{T_1''} = e^\frac{M^{100}|x_1|^2}{T_1''}$):
        \begin{align}
            e^{-\frac{r_1^2}{500t_0}}X_1 &\lesssim e^{-\frac{40r_1^2}{T_1''}}r_1M^{19}(T_1'')^{-1}
            \lesssim e^{-\frac{40r_1^2}{T_1''}}M^{19}e^{\frac{r_1^2}{T_1''}}(T_1'')^{-\frac{1}{2}} \lesssim e^{-\frac{CM^{100}|x_1|^2}{T_1''}} (T_1'')^{-\frac{1}{2}}. \label{Xtermcarl1}
        \end{align}
        Finally, we estimate the forcing term from above. Crucially, we show that it is a lower order term for the upper bound on $|x_1|$ in \eqref{x_1range}. In particular:
        \begin{align}
            &T_1''\left(\frac{t_0}{t_1}\right)^\frac{3}{2}\left(\frac{6et_0}{t_1} \right)^{\frac{r_1^2}{400t_0}}\int_0^{T_1''}\int_{|x|\leq r_1} |\nabla \times F(x_1+x,s_1-s)|^2 dxds 
            \\&\lesssim T_1''M^{225}\left(\frac{6eM^{150}}{20000} \right)^{\frac{50M^{100}|x_1|^2}{T_1''}}\norm{F}{L^2_t\dot{H}^{1}_x(\R^3\times(-1,0))}^2
            \lesssim (T_1'')^{-\frac{1}{2}}e^{-\frac{CM^{101}|x_1|^2}{T_1''}}. \label{forcingtermcarl1}
        \end{align}
        In \eqref{forcingtermcarl1}, we used that $|x_1| \leq e^{M^{608}}T_1^\frac{1}{2}$ hence $M^{225}T_1''e^{-2e^{M^{609}}}\lesssim e^{-\frac{CM^{101|x_1|^2}}{T_1''}} \left(\frac{6eM^{150}}{20000} \right)^{-\frac{50M^{100}|x_1|^2}{T_1''}}(T_1'')^{-\frac{1}{2}}$.
        \\Substituting (\ref{Ztermcarl1})-\eqref{Ytermcarl1} and (\ref{forcingtermcarl1}) into the Carleman inequality (\ref{unsimplifiedcarl1}) yields: 
        \begin{align}
            \frac{M^{-102}}{(T_1'')^\frac{1}{2}}e^{-\frac{C|x_1|^2}{T_1''}} &\lesssim e^{-\frac{CM^{100}|x_1|^2}{T_1''}}(T_1'')^{-\frac{1}{2}} +  M^{225}e^{\frac{CM^{101}|x_1|^2}{T_1''}}\int_{B_{\frac{|x_1|}{2}}(x_1)}|\omega(x,s_1)|^2\;dx \\&\quad+ M^{225}\frac{1}{(T_1'')^\frac{1}{2}}e^{-\frac{CM^{150}|x_1|^2}{T_1''}} + e^{-\frac{CM^{101}|x_1|^2}{T_1''}}(T_1'')^{-\frac{1}{2}}.
        \end{align}
        Hence, using \eqref{x_1range} we may absorb all but the second term into the left-hand side. Writing $R:=|x_1|$ using $B_{\frac{R}{2}}(x_1) \subset B_{2R}(0)\setminus B_{\frac{R}{2}}(0) $ and subsequently integrating over $[a-\frac{T_1'}{4},a]$ gives: 
        \begin{align}
            T_1^\frac{1}{2}e^{\frac{-CM^{129}R^2}{T_1}} \lesssim  \int\limits_{a-\frac{T_1'}{4}}^{a}\int_{B_{2R}(0)\setminus B_{\frac{R}{2}}(0)}|\omega(x,s)|^2\;dxds.
        \end{align}
        This is what was claimed, observing that $[a-\frac{T_1'}{4},a] \subset [-T_1, -\frac{T_1}{2}].$
    \end{proof}
    \begin{Claim} \label{Step 2 and 3 claim}
        For all $8M^{400}(-t_0)<T_2 \leq 1$,
        we have \begin{equation}
            T_2^{-\frac{1}{2}}\exp\left(-\exp\left(M^{608}\right)\right) \lesssim \int\limits_{B_{\frac{3M^{600}\hat{R}}{16}}\setminus B_{2\hat{R}}}|\omega(x,0)|^2\;dx
        \end{equation}
        where $\hat{R}$ is defined below in Step 2.
    \end{Claim}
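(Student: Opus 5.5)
Following Steps 2 and 3 of \cite{Spatialconc} (Proposition 2), I will first select an annulus of regularity, then apply the first Carleman inequality (Proposition \ref{FirstCarlemanInequality}) to carry the lower bound of Claim \ref{Step 1 claim} from the time interval near $-T_2$ up to a thin time layer near $0$. I will then use the second Carleman inequality (Proposition \ref{SecondCarlemanInequality}) once more to reach the time slice $t=0$.

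\textbf{Step 2 (annulus of regularity).} I apply Proposition \ref{ann of reg M version} on $\R^3\times[-T_2,0)$ with $T=T_2$, $R=MT_2^{1/2}$ and $\mu=200$. The smallness $\norm{F}{L^2\cap L^6}\le e^{-e^{M^{609}}}\le M^{-400}$ is guaranteed by \eqref{Quant Assumps}. This produces $\hat R\in[2MT_2^{1/2},MT_2^{1/2}\exp(M^{607})]$, and on $A=\{\hat R<|x|<M^{600}\hat R/4\}\times(-T_2/16,0)$ it gives $|\nabla^k v|\lesssim M^{-100}T_2^{-(k+1)/2}$. From the vorticity equation, $w(x,t)=\omega(x,-t)$ then satisfies \eqref{diffineq1} there with $C_{Carl}$ of size $M^{c}$ and $G=\nabla\times F$. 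The Caccioppoli bound \eqref{smalltimeestim1} controls $\int|\omega|^2/T+|\nabla\omega|^2$ on dyadic subannuli of $A$ near $t=0$. I will use it to bound the $X$-terms of both Carleman inequalities, in place of the $L^\infty$ bounds on $\nabla\omega$ used in \cite{Spatialconc}.

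\textbf{Step 3 (first Carleman, bringing the Gaussian lower bound forward).} I apply Proposition \ref{FirstCarlemanInequality} to $w$ on $\{r\le|x|\le R'\}\times[0,T_3]$, where $r\sim 4\hat R$, $R'\sim M^{600}\hat R/16$, and $T_3=T_2/C$ is chosen with $r^2\ge 4C_{Carl}T_3$. For the lower bound on $Z$, I pick a scale $R\in[20r,R'/4]$ with $R\sim M^{c}\hat R$. Claim \ref{Step 1 claim}, applied with $T_1\sim T_3$, then gives $Z\gtrsim T_3^{-1/2}e^{-CM^{129}R^2/T_3}$; this is allowed because $R\le e^{M^{608}}T_1^{1/2}$ follows from the range of $\hat R$. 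For the upper bound, $X$ is bounded by $e^{2R'^2/(C_{Carl}T_3)}$ times the Caccioppoli/energy bounds. The prefactor $e^{-rR'/(4C_{Carl}T_3)}$ makes $e^{-rR'/\dots}X$ negligible, because $R'/r\sim M^{600}$ dominates the Gaussian exponent from Claim \ref{Step 1 claim}. The forcing term is at most $T_3e^{3R'^2/(2C_{Carl}T_3)}\norm{\nabla\times F}{L^2}^2$. Since $R'^2/T_3\lesssim M^{1200}e^{2M^{607}}\ll e^{M^{609}}$, the factor $e^{-2e^{M^{609}}}$ kills it. What remains is
$$\int_{r\le|x|\le R'}|\omega(x,0)|^2dx\gtrsim e^{-2R'^2/(C_{Carl}T_3)}\,T_3^{-1/2}e^{-CM^{c}R'^2/T_3},$$
and inserting $R'\lesssim M^{601}T_2^{1/2}e^{M^{607}}$ turns the right-hand side into $T_2^{-1/2}\exp(-\exp(M^{608}))$. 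The region of integration lies inside $B_{3M^{600}\hat R/16}\setminus B_{2\hat R}$, which is the claimed bound.

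If it turns out that Proposition \ref{FirstCarlemanInequality} only yields a bound on an integral over a time layer rather than at $t=0$ itself, I will instead pass from that time-layer bound to the slice $t=0$ with one application of Proposition \ref{SecondCarlemanInequality} centred at points of the annulus. The $X$ term there is again bounded by \eqref{smalltimeestim1}, and the forcing term by the same double-exponential smallness.

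\textbf{Main obstacle.} The delicate point is the bookkeeping of exponents. The forcing amplification $e^{CR'^2/T}$ must stay below $e^{e^{M^{609}}}$ while the Gaussian loss stays above $\exp(-\exp(M^{608}))$. Separately, the $X$-term estimates must close using only the $L^2$ Caccioppoli bounds, with the factor $(t/T)^{1/12}$ in \eqref{smalltimeestim1} absorbing the losses. I would verify these constraints first and, if needed, adjust $\mu$, $C_{Carl}=M^{c}$ and the choice of $T_3$.
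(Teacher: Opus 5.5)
\textbf{Gap: the $X$ term of the first Carleman inequality cannot be absorbed.} Your bound for the $Y$ and forcing terms is fine, but the step where you absorb $X$ in Proposition \ref{FirstCarlemanInequality} fails. There $X$ carries the weight $e^{2|x|^2/(C_{Carl}T)}$ all the way out to $|x|=R'$. So annulus-of-regularity or Caccioppoli bounds (including \eqref{smalltimeestim1}) only give
$e^{-rR'/(4C_{Carl}T_3)}X\lesssim e^{(2R'^2-rR'/4)/(C_{Carl}T_3)}$ times polynomial factors.

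Since $r<R'$, this exponent is positive. With the paper's choices ($r=4\hat R$, $R'=M^{600}\hat R/16$, $T=M^{-200}T_2$, $C_{Carl}=M^{200}$) it is about $M^{1200}\hat R^2/(128T_2)$. By contrast, the lower bound for $Z$ from Claim \ref{Step 1 claim} is only of size $T_2^{-1/2}e^{-CM^{329}\hat R^2/T_2}$. The gain $e^{rR'/(4C_{Carl}T)}=e^{M^{600}\hat R^2/(16T_2)}$ does beat the Gaussian loss from Claim \ref{Step 1 claim}. It cannot beat the weight inside $X$, and no adjustment of $C_{Carl}$, $T_3$ or the radii changes this.

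So the first Carleman inequality only yields a dichotomy:
\begin{itemize}
\item[(b)] $Y_2\gtrsim T_2^{-1/2}e^{-CM^{1200}\hat R^2/T_2}$. This is the case your argument handles, and there it does give the claim.
\item[(a)] $X_2\gtrsim T_2^{-1/2}e^{CM^{600}\hat R^2/T_2}$. This case cannot be ruled out.
\end{itemize}
Separately, $T_3=T_2/C$ with $C$ universal is not allowed: Claim \ref{Step 1 claim} needs $T_1<M^{-192}$ for every $T_2\le 1$. This is why the paper takes $T=M^{-200}T_2$ and $C_{Carl}=M^{200}$.

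\textbf{What case (a) requires.} Case (a) is where most of the work lies, and your contingency paragraph does not supply it. It is triggered by the wrong worry: the $Y$ term of Proposition \ref{FirstCarlemanInequality} already lives at $t=0$. It also does not say where a time-layer lower bound comes from, or why it stays away from $t=0$. The paper proceeds as follows.
\begin{enumerate}
\item Pigeonhole the weighted $X_2$ over dyadic annuli to find $R_3\in[8\hat R,M^{600}\hat R/8]$ with $\int_{-T_2'}^0\int_{B_{R_3}\setminus B_{R_3/2}}\bigl(|\omega|^2/T_2+|\nabla\omega|^2\bigr)\gtrsim T_2^{-1/2}e^{-2R_3^2/T_2}$.
\item Apply \eqref{smalltimeestim1} with $t=e^{-60R_3^2/T_2}T_2$. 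This is where the factor $(t/T)^{1/12}$ and the $L^6$ smallness of $F$ enter. It shows the layer $(-e^{-60R_3^2/T_2}T_2,0)$ carries at most $T_2^{-1/2}e^{-3R_3^2/T_2}$, so the mass sits at times $-t_3\in(\tfrac12 e^{-60R_3^2/T_2}T_2,\,T_2')$.
\item Pigeonhole in time and then in space. Only about $e^{91R_3^2/T_2}$ balls of radius $(-t_3)^{1/2}$ are needed, precisely because of the lower bound on $-t_3$. This gives $x_3$ with $\int_{2t_3}^{t_3}\int_{B_{(-t_3)^{1/2}}(x_3)}\bigl(|\omega|^2/T_2+|\nabla\omega|^2\bigr)\gtrsim T_2^{-1/2}e^{-100R_3^2/T_2}$.
\item Only now apply Proposition \ref{SecondCarlemanInequality} to $\omega(x+x_3,-t)$, with $T_3=-20000t_3$, $t_0=t_1=-t_3$ and $r_3=1000R_3(-t_3/T_2)^{1/2}$. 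Then $(6et_0/t_1)^{r_3^2/(400t_0)}=(6e)^{2500R_3^2/T_2}$ costs only $e^{CR_3^2/T_2}$. The $X_3$ term is absorbed using Proposition \ref{CaccioppoliBall} together with the lower bound on $-t_3$, and the forcing terms are absorbed as well.
\end{enumerate}
This yields the required lower bound on $\int_{B_{r_3}(x_3)}|\omega(x,0)|^2\,dx$, with $B_{r_3}(x_3)\subset B_{3M^{600}\hat R/16}\setminus B_{2\hat R}$. Without this second route, your proof covers only case (b).
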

    \begin{proof}
    \textbf{Step 2: Quantitative backward uniqueness.} \quad Step 2 follows ``Step 2: quantitative backwards uniqueness" of \cite{Spatialconc}, with the additional forcing terms carefully handled as described below \eqref{Quant Bound}. We apply the existence of the annulus of regularity (Proposition \ref{ann of reg M version}) with the first Carleman inequality (Proposition \ref{FirstCarlemanInequality}). 
    \\Let $T_2' := \frac{T_2}{M^{200}}$ and let $R_2 := MT_2^\frac{1}{2}$, where $M$ is sufficiently large. By Proposition \ref{ann of reg M version} (with $\mu=200$), there exists an annulus of regularity \begin{align} A_2 = \left\lbrace \hat{R} < |x| <\frac{M^{600}\hat{R}}{4} \right\rbrace \quad\text{where}\quad 
        \hat{R}=\hat{R}(v,p,F,R) \in [2R_2,R_2\exp{\left( M^{607}\right)}]\end{align}
    such that, 
    for $k=0,1:$
    \begin{align}\label{annulus of reg step 2}         \norm{\nabla^kv}{L^\infty\left(A_2\times (-\frac{T_2}{16},0)\right)} &\lesssim M^{-200}T_2^{-\frac{k+1}{2}}, &        \norm{\omega}{C^{0,\frac{1}{6}}_\text{par}\left(A_2\times (-\frac{T_2}{16},0)\right)} &\lesssim M^{-200}T_2^{-\frac{13}{12}}   .
    \end{align} 
    Moreover, if $0<t<\frac{T_2}{16}$ and $r\geq T_2^\frac{1}{2}$ such $B_r\setminus B_\frac{r}{2}\subset B_{2r}\setminus B_\frac{r}{4}\subseteq A$, then
 \begin{align}
        \int_{-t}^0\int_{B_r\setminus B_\frac{r}{2}} \frac{|\omega|^2}{T_2}+|\nabla\omega|^2\;dxds &\lesssim M^{-400}\left(\frac{t}{T_2}\right)^\frac{1}{12}\left(\frac{r^2}{T_2}\right)^\frac{3}{2}T_2^{-\frac{1}{2}}+ t^\frac{2}{3}r^2 \norm{F}{L^6(\R^3\times(-T_2,0))}^2. \label{annofregL2gradvort}
    \end{align}
    Now, denote \begin{align}\mathcal{A}_2:= \big\lbrace 4\hat{R} < |x| < \frac{M^{600}\hat{R}}{16} \big\rbrace \times (-T_2',0) \subset A_2\times (-\frac{T_2}{16}, 0).\end{align}
    \\ We wish to apply the first Carleman inequality to $w(x,t):= \omega(x,-t)$, with $``r=4\hat{R},\; R=\frac{M^{600}}{16}\hat{R},\; T=T_2',\; C_{carl}= M^{200}."$ Indeed, we note that $r^2 = 16 \hat{R}^2 \geq 64R_2^2 \geq 64T_2 = 64M^{200} T_2' \geq 4C_{carl}T_2'.$
    Using the vorticity equation and the estimates 
    (\ref{annulus of reg step 2}), we also observe that the differential inequality holds on $\mathcal{A}_2$:
    \begin{align}
            |\partial_tw+\Delta w|(x,t) \leq \frac{|w(x,t)|}{M^{200} T_2'} +\frac{|\nabla w(x,t)|}{(M^{200} T_2')^\frac{1}{2}} + |\nabla\times F(x,-t)|.
        \end{align}
    Applying the first Carleman inequality (Proposition \ref{FirstCarlemanInequality}) then gives: 
    \begin{equation}
        \int\limits^0_{-\frac{T_2}{4M^{200}}}\int\limits_{40\hat{R}\leq |x|\leq \frac{M^{600}\hat{R}}{32}} \frac{|\omega|^2}{T_2'} \;dxdt \lesssim M^{600}e^{-\frac{M^{600}\hat{R}^2}{16 T_2}}\left( X_2  + e^\frac{M^{1200}\hat{R}^2}{128T_2}Y_2  + T_2e^\frac{3M^{1200}\hat{R}^2}{512T_2}\norm{\nabla\times F}{L^2(\R^3\times(-1,0))}^2\right), \label{unsimplifiedCarl2}
    \end{equation}
    where 
    \begin{align}
        X_2 &:= \int_{-\frac{T_2}{M^{200}}}^0\int_{4\hat{R}\leq |x|\leq \frac{M^{600}\hat{R}}{16}} e^\frac{2|x|^2}{T_2}\left( \frac{|\omega|^2}{T_2'} + |\nabla \omega|^2 \right) \;dxdt ,
        & Y_2 &:= \int\limits_{4\hat{R}\leq |x|\leq \frac{M^{600}\hat{R}}{16}} |\omega(x,0)|^2\;dx.
    \end{align}
    Similarly to \cite{Spatialconc}, we now apply Claim \ref{Step 1 claim} from Step 1 to gain a lower bound on the left-hand-side of (\ref{unsimplifiedCarl2}).
    Indeed, let $T_1:= \frac{T_2'}{4} = \frac{M^{-200}T_2}{4}.$
    By our assumption on $T_2,$ we have $2M^{200}(-t_0)< T_1 < M^{-192}.$ We also let $R^* := 80\hat{R}$ so that, for $M$ sufficiently large, we have
    \begin{align}
        R^*\geq 160R_2 \geq 160T_2^\frac{1}{2} = 320M^{100}T_1^\frac{1}{2} \geq M^{100}\left(\frac{T_1}{2}\right)^\frac{1}{2},
    \\
        \text{and}\quad R^* \leq 2M^{101}T_1^\frac{1}{2}e^{
        M^{607}} \leq T_1^\frac{1}{2}e^{M^{608}}.
    \end{align}
    These conditions allow us to apply Claim \ref{Step 1 claim} from Step 1 - this gives:
    \begin{align}
            M^{-100}T_2^\frac{1}{2}e^{\frac{-CM^{329}(R^*)^2}{T_2}} \lesssim  \int\limits_{-\frac{T_2}{4M^{200}}}^{-\frac{T_2}{8M^{200}}}\int_{B_{2R^*}(0)\setminus B_{\frac{R^*}{2}}(0)}|\omega(x,s)|^2\;dxds.
    \end{align}
    Similarly to \cite[Step 2]{Spatialconc}, using the Carleman inequality (\ref{unsimplifiedCarl2}), and using that $\hat{R}\geq 2MT_2^\frac{1}{2}$, we see that for $M$ sufficiently large 
    \begin{align}
            T_2^{-\frac{1}{2}}e^{\frac{CM^{600}\hat{R}^2}{T_2}}  &\lesssim  X_2  + e^\frac{M^{1200}\hat{R}^2}{128T_2}Y_2  + T_2e^\frac{3M^{1200}\hat{R}^2}{512T_2}\norm{F}{L^2_t\dot{H}^1_x(\R^3\times(-1,0))}^2.\label{calemaforceabsorb}
    \end{align}
    Now, noticing that 
    \begin{align} \norm{F}{L^2_t\dot{H}^1_x(\R^3\times(-1,0))}^2\leq e^{-e^{2M^{609}}}\leq e^{\frac{-CM^{1201}\hat{R}^2}{T_2}}(T_2)^{-\frac{3}{2}},\end{align}
    we may absorb the forcing term into the left-hand side of \eqref{calemaforceabsorb}. Therefore, at least one of the two following bounds occurs:
    \begin{align}
         &(a)\;X_2 \gtrsim T_2^{-\frac{1}{2}} e^{\frac{CM^{600}\hat{R}^2}{T_2}}, & &(b)\;  Y_2 \gtrsim e^\frac{-CM^{1200}\hat{R}^2}{T_2}T_2^{-\frac{1}{2}}.
            \nonumber
    \end{align}
    In the case of (b), we can use the estimate directly - simply rearrange and use the upper bound of $\hat{R}$ to obtain the claimed estimate.
    Finally, case (a) requires more work to complete the proof of this claim:
    \textbf{Step 3: Apply quantitative unique continuation again.}\quad Step 3 follows ``Step 3: a final application of quantitative unique continuation" in \cite{Spatialconc}, with the additional forcing terms carefully handled as described below \eqref{Quant Bound}.
    For this step, assume that we are in the case of (a).
    By the pigeonhole principle\footnote{Else, the opposite holds for all radii in this range; set $R^{(0)}=8\hat{R}$, $R^{(k)}=2R^{(k-1)}$ for $k\leq\floor{C\log{M}}$. Summing over these radii would contradict (a).}, there exists $8\hat{R}\leq R_3\leq \frac{M^{600}\hat{R}}{8}$ such that
    \begin{align}
        T_2^{-\frac{1}{2}}\lesssim \int_{-\frac{T_2}{M^{200}}}^0\int_{B_{R_3}\setminus B_{\frac{R_3}{2}}} e^{\frac{2|x|^2}{T_2}}\left(\frac{|\omega|^2}{T_2} + |\nabla\omega|^2\right)\;dxds.
    \end{align}
    Therefore:
    \begin{align} \label{subannulus by pigeonhole}
        T_2^{-\frac{1}{2}}\exp\left(-\frac{2R_3^2}{T_2} \right) \lesssim \int_{-\frac{T_2}{M^{200}}}^0\int_{B_{R_3}\setminus B_{\frac{R_3}{2}}} \frac{|\omega|^2}{T_2} + |\nabla\omega|^2\;dxds.
    \end{align}
    Now, observe that $B_{R_3}\setminus B_{\frac{R_3}{2}}\subset B_{2R_3}\setminus B_{\frac{R_3}{4}} \subset A_2,$ the annulus of regularity. Therefore, estimates (\ref{annulus of reg step 2}) and (\ref{annofregL2gradvort}) hold here; (\ref{annofregL2gradvort}) gives 
    \begin{align}
        \int_{-\exp{\left(-\frac{60R_3^2}{T_2}\right)}T_2}^0\int_{B_{R_3}\setminus B_{\frac{R_3}{2}}} \frac{|\omega|^2}{T_2}+|\nabla\omega|^2\;dxds &\lesssim \exp{\left(-\frac{5R_3^2}{T_2}\right)}\left(\frac{R_3^2}{T_2}\right)^\frac{3}{2}T_2^{-\frac{1}{2}}+ \exp{\left(-\frac{40R_3^2}{T_2}\right)}T_2^\frac{2}{3}R_3^2 \norm{F}{L^6(\R^3\times(-1,0))}^2.\label{smalltimeestimstep3}
    \end{align}
    Now, for $M$ sufficiently large, as $\norm{F}{L^6(\R^3\times(-1,0))}^2\lesssim e^{-2e^{M^{609}}} \lesssim \frac{T_2}{R_3^2}\lesssim\frac{1}{R_3^2}$, we have
    \begin{align}
        \int_{-\exp{\left(-\frac{60R_3^2}{T_2}\right)}T_2}^0\int_{B_{R_3}\setminus B_{\frac{R_3}{2}}} \frac{|\omega|^2}{T_2}+|\nabla\omega|^2\;dxds &\lesssim T_2^{-\frac{1}{2}}\exp\left(-\frac{3R_3^2}{T_2} \right).
    \end{align}
    Combining this observation with line (\ref{subannulus by pigeonhole}), we have 
    \begin{align}
        T_2^{-\frac{1}{2}}\exp\left(-\frac{2R_3^2}{T_2} \right) \lesssim \int_{-\frac{T_2}{M^{200}}}^{-\exp{\left(-\frac{60R_3^2}{T_2}\right)}T_2}\int_{B_{R_3}\setminus B_{\frac{R_3}{2}}} \frac{|\omega|^2}{T_2} + |\nabla\omega|^2\;dxds.
    \end{align}
    Next, by applying the pigeonhole principle twice again\footnote{For the spatial pigeonhole, consider covering $B_{R_3}\setminus B_{\frac{R_3}{2}}$ with $C\frac{R_3^3}{(-t_3)^\frac{3}{2}}\leq \exp(\frac{91R_3^2}{T_2})$ balls of radius $(-t_3)^\frac{1}{2}$.}, there exists a time \begin{align}\label{t3range}
        \frac{\exp{\left(-\frac{60R_3^2}{T_2}\right)}T_2}{2}<-t_3 < \frac{T_2}{M^{200}},
    \end{align} and there exists a point $x_3 \in B_{R_3}\setminus B_{\frac{R_3}{2}}$ such that
    \begin{align}\label{pigeonholestimstep3}
        T_2^{-\frac{1}{2}}\exp\left(-\frac{100R_3^2}{T_2}\right) \lesssim \int_{2t_3}^{t_3}\int_{B_{\sqrt{-t_3}}(x_3)}\frac{|\omega|^2}{T_2} + |\nabla\omega|^2\;dxds.
    \end{align}
    This puts us in a position to apply the second Carleman inequality again, to the function $w:\R^3\times [0,-20000t_3]\to\R^3$ defined by $w(x,t):=\omega(x+x_3,-t).$
    We also take $T_3=-20000t_3$, $r_3:=1000R_3\left(\frac{-t_3}{T_2}\right)^\frac{1}{2},$ $\widehat{t}_3 = \widecheck{t}_3 = -t_3 .$
    We then check that for $M$ sufficiently large
    \begin{align}
     \label{r31estim}
        &\qquad r_3^2 = 10^6R_3^2\frac{(-t_3)}{T_2}\geq 256\cdot10^6M^2(-t_3)\geq 4000T_3,
    \\ \label{r32estim}
        \text{and}&\qquad \frac{r_3}{2}\geq 4000\hat{R}\left(\frac{-t_3}{T_2}\right)^\frac{1}{2}\geq 8000M(-t_3)^\frac{1}{2} > (-t_3)^\frac{1}{2},  
    \\ \label{x3 estim}
        \text{and}& \qquad \frac{|x_3|}{2}\geq \frac{R_3}{4}= \frac{r_3}{4000}\left(\frac{T_2}{-t_3}\right)^\frac{1}{2}\geq \frac{r_3M^{100}}{4000}\geq 2r_3 \geq r_3.  
    \end{align}
    Hence, by (\ref{r32estim})-(\ref{x3 estim}),
    \begin{align}\label{balnestingCarl3}
        B_{(-t_3)^\frac{1}{2}}(x_3) \subseteq B_{\frac{r_3}{2}}(x_3)  \subseteq B_{2r_3}(x_3) \subseteq B_{\frac{|x_3|}{2}}(x_3) \subseteq \Big\lbrace \frac{R_3}{4}<|y|<\frac{3R_3}{2}  \Big\rbrace \subseteq \Big\lbrace 8\hat{R}<|y|<\frac{M^{600}\hat{R}}{8}  \Big\rbrace.
    \end{align}
    We also have that $0\leq \widehat{t}_3 = \widecheck{t}_3=-t_3 \leq -2t_3 = \frac{T_3}{10^4}.$
    Also, for $M$ sufficiently large, 
       $T_3 = -20000t_3 \leq \frac{20000T_2}{M^{200}} \leq \frac{T_2}{16}, \label{T_2vsT_3}$ hence
    the annulus of regularity results (\ref{annulus of reg step 2})-(\ref{annofregL2gradvort}) hold within $B_{r_3}\times [0, T_3].$
    Finally, we observe that the following inequality holds in $B_{r_3}\times [0, T_3]$ with $C_{carl}=1$ for $M$ sufficiently large:
    \begin{align}
            |\partial_tw+\Delta w|(x,t) &\leq  \frac{|w(x,t)|}{ T_3} +\frac{|\nabla w(x,t)|}{(T_3)^\frac{1}{2}} + |\nabla\times F(x,-t)|.
        \end{align}
    With these conditions checked, we may apply the second Carleman inequality. Also using (\ref{pigeonholestimstep3}) (along with the fact that $T_2\geq T_3$) and the ball nesting (\ref{balnestingCarl3}), we obtain:
    \begin{align}\label{Carleman3rd}
        T_2^{-\frac{1}{2}}\exp\left(-\frac{100R_3^2}{T_2}\right)  \lesssim e^{\frac{r_3^2}{500t_3}}X_3+ (-t_3)^\frac{3}{2}(6e)^{-\frac{Cr_3^2}{t_3}}Y_3 +T_3\left(6e \right)^{-\frac{r_3^2}{400t_3}}\int^0_{-T_3}\int_{|x|\leq r_3} |\nabla \times F(x,s)|^2 dxds,
    \end{align}
    where:
    \begin{align}
        X_3 &= \int_{-\frac{T_3}{4}}^0\int_{B_{r_3}(x_3)}\frac{|\omega|^2}{T_3}+|\nabla\omega|^2\;dxds,
        & Y_3&= \int_{B_{r_3}(x_3)}|\omega(x,0)|^2(-t_3)^{-\frac{3}{2}}e^{\frac{|x-x_3|^2}{4t_3}}\;dx.
    \end{align}
    Next, we estimate the $X_3$ term from above using the Caccioppoli inequality (Proposition \ref{CaccioppoliBall}) using observation (\ref{balnestingCarl3}), (\ref{annulus of reg step 2}),  $T_3 \lesssim r_3^2$ by (\ref{r31estim}) and $T_3\leq T_2$ by (\ref{T_2vsT_3}). This gives:
    \begin{align}
        e^{\frac{r_3^2}{500t_3}}X_3 &\lesssim  e^{\frac{r_3^2}{500t_3}}\left(\left(\frac{1}{T_3}+\frac{1}{r_3^2}+\norm{v}{L^\infty(A_2\times (-\frac{T_2}{16},0))}^2\right)\int_{-2T_3}^{0}\int_{B_{2r_3}(x_3)}|\omega|^2\;dxds+\norm{F}{L^2((\R^3\times(-1,0)))}^2\right)
        \\ &\lesssim  e^{\frac{r_3^2}{500t_3}}\left( T_3^{-2}r_3^3+\norm{F}{L^2((\R^3\times(-1,0)))}^2 \right).\label{x3fromabove1}
    \end{align}
    We may then bound the first term above as follows, using the definition of $T_3=-20000t_3$ and the range of $t_3$ (\ref{t3range}):
    \begin{align}
        e^{\frac{r_3^2}{500t_3}}T_3^{-2}r_3^3 &\lesssim e^{\frac{r_3^2}{500t_3}}(-t_3)^{-2}r_3^3\lesssim e^{\frac{r_3^2}{500t_3}}\left(\frac{r_3^2}{-t_3}\right)^\frac{3}{2}(-t_3)^{-\frac{1}{2}}\lesssim e^{\frac{r_3^2}{1000t_3}} (-t_3)^{-\frac{1}{2}}
        \\ &\lesssim e^{\frac{r_3^2}{1000t_3}} T_2^{-\frac{1}{2}}e^{\frac{30R_3^2}{T_2}}\lesssim T_2^{-\frac{1}{2}}e^{\frac{-970R_3^2}{T_2}}\lesssim  T_2^{-\frac{1}{2}}e^{\frac{-100R_3^2}{T_2}}e^{\frac{-870R_3^2}{T_2}}
        \lesssim T_2^{-\frac{1}{2}}e^{\frac{-100R_3^2}{T_2}}e^{-870\cdot (16M)^2}. \label{x3fromabove2}
    \end{align} 
    Then for $M$ sufficiently large, combining lines (\ref{x3fromabove1})-(\ref{x3fromabove2}) shows that the term $e^{\frac{r_3^2}{500t_3}}T_3^{-2}r_3^3 $ may be absorbed into the left-hand side of line (\ref{Carleman3rd}). 
    Furthermore, we may also absorb the second term arising from the $X_3$ term into the left-hand side of line (\ref{Carleman3rd}). Indeed, since $R_3<\frac{M^{600}\hat{R}}{8}$ and $\hat{R}\leq MT^\frac{1}{2}e^{M^{607}}$, we have that for $M$ sufficiently large:
    \begin{align}
        \exp\left(-\frac{100R_3^2}{T_2}\right) \geq Me^{-e^{M^{609}}}.
    \end{align}
    It follows that
    \begin{align}
        e^{\frac{r_3^2}{500t_3}}\norm{F}{L^2((\R^3\times(-1,0)))}^2 &\lesssim \hat{C}_{2}e^{-2e^{M^{609}}} 
        \lesssim T_2^{-\frac{1}{2}}M^{-1}\exp\left(-\frac{100R_3^2}{T_2}\right). \label{x3fromabove F}
    \end{align}
    Similarly, if $M$ is sufficiently large, the forcing term from the Carleman inequality may also  be absorbed into the left hand side of (\ref{Carleman3rd}):
    \begin{align}
        T_3\left(6e \right)^{-\frac{r_3^2}{400t_3}}\int^0_{-T_3}\int_{|x|\leq r_3} |\nabla \times F(x,s)|^2 dxds &\leq (6e)^{\frac{2500R_3^2}{T_2}}T_3\norm{ F}{L^2\dot{H}^1(\R^3\times(-1,0))}^2 
        \\ &\leq T_3e^{e^{M^{608}}} e^{-2e^{M^{609}}} 
        \leq M^{-1}T_2^{-\frac{1}{2}}\exp\left(-\frac{100R_3^2}{T_2}\right). \label{F3fromabove}
    \end{align}
    Combining lines (\ref{Carleman3rd})-(\ref{F3fromabove}) then gives us 
    \begin{align}
        T_2^{-\frac{1}{2}}\exp\left(-\frac{100R_3^2}{T_2}\right) \lesssim (6e)^{-\frac{Cr_3^2}{t_3}}\int_{B_{r_3}(x_3)}|\omega(x,0)|^2\;dx 
        \lesssim \exp{\left(\frac{CR_3^2}{T_2}\right)}\int_{B_{r_3}(x_3)}|\omega(x,0)|^2\;dx.
    \end{align}
    Substituting the upper bound for $R_3$, and using the containment of $B_{r_3}(x_3)$ in the annulus of regularity $B_{\frac{3M^{600}\hat{R}}{16}}\setminus B_{2\hat{R}}$ gives the Claim \ref{Step 2 and 3 claim} in the case (a). 
    \end{proof}
    \textbf{Step 4 - Summing of scales and conclusion:}
    \quad As in Step 4 of \cite{Spatialconc} and \cite{Tao21}, we transfer Claim \ref{Step 2 and 3 claim} to a lower bound on the $L^3$ norm of the velocity and then sum disjoint scales to conclude. In our case with forcing with low-regularity bounds, we cannot use the Lipschitz norm of the vorticity to do this as in the unforced case. Instead, we work with appropriate H\"older norms of the vorticity for this purpose.  
    
    Let $8M^{400}(-t_0)<T_2 \leq 1$ and set $r_4:=T_2^{\frac{1}{2}}\exp\left(-6\exp\left(M^{609}\right)\right)<\hat{R}$. By using Claim \ref{Step 2 and 3 claim} and applying the pigeonhole principle, there exists $x_4\in B_{\frac{3M^{600}\hat{R}}{16}}\setminus B_{2\hat{R}}$ and $i\in \lbrace 1,2,3 \rbrace$ such that
    \begin{align}
        |\omega_i(x_4,0)| \geq 2T_2^{-1}\exp\left(-\exp\left(M^{609}\right)\right). \label{bnd below of vort at t=0}
    \end{align}
    In particular, observe that $B_{r_4}(x_4)$ is contained in $A_2$. Thus, using the H\"older continuity (\ref{annulus of reg step 2}) of the vorticity in the annulus $A_2$, we see that for all $x\in B_{r_4}(x_4),$ \begin{align}
        |\omega_i(x,0)-\omega_i(x_4,0)|\leq \norm{\omega(\cdot,0)}{C^{0,\frac{1}{6}}(A_2)}r_4^\frac{1}{6}
        &\leq
        T_2^{-1}\exp\left(-\exp\left(M^{609}\right)\right).
    \end{align}
    Hence, using (\ref{bnd below of vort at t=0}), 
    \begin{align}
         |\omega_i(x,0)| &\geq |\omega_i(x_4,0)|-|\omega_i(x,0)-\omega_i(x_4,0)| \\&\geq 2T_2^{-1}\exp\left(-\exp\left(M^{609}\right)\right)-T_2^{-1}\exp\left(-\exp\left(M^{609}\right)\right) = T_2^{-1}\exp\left(-\exp\left(M^{609}\right)\right).
    \end{align}
    It then also follows that $\omega_i$ has constant sign within $B_{r_4}(x_4)$. 
    \\We may now follow in verbatim the rest of Step 4 of \cite[pp. 751-752]{Spatialconc}, with adjusted indices, as this part of the proof is not affected by forcing. 
    Indeed, using these observations, integration by parts, H\"older's inequality and the definition of $\hat{R}$, we have that for all $ 8M^{400}(-t_0)\leq T_2\leq 1$,
    \begin{align}
        \int\limits_{B\left(\exp\left(M^{608}\right)T_2^\frac{1}{2}\right)\setminus B(T_2^\frac{1}{2})}|v(x,0)|^3\;dx \geq \exp\left(-\exp\left(M^{610}\right)\right). \label{Key L3 estim at t=0}
    \end{align}
    Then by summing (\ref{Key L3 estim at t=0}) over $K:= \floor{M^{-1216}\log\left(\frac{1}{8M^{400}(-t_0)}\right)}$ scales\footnote{$K\geq 1$ if $0 < -t_0 \leq \frac{1}{8}M^{-400}\exp\left(-M^{608}\right). $ Else, we may just take $T_2=1$ in \eqref{Key L3 estim at t=0} to obtain (\ref{t_0lowerbnd}) instantly.} (for $k=0,1,...\,,\, K,$ set $T_2^{(0)}:= 8M^{400}(-t_0)$ and $T_2^{(k)} := \exp\left(kM^{1216}\right)8M^{400}(-t_0)$) and rearranging, we have
    \begin{align}
        -t_0 \geq \frac{1}{8}M^{-400}\exp\left(-\exp(\exp\left(M^{611} \right)) \int\limits_{\R^3}|v(x,0)|^3\;dx\right) .\label{t_0lowerbnd}
    \end{align}    
    This allows us to define $-s_0 := \frac{1}{16}M^{-400}\exp\left(-\exp(\exp\left(M^{611} \right)) M^3\right)$ so that the contrapositive of (\ref{vorticity concentrates}) holds. But then by Proposition \ref{vorticity doesn't cocentrate into quantative regularity}, we see that, for $M$ sufficiently large,
    \begin{align}
        \norm{v}{L^\infty\left(B\left(M^{92}(-s_0)^\frac{1}{2}\right)\times (\frac{s_0}{2},0)\right)} \lesssim (-s_0)^{-\frac{1}{2}}\leq \exp(\exp(\exp(M^{612}))).
    \end{align}
    As the bounds are spatially translation invariant, we infer that 
    \begin{align}
        \norm{v}{L^\infty(\R^3\times(\exp(-\exp(\exp(M^{612}))),0))}\lesssim \exp(\exp(\exp(M^{612}))).
    \end{align}
\end{proof}
Rescaling what we have proven, we have:
\begin{Proposition}\label{QuantestimRescaled}
    Let $M\geq 1$ be sufficiently large. Suppose that $(v,p)$ is a smooth Leray-Hopf weak solution to the Navier-Stokes equations with forcing $F$ on $\R^3\times (-1,0)$ such that 
    \begin{align}
        \norm{v}{L^\infty_t L^3_x(\R^3\times(-1,0))}+\norm{F}{L^2_tH^1_x\cap L^6_{t,x}(\R^3\times(-1,0))} \leq M.
    \end{align}
    Then we have that
    \begin{align}
        \norm{v}{L^\infty\left(\R^3\times (-\exp\left(-\exp(\exp\left(M^{613} \right))\right),0)\right)} \lesssim  \exp(\exp(\exp(M^{613}))).
    \end{align}
\end{Proposition}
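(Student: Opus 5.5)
The plan is to deduce this from the preceding proposition (the one with the smallness hypothesis \eqref{Quant Assumps}) by the Navier--Stokes rescaling \eqref{NSrescale}. The velocity norm $\norm{v}{L^\infty_tL^3_x}$ is scale invariant. The forcing norms $\norm{\nabla F}{L^2_{t,x}}$, $\norm{F}{L^2_{t,x}}$ and $\norm{F}{L^6_{t,x}}$ are subcritical, so they gain positive powers of $\lambda$ when we zoom in.

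\textbf{Step 1 (choice of scale).} Set $v_\lambda(x,t)=\lambda v(\lambda x,\lambda^2 t)$, $p_\lambda=\lambda^2p(\lambda x,\lambda^2t)$ and $F_\lambda=\lambda^3F(\lambda x,\lambda^2 t)$ for $(x,t)\in\R^3\times(-1,0)$, with $\lambda\in(0,1]$. Then $(v_\lambda,p_\lambda)$ is again a smooth Leray--Hopf solution on $\R^3\times(-1,0)$ with forcing $F_\lambda$, since its time interval $(-\lambda^2,0)$ lies in $(-1,0)$. A change of variables gives
\begin{align}
\norm{F_\lambda}{L^2_{t,x}}=\lambda^{\frac{3}{2}}\norm{F}{L^2_{t,x}(\R^3\times(-\lambda^2,0))},\quad \norm{\nabla F_\lambda}{L^2_{t,x}}=\lambda^{\frac{5}{2}}\norm{\nabla F}{L^2_{t,x}},\quad \norm{F_\lambda}{L^6_{t,x}}=\lambda^{\frac{13}{6}}\norm{F}{L^6_{t,x}}.
\end{align}
Hence $\norm{F_\lambda}{L^2_tH^1_x\cap L^6_{t,x}}\lesssim \lambda^{3/2}M$. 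I will choose $\lambda:=\exp\bigl(-e^{M^{609}}\bigr)\cdot M^{-1}/C$. This gives $\norm{F_\lambda}{L^2_tH^1_x\cap L^6_{t,x}}\le e^{-e^{M^{609}}}$, while $\norm{v_\lambda}{L^\infty_tL^3_x}\le M$.

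\textbf{Step 2 (apply the unrescaled estimate).} The previous proposition applies to $v_\lambda$. It gives $|v_\lambda|\lesssim \exp\exp\exp(M^{612})=:K$ on $\R^3\times(-K^{-1},0)$, where I use the bound on the time interval, which is $\exp(-\exp\exp(M^{612}))=K^{-1}$.

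\textbf{Step 3 (undo the rescaling).} This yields $|v(y,s)|\lesssim \lambda^{-1}K$ for $s\in(-\lambda^2K^{-1},0)$. Now $\lambda^{-1}\lesssim M e^{e^{M^{609}}}$, which is far smaller than $K$. So $\lambda^{-1}K\le \exp\exp\exp(M^{613})$, and likewise $\lambda^2K^{-1}\ge \exp(-\exp\exp(M^{613}))$, for $M$ sufficiently large. This is the claimed bound on the claimed time interval.

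\textbf{Main obstacle.} The only delicate point is bookkeeping: checking that the rescaled solution meets every hypothesis of the preceding proposition (smooth, Leray--Hopf on the full interval $(-1,0)$) and that absorbing the loss $\lambda^{-1}$ costs only one exponent in the triple exponential. Because $\lambda^{-1}$ is merely doubly exponential in a power of $M$, this absorption is comfortable.
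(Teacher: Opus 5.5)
Your route is the same as the paper's: rescale with \eqref{NSrescale}, apply the preceding proposition to $v_\lambda$, scale back, and absorb $\lambda^{-1}$ into the extra power of $M$. However, Step 1 contains a scaling error, and with it your choice of $\lambda$ does not verify the smallness hypothesis. Under $y=\lambda x$, $s=\lambda^2 t$ one has $dx\,dt=\lambda^{-5}\,dy\,ds$. For the two $L^2$-based norms you used only the spatial Jacobian $\lambda^{-3}$. The correct identities are
\[
\|F_\lambda\|_{L^2_{t,x}}=\lambda^{\frac12}\|F\|_{L^2_{t,x}},\qquad \|\nabla F_\lambda\|_{L^2_{t,x}}=\lambda^{\frac32}\|\nabla F\|_{L^2_{t,x}},\qquad \|F_\lambda\|_{L^6_{t,x}}=\lambda^{\frac{13}{6}}\|F\|_{L^6_{t,x}}.
\]
So the $L^2_{t,x}$ part of the $L^2_tH^1_x$ norm limits you to $\|F_\lambda\|_{L^2_tH^1_x\cap L^6_{t,x}}\lesssim\lambda^{1/2}M$, exactly as in the paper. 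With your $\lambda=C^{-1}M^{-1}e^{-e^{M^{609}}}$ this bound is of size $C^{-1/2}M^{1/2}e^{-\frac12 e^{M^{609}}}$. That is enormously larger than the required $e^{-e^{M^{609}}}$, so the preceding proposition cannot be invoked as written.

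The repair is immediate. Choose $\lambda$ with $\lambda^{1/2}M\le e^{-e^{M^{609}}}$, for example $\lambda=M^{-2}e^{-2e^{M^{609}}}$. The paper takes $\lambda=e^{-e^{e^M}}$, which also works for $M$ large since $e^M\gg M^{609}$. Your Step 3 then goes through unchanged, because in either case $\log\lambda^{-1}\ll\exp\exp(M^{612})$. This gives $\lambda^{-1}\exp\exp\exp(M^{612})\le\exp\exp\exp(M^{613})$ and $\lambda^{2}\exp(-\exp\exp(M^{612}))\ge\exp(-\exp\exp(M^{613}))$. With this correction your argument coincides with the paper's proof.
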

\begin{proof}
    Rescale; let $\lambda:= e^{-e^{e^M}}$ and let $v_\lambda, F_\lambda:\R^3\times (-1,0)\to\R^3$ be defined by 
    \begin{align}
        v_\lambda(x,s) := \lambda v(\lambda x, \lambda^2s)
        \qquad F_\lambda(x,s) := \lambda^3 F(\lambda x, \lambda^2s).
    \end{align}
    Then using that $\lambda<1,$ we have
    $\norm{v_\lambda}{L^\infty_t L^3_x(\R^3\times(-1,0))}\leq M$
    and
    $\norm{F_\lambda}{L^2_tH^1_x\cap L^6_{t,x}(\R^3\times(-1,0))} \leq \lambda^\frac{1}{2}M \leq e^{-e^{M^{609}}}.$
    Then the preceding Proposition tells us that
    \begin{align}
        \norm{v_\lambda}{L^\infty(\R^3\times (-\exp\left(-\exp(\exp\left(M^{612} \right))\right),0))}\lesssim \exp(\exp(\exp(M^{612}))).
    \end{align}
    Rescaling back to $v$ and ensuring that $M$ is sufficiently large, we get the claimed result.
\end{proof}
Rescaling again:
\begin{Proposition}\label{Rescaledquantestimtime}
     Let $M\geq 1$ be sufficiently large and let $t_0<t_1<t_0+1\in\R$. Suppose that $(v,p)$ is a smooth suitable Leray-Hopf weak solution to the Navier-Stokes equations with forcing $F$ on $\R^3\times (t_0,t_1).$ Assume that 
    \begin{align}
        \norm{v}{L^\infty_t L^3_x(\R^3\times (t_0,t_1))}&+\norm{F}{L^2_tH^1_x\cap L^6_{t,x}(\R^3\times(t_0,t_1))} \leq M.
    \end{align}
    Then we have
    \begin{align}
        \norm{v(\cdot,t)}{L^\infty\left(\R^3\right)} \lesssim  \frac{1}{(t-t_0)^\frac{1}{2}}\exp(\exp(\exp(M^{613}))) \qquad \text{for all } t\in (t_0,t_1).
    \end{align}
\end{Proposition}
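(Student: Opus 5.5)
Fix $t\in(t_0,t_1)$ and set $\tau:=t-t_0\in(0,1)$. The plan is to reduce to Proposition \ref{QuantestimRescaled} by a parabolic rescaling that maps $(t_0,t)$ onto $(-1,0)$ and puts the point of interest at the final time. Define $\lambda:=\tau^{\frac{1}{2}}\le 1$ and
\begin{align*}
v_\lambda(x,s):=\lambda v(\lambda x,t+\lambda^2 s),\qquad p_\lambda(x,s):=\lambda^2p(\lambda x,t+\lambda^2 s),\qquad F_\lambda(x,s):=\lambda^3F(\lambda x,t+\lambda^2 s)
\end{align*}
for $(x,s)\in\R^3\times(-1,0)$. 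By \eqref{NSrescale}, $(v_\lambda,p_\lambda)$ is a smooth suitable Leray-Hopf weak solution with forcing $F_\lambda$ on $\R^3\times(-1,0)$. Time translation is harmless, and the energy class is preserved because $v(\cdot,t_0^+)$ is in $L^2$.

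Next I check the hypotheses of Proposition \ref{QuantestimRescaled}. The $L^\infty_tL^3_x$ norm is scale invariant, so $\norm{v_\lambda}{L^\infty_tL^3_x(\R^3\times(-1,0))}\le M$. For the forcing, a change of variables gives
\begin{align*}
\norm{F_\lambda}{L^6_{t,x}}=\lambda^{3-\frac{5}{6}}\norm{F}{L^6_{t,x}},\qquad \norm{F_\lambda}{L^2_{t,x}}=\lambda^{3-\frac{5}{2}}\norm{F}{L^2_{t,x}},\qquad \norm{\nabla F_\lambda}{L^2_{t,x}}=\lambda^{\frac{3}{2}}\norm{\nabla F}{L^2_{t,x}},
\end{align*}
with the right-hand norms taken over $\R^3\times(t_0,t)\subset\R^3\times(t_0,t_1)$. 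All exponents of $\lambda$ are nonnegative, so $\lambda\le1$ gives $\norm{F_\lambda}{L^2_tH^1_x\cap L^6_{t,x}(\R^3\times(-1,0))}\le\norm{F}{L^2_tH^1_x\cap L^6_{t,x}(\R^3\times(t_0,t_1))}$. Hence the combined quantity is at most $M$.

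The conclusion of Proposition \ref{QuantestimRescaled} would give $\norm{v_\lambda}{L^\infty(\R^3\times(-\delta,0))}\lesssim\exp(\exp(\exp(M^{613})))$ with $\delta=\exp(-\exp(\exp(M^{613})))$. The one point needing care is that this is a bound on an $L^\infty$ norm over a time interval, not at the endpoint $s=0$, which need not be attained. I handle this by applying the argument at every $t'\in(t_0,t_1)$ and using smoothness of $v$. Alternatively, the rescaled point $s=-\delta/2$ corresponds to a time $t''<t$, and applying the argument at all such $t''$ covers all times, with comparable $t''-t_0$. Since $v_\lambda$ is continuous, we obtain $|v_\lambda(x,0)|\lesssim\exp(\exp(\exp(M^{613})))$ for all $x$. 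Undoing the scaling, $|v(y,t)|=\lambda^{-1}|v_\lambda(y/\lambda,0)|\lesssim(t-t_0)^{-\frac{1}{2}}\exp(\exp(\exp(M^{613})))$, which is the claim.

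I do not expect a genuine obstacle. The only points to verify are the signs of the scaling exponents for the inhomogeneous $H^1$ norm, which all favour $\lambda\le1$ because $t_1-t_0<1$, and the endpoint-in-time issue, which continuity resolves.
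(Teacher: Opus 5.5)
Your proposal is correct and follows essentially the same route as the paper: the same parabolic rescaling with $\lambda=(t-t_0)^{\frac{1}{2}}$ centred at the final time $t$, reducing to Proposition \ref{QuantestimRescaled}. Your check that every forcing norm scales with a nonnegative power of $\lambda\le 1$ is correct, and so is passing the bound to the endpoint $s=0$ by continuity (valid since $t<t_1$); the paper leaves both points implicit.
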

\begin{proof}
    Rescale; let $\hat{v}, \hat{F}:\R^3\times (-1,0)\to\R^3$ be defined by 
    \begin{align}
        \hat{v}(x,s) &:= (t-t_0)^\frac{1}{2}v((t-t_0)^\frac{1}{2}x, (t-t_0)s+t),
        &\hat{F}(x,s) &:= (t-t_0)^\frac{3}{2}F((t-t_0)^\frac{1}{2}x, (t-t_0)s+t).
    \end{align}
\end{proof}
\section{Application of the quantitative estimate I: localized Orlicz-type quantity blow-up}\label{Orlicz Proof}
\subsection{Global mild criticality breaking for the forced Navier-Stokes equations}\label{mildcritbreakingsection}
For $t_0\in\R$, $S>0$, define the $L^4$ energy as
\begin{equation}
\begin{split}\nonumber
    \mathcal{E}_{4,t_0}(S):= &\sup_{t_0<s<t_0+S}\norm{v(\cdot,s)}{L^4_x(\R^3)}^4 + 12\int_{t_0}^{t_0+S}\int_{\R^3}|\nabla v|^2|v|^{2}\;dxds
\end{split}
\end{equation}
\begin{Lemma} \label{L^p energy estimate}
    Let $t_0\in \R$ and $s>0$. There exists a universal constant $C>0$ such that the following holds: Let $v$ be a smooth Leray-Hopf weak solution to the Navier-Stokes equations on $\R^3\times [t_0,t_0+S]$ with smooth forcing $f\in L^1_tL^4_x(\R^3\times [t_0,t_0+S])$ and initial data $v(\cdot,t_0)\in L^2(\R^3)\cap L^4(\R^3).$
    Then we have the following estimate:
    \begin{equation}
        \mathcal{E}_{4,t_0}(S) \leq 4\norm{v(\cdot,t_0)}{L^4(\R^3)}^4+ C\norm{v}{L^5(\R^3\times(t_0,t_0+S))}\mathcal{E}_{4,t_0}(S) + C\norm{f}{L^1_tL^4_x(\R^3\times(t_0,t_0+S))}.\label{L^p specific energy est}
    \end{equation}
\end{Lemma}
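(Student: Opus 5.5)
The plan is the standard $L^4$ energy identity for $|v|^4$, with each right-hand term estimated by H\"older and the $E$-dependent pieces either kept in the form $C\|v\|_{L^5}\mathcal{E}_{4,t_0}(S)$ or absorbed into the left-hand side. The forcing term is the delicate one. In the regime $\|f\|_{L^1_tL^4_x}\le 1$ I can bring it to the stated linear form; outside that regime I do not see how to, as explained at the end.

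\textbf{The identity.} Since $v$ is smooth and in $L^2\cap L^4$, I dot the equation with $|v|^2v$, integrate over $\R^3\times(t_0,t)$ for $t_0<t<t_0+S$, and integrate by parts (boundary terms vanish by the decay of the smooth Leray--Hopf solution). Using $\operatorname{div} v=0$, the transport term $\int (v\cdot\nabla v)\cdot|v|^2v=\tfrac14\int v\cdot\nabla|v|^4$ vanishes. The viscous term gives
\[
\int|\nabla v|^2|v|^2+\tfrac12\int|\nabla|v|^2|^2\ \ge\ \int|\nabla v|^2|v|^2 .
\]
Multiplying by $4$, this yields for each $t$:
\[
\|v(t)\|_{L^4}^4+4\int_{t_0}^t\!\!\int|\nabla v|^2|v|^2 \le \|v(t_0)\|_{L^4}^4 + 4\Big|\int_{t_0}^t\!\!\int p\,v\cdot\nabla|v|^2\Big| + 4\int_{t_0}^t\!\!\int |f||v|^3 .
\]

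\textbf{The pressure term.} I decompose $p=p_{v\otimes v}+p_f$ with $p_{v\otimes v}=\mathcal{R}_i\mathcal{R}_j(v_iv_j)$ and $p_f=(\Delta)^{-1}\operatorname{div}f$. For the quadratic part, H\"older and Calder\'on--Zygmund give
\[
\int\!\!\int|p_{v\otimes v}||v||\nabla v||v| \le \|p_{v\otimes v}\|_{L^{5/2}}\,\big\||v|\,|\nabla v|\big\|_{L^2}\,\|v\|_{L^{10}_{t,x}} \lesssim \|v\|_{L^5}^2\,\mathcal{E}^{1/2}\,\|v\|_{L^{10}_{t,x}} .
\]
Sobolev applied to $|v|^2$ gives $\|v\|_{L^{10}_{t,x}}^2=\||v|^2\|_{L^5_{t,x}}\lesssim\mathcal{E}^{1/2}$. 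I then interpolate so that exactly one power of $\|v\|_{L^5}$ survives, as in \cite{Mildcriticality}, to obtain $C\|v\|_{L^5}\mathcal{E}$.

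The part $p_f$ is handled by integrating by parts back onto $f$: this produces terms of the form $\int\!\!\int|f||v|^3$, which are treated together with the direct forcing term below.

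\textbf{The forcing term (main obstacle).} H\"older in space and then in time gives
\[
4\int\!\!\int|f||v|^3\le 4\int\|f(s)\|_{L^4}\|v(s)\|_{L^4}^3\,ds\le 4\|f\|_{L^1_tL^4_x}\,\mathcal{E}^{3/4}.
\]
To remove the $\mathcal{E}$-dependence I use Young with exponents $4$ and $\tfrac43$:
\[
4\|f\|_{L^1_tL^4_x}\mathcal{E}^{3/4}\le \tfrac34\,\mathcal{E}+C\|f\|_{L^1_tL^4_x}^4 .
\]
Taking the supremum over $t$ (and putting the dissipation at its full horizon), summing the sup-part and the dissipation, and absorbing the resulting $\tfrac34\mathcal{E}$ into the left-hand side multiplies the whole inequality by $4$. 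This is the source of the factor $4\|v(t_0)\|_{L^4}^4$, and the $\|v\|_{L^5}$ term stays of the form $C\|v\|_{L^5}\mathcal{E}$. I would check carefully that the dissipation constant $12$ is compatible with this absorption.

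This leaves $C\|f\|^4_{L^1_tL^4_x}$, which is bounded by $C\|f\|_{L^1_tL^4_x}$ only when $\|f\|_{L^1_tL^4_x}\le1$; this is the regime of the applications, where the Bogovskii-truncated force is small on short intervals. For $\|f\|_{L^1_tL^4_x}>1$, the forcing term scales like $\|f\|\mathcal{E}^{3/4}$ and I do not see how to bound it linearly in $\|f\|$ without either an $\mathcal{E}$-dependent factor or a higher power of $\|f\|$. So in that regime the stated form is the step I cannot close.
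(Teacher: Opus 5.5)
Your route is the paper's: test with $|v|^2v$, split $p=\mathcal{R}_i\mathcal{R}_j(v_iv_j)+\Delta^{-1}\operatorname{div}f$, bound $\nabla p_f$ in $L^4$ by Calder\'on--Zygmund, and finish with H\"older and Young with exponents $4,\tfrac43$. (Strictly, the $p_f$ term gives $\lesssim\int\|f\|_{L^4}\|v\|_{L^4}^3\,ds$ after H\"older, not $\iint|f||v|^3$ pointwise.)

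\textbf{The forcing term.} The obstruction you hit is a defect of the statement, not of your argument. The paper's own proof also ends with $\tfrac34\mathcal{E}_{4,t_0}(S)+C\|f\|_{L^1_tL^4_x}^4$, so ``rearranging'' gives $C\|f\|_{L^1_tL^4_x}^4$. The linear version is false. Under $v_\lambda=\lambda v(\lambda x,\lambda^2t)$, $f_\lambda=\lambda^3f(\lambda x,\lambda^2t)$ on the rescaled interval, $\mathcal{E}$ is multiplied by $\lambda$, $\|v\|_{L^5_{t,x}}$ is invariant, and $\|f\|_{L^1_tL^4_x}$ is multiplied by $\lambda^{1/4}$. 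Take $v(\cdot,t_0)=0$, a nonzero divergence-free $f$ (so $v\not\equiv0$), and $S$ so small that $C\|v\|_{L^5_{t,x}}\le\tfrac12$. The linear inequality would then give $\lambda\mathcal{E}\le2C\lambda^{1/4}\|f\|_{L^1_tL^4_x}$ for all $\lambda>0$, forcing $\mathcal{E}=0$, a contradiction.

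So prove the quartic version. It is all the mild criticality-breaking proposition needs: there one only has $\|f\|_{L^1_tL^4_x}\le A$ with $A$ large (not small, as you suggest), and $CA^4$ serves as well as $CA$. Likewise, the exact factor $4$ is not what the bookkeeping produces. The identity controls only $4\iint|\nabla v|^2|v|^2$ from below, so matching the coefficient $12$ in $\mathcal{E}_{4,t_0}(S)$ already puts a factor $4$ on $\|v(\cdot,t_0)\|_{L^4}^4$ before the Young absorption. Any universal constant suffices downstream.

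\textbf{The genuine gap: your estimate of $p_{v\otimes v}$.} The claim $\||v|^2\|_{L^5_{t,x}}\lesssim\mathcal{E}^{1/2}$ is false. $|v|^2$ lies in $L^\infty_tL^2_x\cap L^2_t\dot H^1_x$, which embeds into $L^{10/3}_{t,x}$, not $L^5_{t,x}$; the two sides scale like $\lambda$ and $\lambda^{1/2}$. Even granting it, you would reach $\|v\|_{L^5}^2\mathcal{E}^{3/4}$. No interpolation converts this into $\|v\|_{L^5}\mathcal{E}$, since that would require $\|v\|_{L^5_{t,x}}\lesssim\mathcal{E}^{1/4}$, again false by scaling.

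The fix is to put the $L^5$ norm on the undifferentiated factor $v$ and the pressure in $L^{10/3}$:
\[
\iint|p_{v\otimes v}|\,|v|^2|\nabla v|\le\|p_{v\otimes v}\|_{L^{10/3}_{t,x}}\,\|v\|_{L^5_{t,x}}\,\big\||v||\nabla v|\big\|_{L^2_{t,x}}.
\]
Calder\'on--Zygmund, interpolation and Sobolev (using $|\nabla|v|^2|\le2|v||\nabla v|$) give
\[
\|p_{v\otimes v}\|_{L^{10/3}_{t,x}}\lesssim\||v|^2\|_{L^{10/3}_{t,x}}\le\||v|^2\|_{L^\infty_tL^2_x}^{2/5}\,\||v|^2\|_{L^2_tL^6_x}^{3/5}\lesssim\mathcal{E}_{4,t_0}(S)^{1/2}.
\]
This yields the required bound $C\|v\|_{L^5_{t,x}}\mathcal{E}_{4,t_0}(S)$.
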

This is in the same spirit as \cite[Proposition 4]{Mildcriticality}. Here, we incorporate forcing; in particular, one must deal with the part of the pressure arising from the forcing term.
\begin{proof}
     We decompose the pressure into $p=p_{v\otimes v}+p_f$ where $p_{v\otimes v}:= \mathcal{R}_i\mathcal{R}_j(v_iv_j)$ and $p_f:= \Delta^{-1}\operatorname{div}(f).$ Then, using H\"older's inequality and Calder\'on-Zygmund estimates, we have
    \begin{align}
        \left\vert \int_{t_0}^t\int_{\R^3}\nabla p\cdot |v|^{2}v\;dxds\right\vert \nonumber &\leq \int_{t_0}^{t_0+S}\int_{\R^3}|\mathcal{R}_i\mathcal{R}_j(v_iv_j)\cdot v_kv_l\partial_kv_l|\;dxds +\int_{t_0}^{t_0+S}\int_{\R^3}|\nabla p_f\cdot |v|^{2}v|\;dxds
        \\ &\lesssim I_{v\otimes v}+ \int_{t_0}^{t_0+S}\norm{\nabla p_f}{L^4(\R^3)}\norm{v}{L^4(\R^3)}^{3}\;ds
        \\&\lesssim I_{v\otimes v}+\int_{t_0}^{t_0+S}\norm{f}{L^4(\R^3)}\norm{v}{L^4(\R^3)}^{3}\;ds \lesssim I_{v\otimes v}+(\mathcal{E}_{4,t_0}(S))^\frac{3}{4}\norm{f}{L^1_tL^4_x(\R^3\times(t_0,t_0+S))}.\label{prssenergydecomp}
    \end{align}
    Here, $I_{v\otimes v}:=\int_{t_0}^t\int_{\R^3}|\mathcal{R}_i\mathcal{R}_j(v_iv_j)\cdot v_kv_l\partial_kv_l|\;dxds$. 
    Testing the Navier-Stokes equations against $v|v|^{2}$ and using \eqref{prssenergydecomp} then gives:
    \begin{align}
        &\mathcal{E}_{4,t_0}(S) \leq \int_{\R^3}|v(x,t_0)|^4\;dx +C\left(I_{v\otimes v}+(\mathcal{E}_{4,t_0}(S))^\frac{3}{4}\norm{f}{L^1_tL^4_x(\R^3\times(t_0,t_0+S))}\right)+ 4\int_{t_0}^{t_0+S}\int_{\R^3}f\cdot v|v|^{2}dxds
    \end{align}
    We estimate the final forcing term using H\"older's inequality:
    \begin{align}\nonumber\left| \int_{t_0}^{t_0+S}\int_{\R^3}f\cdot v|v|^{2}dxds\right| \leq \int_{t_0}^{t_0+S}\norm{f}{L^4(\R^3)}\norm{v}{L^4(\R^3)}^{3}\;ds &\leq  \left(\sup_{t_0<s<t_0+S}\norm{v(\cdot,s)}{L^4(\R^3)}^{3}\right)\int_{t_0}^{t_0+S}\norm{f}{L^4(\R^3)}\;ds
    \\&\leq (\mathcal{E}_{4,t_0}(S))^\frac{3}{4}\norm{f}{L^1_tL^4_x(\R^3\times(t_0,t_0+S))}. 
    \end{align}
    Finally, $I_{v\otimes v}$ may be treated exactly as in \cite{Spatialconc}.
    This gives, for some universal $C>0$,
    \begin{align}
        \mathcal{E}_{4,t_0}(S) &\leq 
        \norm{v(\cdot,t_0)}{L^4(\R^3)}^4+ C\norm{v}{L^5(\R^3\times(t_0,t_0+S))}\mathcal{E}_{4,t_0}(S) + C(\mathcal{E}_{4,t_0}(S))^\frac{3}{4}\norm{f}{L^1_tL^4_x(\R^3\times(t_0,t_0+S))}.
    \end{align}
    Using Young's inequality, 
    \begin{align}
        \mathcal{E}_{4,t_0}(S) &\leq \norm{v(\cdot,t_0)}{L^4(\R^3)}^4+ C\norm{v}{L^5(\R^3\times(t_0,t_0+S))}\mathcal{E}_{4,t_0}(S) + \frac{3\mathcal{E}_{4,t_0}(S)}{4}+C\norm{f}{L^1_tL^4_x(\R^3\times(t_0,t_0+S))}^4.
    \end{align}
    Rearranging this yields the claimed result.
\end{proof}
In the following, we show a slightly supercritical regularity criteria for the forced Navier-Stokes equations, using the same method as \cite[Theorem 1]{Mildcriticality} (which was inspired by \cite{Bulut}).  
\begin{Proposition}[Mild criticality breaking for the forced Navier-Stokes equations]
    \label{General forced subcritical information transfer}
    For $A$ sufficiently large and $N\geq A$, there exists $\mu_0:=\mu_0(A,N)\in(0,\frac{1}{2}]$ such that the following holds.  Let $0<T<1$ and let $v$ be a smooth Leray-Hopf weak solution to the forced Navier-Stokes equations with smooth forcing $f$ on $\R^3\times(0,T)$. 
    If $0<\mu<\mu_0$ and \begin{align}\label{mildcritassump}
        \norm{v_0}{L^{4}(\R^3)} + \norm{f}{L^2_tH^1_x\cap L^6_{t,x}(\R^3\times(0,T))}  &\leq A, 
        &\norm{v}{L^\infty_tL^{3-\mu}_x(\R^3\times(0,T))} &\leq N,
    \end{align} then $v\in L^\infty_t L^4_x(\R^3\times(0,T))$.

\end{Proposition}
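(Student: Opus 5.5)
We follow the continuity (Bulut-type) argument of \cite[Theorem 1]{Mildcriticality}, combining the $L^4$ energy estimate of Lemma \ref{L^p energy estimate} with the quantitative regularity estimate of Proposition \ref{Rescaledquantestimtime}. Since $v$ is smooth on $\R^3\times(0,T)$, the quantity $\mathcal{E}_{4,t}(S)$ is finite on compact subintervals, so it suffices to obtain an a priori bound on $\sup_{t<T}\norm{v(\cdot,t)}{L^4}$ depending only on $A,N,\mu$. First, the forcing term in \eqref{L^p specific energy est} is harmless: by Sobolev interpolation, $\norm{f}{L^1_tL^4_x}\lesssim T^{1/2}\norm{f}{L^2_tH^1_x}\lesssim A$. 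Set $K(t):=\sup_{0<s<t}\norm{v(\cdot,s)}{L^4}$.

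\textbf{Step 1.} On any interval $(t_0,t_0+S)\subset(0,T)$, interpolation gives $\norm{v}{L^\infty_tL^3_x}\le \norm{v}{L^\infty_tL^{3-\mu}_x}^{\theta_\mu}\norm{v}{L^\infty_tL^4_x}^{1-\theta_\mu}\le N^{\theta_\mu}K^{1-\theta_\mu}$, where $1-\theta_\mu\sim\mu$. We apply Proposition \ref{Rescaledquantestimtime} on $(t_0-S,t_0+S)$ with $M:=N^{\theta_\mu}K^{1-\theta_\mu}+A$, and in addition treat the initial layer near $t=0$ directly. Interpolating the resulting $L^\infty$ bound $\lesssim (t-t_0+S)^{-1/2}\exp\exp\exp(M^{613})$ with $L^\infty_tL^3_x\le M$ then yields
\[\norm{v}{L^5(\R^3\times(t_0,t_0+S))}\lesssim S^{1/10}\,\Phi(M),\qquad \Phi(M)=\exp\exp\exp(CM^{613}).\]

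\textbf{Step 2.} Choose $S$ so that $C S^{1/10}\Phi(M)\le\frac12$, i.e.\ $S\sim \Phi(M)^{-10}$. Lemma \ref{L^p energy estimate} then gives $\mathcal{E}_{4,t_0}(S)\le 8\norm{v(t_0)}{L^4}^4+CA^4$. Iterating over the $\lceil T/S\rceil$ subintervals gives $K^4\lesssim 8^{T/S}(A^4+K_0^4)$, that is,
\[\log K\lesssim \Phi\big(N^{\theta_\mu}K^{C\mu}+A\big)^{10}+\log A.\]
With $\mu<\mu_0(A,N)$ small, the right-hand side grows like a triple exponential of $K^{C\mu\cdot613}$. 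This still beats $\log K$ for large $K$, so the naive closing fails.

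\textbf{Step 3 (the main obstacle).} This is exactly where \cite{Mildcriticality} must be followed carefully. One runs the iteration on dyadic levels of $K$: while $\norm{v(t)}{L^4}\le 2^{j}$, the interval length is $S_j\sim\Phi(N^{\theta_\mu}2^{jC\mu}+A)^{-10}$, and the $L^4$ norm can at most multiply by $8$ per step. The key point is that the bound $T<1$ is finite, and each doubling of the $L^4$ norm requires a number of steps that grows with $j$. Concretely, using the global energy inequality and the $L^{3-\mu}$ bound, one controls the time spent at each level, and the sum $\sum_j S_j\cdot(\text{steps})$ must exceed $T$ before $j$ reaches an explicit threshold $j_*(A,N)$, provided $\mu$ is small enough that $2^{j_*C\mu}\le 2$. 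I expect the tuning of $\mu_0(A,N)$ to be the delicate part: it must make $N^{\theta_\mu}K^{1-\theta_\mu}\lesssim N+A$ on the relevant range of $K$, so that $M$ is effectively bounded by a constant depending only on $A,N$. Once this is arranged, Step 2 closes with $S$ depending only on $A,N$, and we obtain $K(T)\le C(A,N)$, i.e.\ $v\in L^\infty_tL^4_x(\R^3\times(0,T))$.
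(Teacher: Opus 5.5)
The $L^5$ bound in Step~1 is false, and Step~2 rests on it. Both $\norm{v}{L^5_{t,x}}$ and the conclusion of Proposition \ref{Rescaledquantestimtime} are invariant under the Navier--Stokes scaling, so no positive power of $S$ can appear. On $(t_0,t_0+S)$ your bound gives $\norm{v(\cdot,t)}{L^\infty}\lesssim S^{-1/2}\Phi(M)$. The correct interpolation $\norm{v}{L^5}\le\norm{v}{L^3}^{3/5}\norm{v}{L^\infty}^{2/5}$ then yields
\begin{align*}
\norm{v}{L^5(\R^3\times(t_0,t_0+S))}^5\lesssim\int_{t_0}^{t_0+S}\frac{M^3\Phi(M)^2}{t-t_0+S}\,dt=M^3\Phi(M)^2\log 2,
\end{align*}
which is independent of $S$. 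A factor $S^{1/10}$ would require the exponent $1/5$ instead of $2/5$ on $\norm{v}{L^\infty}$. So shrinking the time step never makes $C\norm{v}{L^5}\le\frac12$.

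The paper instead applies Proposition \ref{Rescaledquantestimtime} from time $0$, so that $\norm{v(\cdot,t)}{L^\infty}\le t^{-1/2}\Phi(M)$. It removes an initial layer $(0,t')$ with $t'=C_{univ}A^{-8}$, on which local $L^4$ theory with forcing gives $\norm{v}{L^\infty_tL^4_x}\le A^2$ and $\norm{v}{L^5_{t,x}}\le C_{univ}$. This bounds the \emph{total} $L^5$ mass: $\norm{v}{L^5(\R^3\times(t',\tau))}^5\lesssim M^3\Phi(M)^2\log(\tau/t')\lesssim M^3\Phi(M)^2\log A$ for $\tau<T<1$. One then slices into $m+1$ intervals on which $\norm{v}{L^5}=\varepsilon$, with $m\le\varepsilon^{-5}\norm{v}{L^5}^5$, rather than into intervals of equal length.

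Even with that repaired, the closing argument is not actually supplied.
\begin{itemize}
\item Your dyadic scheme relies on each doubling of the $L^4$ norm costing a number of steps that grows with $j$. Lemma \ref{L^p energy estimate} only gives a fixed multiplicative factor per step, so nothing forces this.
\item ``The relevant range of $K$'' is never fixed.
\item $M$ is defined through the $L^4$ bound on the very interval being estimated, which is circular outside a bootstrap.
\end{itemize}

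The missing idea is that $\mu_0$ may be chosen \emph{after} the target bound. First fix $K=K(A,N)\ge A^2$ with $\log K$ large compared with the slice count $m$ obtained for $M=2N$ (a triple exponential in a power of $N$). Then take $\mu_0(A,N)\sim 1/\log K$ so that $(4K)^{\frac{4\mu}{3+3\mu}}\le 2$. The bootstrap step runs as follows.
\begin{itemize}
\item Suppose $\norm{v}{L^\infty_tL^4_x(\R^3\times(0,\tau))}\le K$.
\item Local $L^4$ theory, using $\norm{f}{L^1_tL^4_x}\le A$, gives the bound $4K$ on $(0,\tau')$ for some $\tau'>\tau$, with $\tau'-\tau$ bounded below in terms of $K$ and $A$ only.
\item Interpolating between $L^{3-\mu}$ and $L^4$ gives $\norm{v}{L^\infty_tL^3_x(\R^3\times(0,\tau'))}\le N^{\frac{3-\mu}{3+3\mu}}(4K)^{\frac{4\mu}{3+3\mu}}\le 2N$.
\item Hence $m$ depends only on $A,N$, and iterating Lemma \ref{L^p energy estimate} over the slices returns the bound $K$ on $(0,\tau')$.
\end{itemize}
Finitely many such steps reach $T$. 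This continuity argument also needs $\norm{v(\cdot,t)}{L^4}$ to be finite and controlled. That is guaranteed by local well-posedness in $L^4$ together with uniqueness, not by mere smoothness of $v$.
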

\begin{proof}
    Let $A,N$ be sufficiently large and let $0<\mu<\mu_0$, where $\mu_0\in(0,\tfrac{1}{2}]$ is to be determined later. First, we wish to apply $L^4$ local-in-time existence theory, for which we need to estimate $\norm{f}{L^1_tL^4_x(\R^3\times(0,T))}.$ Using Lebesgue interpolation ($L^4$ between $L^2$ and $L^6$) and H\"older's inequality, 
    \begin{align}
        \norm{f}{L^1_tL^4_x(\R^3\times(0,T))} \leq T^\frac{3}{4}\norm{f}{L^2_{t,x}(\R^3\times(0,T))}^\frac{1}{4}\norm{f}{L^6_{t,x}(\R^3\times(0,T))}^\frac{3}{4} \leq A.\label{forcingl1l4}
    \end{align}
    We can apply\footnote{This gives existence \cite[Theorem 3.2 and 4.5]{FJR72} of a local-in-time solution $\bar{v}$ with the property \eqref{initinalintialestim}, which coincides with our smooth solution via strong-strong uniqueness \cite{Leray}.} \cite{FJR72} with data $       
   g(\cdot,s):=e^{s\Delta}v(\cdot,0)+\int_{0}^s e^{(s-\tau)\Delta}\LP fd\tau$ to see that there exists a time $0<t'\leq\min \left(T,C_{univ}A^{-8}\right)<1$ such that
   \begin{align} 
        \norm{v}{L^\infty_tL^4_x(\R^3\times(0,t'))} &\leq 2\norm{g}{L^\infty_t L^4_x(\R^3\times(0,t'))}
        \leq 2\norm{v_0}{L^4(\R^3)} + C_{univ}\norm{f}{L^1_tL^4_x(\R^3\times(0,T))}\leq A^2.\label{initinalintialestim}
   \end{align} 
   If $T<C_{univ}A^{-8},$ then we are done. Else, we have shown that there exists a time $t':=C_{univ}A^{-8}$ so that, provided that we choose $K(A,N) \geq A^2$,
    \begin{align}\norm{v}{L^\infty_tL^4_x(\R^3\times(0,t'))}\leq  K(A,N). \label{initialmildL4estim}\end{align}
   It also follows (for example, applying \cite[Theorem 3.2]{FJR72}; also see \cite[p.196]{Giga})  from \eqref{mildcritassump} and \eqref{forcingl1l4} that
    \begin{align}
        \norm{v}{L^5_{t,x}(\R^3\times (0,t'))}\leq (t')^\frac{1}{8}\norm{v}{L^\frac{40}{3}_tL^5_x(\R^3\times (0,t'))} \leq 2(t')^\frac{1}{8}\norm{g}{L^\frac{40}{3}_tL^5_x(\R^3\times (0,t'))}\leq C_{univ} .\label{InitiallyL5BND}
    \end{align}
    We may use this to begin the continuity-type argument originally seen in \cite{Bulut}, and subsequently in \cite{Mildcriticality}. Our goal is to show that, under the assumptions of Proposition \ref{General forced subcritical information transfer} and $K(A,N)\geq A^2$ to be determined, if $0<t'<T$ is a time for which 
    \begin{align}
       \norm{v}{L^\infty_tL^4_x(\R^3\times(0,t'))}\leq K,
    \end{align}
    then under assumption \eqref{mildcritassump} there exists a time $t'<t''<T$ so that 
    \begin{align}
        \norm{v}{L^\infty_tL^4_x(\R^3\times(0,t''))}\leq K. \label{continuityarg}
    \end{align}
    This then implies that $v\in L^\infty_tL^4_x(\R^3\times(0,T))$ as required. 
    
    As $\norm{v(\cdot, t')}{L^4}\leq K(A,N)$, applying local-in-time existence theory again tells us that there exists $t'<t''<T$ so that 
    \begin{align} 
        \norm{v}{L^\infty_tL^4_x(\R^3\times(t',t''))} 
        \leq 2\norm{v(\cdot,t')}{L^4(\R^3)} + C_{univ}\norm{f}{L^1_tL^4_x(\R^3\times(0,T))}
        \leq 2K(A,N) + C_{univ}A \leq 4K(A,N). \label{intialmildestimforcritbreak}
    \end{align} 
    This implies that $\norm{v}{L^\infty_tL^4_x(\R^3\times(0,t''))} \leq 4K(A,N). $ 
    Let $\mu\in(0,\frac{1}{2}]$ be determined later. By Lebesgue interpolation, see that
    \begin{align}\norm{v}{L^{\infty}_tL^3_x(\R^3\times(0,t''))} \leq \norm{v}{L^{\infty}_tL^{3-\mu}_x(\R^3\times(0,t''))}^{\frac{3-\mu}{3+3\mu}}\norm{v}{L^\infty_tL^4_x(\R^3\times(0,t''))}^\frac{4\mu}{3+3\mu}\leq N^\frac{3-\mu}{3+3\mu}(4K)^\frac{4\mu}{3+3\mu}=:M.\end{align}   
    Using \eqref{mildcritassump} and $N\geq A$, we may apply Proposition \ref{Rescaledquantestimtime} (with $t_0=0$ and $t_1=t''$) to see that for $t\in(0,t'')$: 
    \begin{equation} \label{L5 in terms of E,B}
        \norm{v(\cdot,t)}{L^\infty(\R^3)} \leq \frac{1}{t^\frac{1}{2}}\exp{\left(\exp{\left(\exp{\left((2M)^{613}\right)}\right)}\right)}.
    \end{equation}
    Interpolating this estimate with the $L^3$ norm for $v$ gives for $t\in(0,t'')$
    \begin{align}
        \norm{v(\cdot,t)}{L^5(\R^3)}\lesssim\frac{1}{t^\frac{1}{5}}\exp{\left(\exp{\left(\exp{\left(M^{614}\right)}\right)}\right)}.
    \end{align}
    Integrating in time over $(t',t'')$, (and using $M, N\geq A$) we have\footnote{Since $0<t''-t'<1$.}
     \begin{align}
        \norm{v}{L^5_{t,x}(\R^3\times (t',t''))}&\lesssim\left(\log(t'')-\log(t')\right)^\frac{1}{5}\exp{\left(\exp{\left(\exp{\left(M^{614}\right)}\right)}\right)}\nonumber
        \\ &\lesssim \left(-\log(C_{univ}A^{-8})\right)^\frac{1}{5}\exp{\left(\exp{\left(\exp{\left(M^{614}\right)}\right)}\right)}\leq \exp{\left(\exp{\left(\exp{\left(M^{615}\right)}\right)}\right)}.
    \end{align}
    Combining with the initial estimate (\ref{InitiallyL5BND}), we see that
     \begin{align}\label{L5 M estim}
        \norm{v}{L^5_{t,x}(\R^3\times (0,t''))} &\leq \exp{\left(\exp{\left(\exp{\left(M^{615}\right)}\right)}\right)}.
    \end{align}
    Now, let $\varepsilon:=\min\left(\frac{1}{2C},1\right)$ where $C$ is the constant in line (\ref{L^p specific energy est}). Similarly to \cite{Bulut}, \cite{Mildcriticality}, we slice the time interval $(0,t'')$ into $m+1$ successive disjoint intervals $I_j=(t_j,t_{j+1})=(t_j,t_j+S_j)$ such that
    \begin{equation}
        (0,t'') = \bigcup\limits_{j\in \lbrace 0, \cdots , m \rbrace} I_j, \qquad \text{and} \qquad I_j\cap I_k = \emptyset \qquad \forall j \neq k,
    \end{equation}
    (where equality is up to the set of interval endpoints which has measure zero) and 
    \begin{equation}
        \norm{v}{L^5(I_j;L^5(\R^3))} =\varepsilon \qquad \forall j\in \lbrace 0, \cdots , m-1 \rbrace, \qquad \text{and} \qquad \norm{v}{L^5(I_{m};L^5(\R^3))} \leq \varepsilon.
    \end{equation}
    This time-slicing in conjunction with the estimate (\ref{L5 M estim}) provides an upper bound on $m$; indeed, see that
    \begin{align}
        m\varepsilon^5 &= \sum_{j=0}^{m-1}\norm{v}{L^5(I_j;L^5(\R^3))}^5 \leq \norm{v}{L^5(0,t;L^5(\R^3))}^5 \leq \exp{\left(\exp{\left(\exp{\left(\left(N^\frac{3-\mu}{3+3\mu}(4K)^\frac{4\mu}{3+3\mu}\right)^{616}\right)}\right)}\right)}.\label{estimate for m}
    \end{align}
    Just as in \cite{Mildcriticality}, we may iteratively apply our $L^4$ energy estimate (Lemma \ref{L^p energy estimate}) on the intervals $I_j$ for $0\leq j \leq m$. With the choice of $\varepsilon = \min\left(\frac{1}{2C},1\right)$ (where $C$ is from Lemma \ref{L^p energy estimate}), and using our estimate for $m$ (\ref{estimate for m}), we obtain
     \begin{align}
        \norm{v}{L^\infty(0,t'';L^4(\R^3))}^4 \leq \exp{\left(\frac{1}{\varepsilon^5}\exp{\exp{\left(\exp{\left(N^{617}(4K)^\frac{4\mu \cdot 617}{3+3\mu}\right)}\right)}}\right)},
    \end{align}
    where we have used the observation that $\mu\in(0,\frac{1}{2}]$ implies $\frac{3-\mu}{3+3\mu}\leq 1$.
    Next, we judiciously choose $K(A,N)$ and $\mu_0(A,N)$; we define:
    \begin{align}
         K(A,N) := &\exp{\left(\frac{1}{4\varepsilon^5}\exp{\exp{\left(\exp{\left(eN^{617}\right)}\right)}}\right)}, \label{K definition}
    \end{align}
   and
   \begin{align}
       0<\mu<\mu_0(A,N) := \min\left( \frac{3}{4\cdot617\log(4K(A,N))-3}, \frac{1}{2}\right).
   \end{align}
   This choice of $\mu_0$ ensures that,  
   $$N^{617}(4K)^\frac{4\mu \cdot 617}{3+3\mu} \leq eN^{617}.$$
   Then our choice of $K(A,N)$ gives us that
   \begin{align}
       \norm{v}{L^\infty(0,t'';L^4(\R^3))} \leq K(A,N).
   \end{align}
   This concludes the continuity argument, so (\ref{continuityarg}) holds for all $0<t''<T,$ so we have that $\norm{v}{L^\infty(0,T;L^4(\R^3))} \leq K(A,N).$ 
\end{proof}
\begin{Remark}\label{Small mu remark}
    (Compare with \cite[Remark 6]{Mildcriticality}).
    Observe that, in particular, the proposition holds if $N\geq A$ with $A$ sufficiently large and 
    \begin{align}
        0< \mu \leq \frac{1}{\exp\exp\exp(N^{618})}.
    \end{align}
\end{Remark}
\subsection{Proof of local slightly supercritical Orlicz blow-up}\label{Localisation Procedure}\label{Localized Orlicz-type blow-up}
We use the following Lemma, which is based on the work of \cite{TruncationMethod}.
\begin{Lemma}[Truncation procedure]\label{Truncation procedure}
        Let $(v,p)$ be a smooth Leray-Hopf weak solution to (\ref{Navier-Stokes equations}) on $\R^3\times (-1,0)$. There exist radii $0<r_0<r_1<1,$ and functions $(V,\Pi)$ which are a smooth Leray-Hopf weak solution to (\ref{Navier-Stokes equations}) with smooth forcing $F$ (defined below) on $\R^3\times (-1,0)$ such that:
        \begin{enumerate}
            \item $V= \Phi v+w$, $\Pi=\Phi p$ where $\Phi\in C^\infty_c(B_{r_1};[0,1])$ with $\Phi=1$ on $B_{r_0}$ and $w$ and $F$ are supported in $B_{r_1}\setminus B_{r_0}.$
            \item $w\in L^\infty_tW^{k,\infty}_x((-\tfrac{1}{2},0)\times\R^3) \label{w estim}$ for $k\in\N_0$.
            \item $F\in L^6_tW^{k,\infty}_x((-\tfrac{1}{2},0)\times\R^3)$ for $k\in\N_0$. 
        \end{enumerate}   
    \end{Lemma}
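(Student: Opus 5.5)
We construct $(V,\Pi,F)$ by the Bogovskii truncation procedure of \cite{TruncationMethod}. First we select the radii $0<r_0<r_1<1$. Since $v$ is a smooth Leray-Hopf weak solution on $\R^3\times(-1,0)$, it is smooth on $\overline{B_{1}}\times[-\tfrac12,0)$, but it may fail to be smooth up to $t=0$. We therefore fix $r_0<r_1$ such that the closed annulus $\overline{B_{r_1}}\setminus B_{r_0}$ is a region where $v$ and $p$ stay bounded with all derivatives on $(-\tfrac12,0)$. If $v$ is globally smooth up to $0$, any choice works. Otherwise we choose the annulus to avoid the (compact, measure-zero by CKN) singular set at time $0$; this is always possible, and in the application a singular point sits at the centre. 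We then fix $\Phi\in C^\infty_c(B_{r_1};[0,1])$ with $\Phi=1$ on $B_{r_0}$, so that $\nabla\Phi$ is supported in the annulus $\mathcal A:=B_{r_1}\setminus \overline{B_{r_0}}$.

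Next we correct the divergence. Since $\operatorname{div}(\Phi v)=\nabla\Phi\cdot v$ is supported in $\mathcal A$ and has zero mean over $\mathcal A$ (by the divergence theorem, as $\operatorname{div} v=0$ and $\Phi$ is constant on $\partial\mathcal A$), we set $w(\cdot,t):=-\mathcal{B}_{\mathcal A}(\nabla\Phi\cdot v(\cdot,t))$, where $\mathcal B_{\mathcal A}$ is the Bogovskii operator on the Lipschitz (indeed smooth) domain $\mathcal A$. Then $w(\cdot,t)\in C^\infty_c(\mathcal A)$ and $\operatorname{div}(\Phi v+w)=0$. Bogovskii's operator maps $W^{k,q}_0(\mathcal A)$ to $W^{k+1,q}_0(\mathcal A)$ and commutes with $\partial_t$. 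The smoothness of $v$ on $\overline{\mathcal A}\times(-\tfrac12,0)$ then gives $w\in L^\infty_tW^{k,\infty}_x$ for every $k$ by Sobolev embedding, which is item (2). We do the same for $\partial_t w$ using $\partial_t v$ on $\mathcal A$.

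We then compute the forcing. Setting $V=\Phi v+w$ and $\Pi=\Phi p$, we define $F$ directly by
\begin{align}
F:=\partial_tV-\Delta V+V\cdot\nabla V+\nabla\Pi .
\end{align}
Using the equation for $v$, we expand
\begin{align}
F=\partial_t w-\Delta w-2\nabla\Phi\cdot\nabla v-v\Delta\Phi+p\nabla\Phi+(\Phi^2-\Phi)\,v\cdot\nabla v+v(v\cdot\nabla\Phi)\Phi+\Phi v\cdot\nabla w+w\cdot\nabla(\Phi v)+w\cdot\nabla w .
\end{align}
Every term carries either a derivative of $\Phi$, the factor $\Phi^2-\Phi$, or $w$, so $F$ is supported in $\mathcal A$, which completes item (1). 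We then bound each term in $W^{k,\infty}_x$ using the regularity of $v,p,w$ on the annulus.

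The main obstacle is item (3), which requires $L^6$ in time rather than $L^\infty$. The reason is that the pressure $p$ and $\partial_t v$, the latter entering through $\partial_t w$, are nonlocal. Interior regularity on the annulus controls $\nabla^k v$ and $\nabla^k\partial_t v$ locally. For the pressure, however, only $\nabla p$ is local, so we control $p$ on the annulus only up to a time-dependent constant. We handle this by subtracting the spatial mean of $p$ over $B_{r_1}$, which is allowed since $\nabla p$ is unchanged. The Poincaré inequality combined with global Calderón–Zygmund bounds, $p\in L^\infty_tL^{3/2}_x$ or $L^2_tL^3_x$ from the energy class, then gives the needed time integrability of $p\nabla\Phi$. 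Interpolating this with the local smoothness of higher spatial derivatives yields $L^6_tW^{k,\infty}_x$. Finally, smoothness of $V$ and $\Pi$ and the Leray-Hopf energy inequality for $V$ with forcing $F\in L^2$ follow, since $V$ is compactly supported and bounded in the energy class.
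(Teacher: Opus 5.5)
Your choice of radii, your Bogovskii correction $w$ on the annulus, and your definition of $F$ are the same as in the paper. Your expansion of $F$ is in fact more careful: it keeps the term $\Phi(v\cdot\nabla\Phi)v$, which the paper's displayed formula drops. The gap is in item (3), which is the point the paper actually argues.

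\textbf{Time derivative and pressure gradient are not local.} You assert that interior regularity on the annulus controls $\nabla^k\partial_t v$, and that ``only $\nabla p$ is local''. Local regularity near regular points controls only the spatial derivatives of $v$. Serrin's example $v=a(t)\nabla h(x)$, $p=-a'(t)h-\tfrac12 a(t)^2|\nabla h|^2$ with $h$ harmonic shows that $\partial_t v$ and $\nabla p$ are not determined by local information. On the annulus, $\partial_t w=-\mathcal{B}_{\mathcal A}(\nabla\Phi\cdot\partial_t v)$ and $\partial_t v=\Delta v-v\cdot\nabla v-\nabla p$. So both $\partial_t w$ and $p\nabla\Phi$ in $F$ need a nonlocal pressure estimate, and you have not supplied one.

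\textbf{The pressure bounds you quote are not available.} The bound $p\in L^\infty_tL^{3/2}_x$ would need $v\in L^\infty_tL^3_x$, and $p\in L^2_tL^3_x$ would need $v\in L^4_tL^6_x$. The energy class only gives $\mathcal{R}_i\mathcal{R}_j(v_iv_j)\in L^a_tL^b_x$ with $\tfrac2a+\tfrac3b=3$, $b>1$. Even your second bound would give only $L^2$ in time, not $L^6$. Subtracting the mean over $B_{r_1}$ and applying Poincar\'e does not help. It requires $\nabla p$ on the whole ball, including near the singular set, where global Calder\'on--Zygmund bounds give time integrability strictly below $2$.

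\textbf{The missing idea.} The paper proceeds in two steps:
\begin{enumerate}
\item Take $(a,b)=(6,\tfrac98)$. Since $v\in L^{12}_tL^{9/4}_x$ in the energy class, this gives $p\in L^6_tL^{9/8}_x(\R^3\times(-1,0))$.
\item On the annulus, $-\Delta p=\partial_i\partial_j(v_iv_j)$, and the right-hand side is in $L^\infty_tW^{k,\infty}_x$ by the annular regularity of $v$. Interior elliptic estimates then give $p\in L^6_tW^{k,\infty}_x$ on a slightly smaller annulus.
\end{enumerate}
This controls $p\nabla\Phi$ with $\Pi=\Phi p$ exactly as stated, with no mean subtraction. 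It also controls $\nabla p$, hence $\partial_t v$ and $\partial_t w$.

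An alternative fix, which even gives $L^\infty_t$: split $\mathcal{R}_i\mathcal{R}_j(v_iv_j)$ into the part generated by $v\otimes v$ near the annulus, treated by Calder\'on--Zygmund, and a far part. On the annulus the far part has a smooth kernel and is bounded by $\norm{v(\cdot,t)}{L^2}^2$.

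\textbf{A minor point on the radii.} Three-dimensional measure zero of the singular set $S$ at time $0$ would not let you find an annulus avoiding it; a radial segment is a counterexample. You need $\mathcal{H}^1(S)=0$, which CKN does give. Then $\{|x|:x\in S\}$ is a compact null subset of $[0,1]$, so its complement contains an interval of radii.
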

        See, for example, \cite[Proposition 2.2]{curvedbndry}. 
        As $v$ is a suitable weak Leray-Hopf solution, as shown in \cite{TruncationMethod}, there exists $0<r_0'<r_1'<1$ such that \begin{align}v\in L^\infty(-\tfrac{1}{2},0;W^{k,\infty}(B_{r_1'}\setminus B_{r_0'}))\label{v annular reg NP}\end{align} Let $0<r_0'<r_0<r_1<r_1'<1$, from which we define $\Phi$ according to (1). Then, the function $w$ is defined via the Bogovskii operator (for example, see \cite[Chapter III]{galdi2011}). It satisfies \begin{align}
            \begin{cases}
                \operatorname{div}w=-\nabla\Phi\cdot v,
                \\ w\big\rvert_{\partial(B_{r_1}\setminus B_{r_0})}=0 .
            \end{cases}
        \end{align}
        Using \cite[Lemma III.3.1 and Remark III.3.3]{galdi2011}, $w$ inherits spatial and temporal derivatives of $v$ through the Bogovskii operator. This allows one to deduce that $(V,\Pi)$ is smooth. We see that they satisfy the Navier-Stokes equations with forcing 
        \begin{align}
            F:=&\left(\partial_t\Phi -\Delta\Phi\right)v - 2\nabla\Phi\cdot\nabla v+\left(\Phi^2-\Phi\right)v\cdot\nabla v \\&+\partial_tw-\Delta w+\Phi v\cdot\nabla w+ w\cdot\nabla\left(\Phi v\right) +w\cdot\nabla w +(\nabla\Phi)p. \nonumber
        \end{align}
        All terms other than the pressure term may be treated as in \cite{TruncationMethod} by using \eqref{v annular reg NP} and are contained in $L^\infty_t W^{k,\infty}_x((t_0,0)\times \R^3)$.
        We now outline how to deduce the improved time regularity for the forcing. 
        In our setting, the pressure has more time regularity  than in the setting of \cite{curvedbndry}. Indeed, here $p=\mathcal{R}_i\mathcal{R}_j(v_iv_j)$, so the fact that $v$ is a Leray-Hopf weak solution implies (by Calderón-Zygmund estimates) that $p\in L^6_tL^\frac{9}{8}_x(\R^3\times(-1,0))$. This is greater than the time regularity $L^\frac{3}{2}_t$ assumed for suitable weak solutions as in \cite{curvedbndry}.
        Now, as $-\Delta p=\operatorname{div}(v\cdot\nabla v)$ and $p\in L^6_tL^1_x(Q_1)$, we may deduce from \eqref{v annular reg NP} and elliptic regularity theory that $(\nabla\Phi )p \in L^6(t_0,0;W^{k,\infty}(\R^3))$, hence Lemma \ref{Truncation procedure} (3) holds.     

\begin{Theorem}\label{SlightlysupercriticalThmproof}
    There exists a universal constant $\theta\in (0,1)$ such that the following holds. Suppose that $v:\R^3\times (0,\infty)\to\R^3$ is a Leray-Hopf weak solution of the Navier-Stokes equations on $\R^3\times (0,\infty)$ which first loses smoothness at time $T_*>0$ with $(x_*,T_*)$ being a singular point. Then for all $\delta>0,$
    \begin{align}
        \limsup_{t\uparrow T_*}\int_{B_\delta(x_*)}\dfrac{|v(x,t)|^3}{\left(\log\log\log\left(\left(\log\left(e^{e^{3e^e}}+|v(x,t)|\right)\right)^\frac{1}{3}\right)\right)^\theta}\;dx = \infty.
    \end{align}
\end{Theorem}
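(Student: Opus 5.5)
We argue by contradiction. Suppose that for some $\delta>0$ the localized Orlicz quantity stays bounded, say by $N_0$, for $t\in(T_*-\tau,T_*)$. Shrinking $\delta$ is harmless, since the integrand is nonnegative. After a translation and parabolic rescaling (the Orlicz quantity is not scale invariant, but we only rescale by a fixed factor depending on $\delta$, $T_*$), we may assume $x_*=0$, $T_*=0$, that the solution is smooth on $\R^3\times(-1,0)$, and that it is a suitable Leray-Hopf weak solution there. We may also assume the Orlicz bound holds on $B_1\times(-1,0)$.

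\textbf{Step 1: truncation.} Apply Lemma \ref{Truncation procedure} to obtain $(V,\Pi)$ with $V=\Phi v+w$. This is a smooth Leray-Hopf solution on $\R^3\times(-\tfrac12,0)$ with forcing $F$ supported in $B_{r_1}\setminus B_{r_0}$. The bounds (2)--(3) of the lemma, together with compact support, give $F\in L^2_tH^1_x\cap L^6_{t,x}(\R^3\times(-\tfrac12,0))$ and $w\in L^\infty_{t,x}$ with compact support. We also have $V(\cdot,-\tfrac12)\in L^4$, since $v$ is smooth at that time. So $\|V(\cdot,-\tfrac12)\|_{L^4}+\|F\|_{L^2_tH^1_x\cap L^6_{t,x}}\le A$ for some finite (large) $A$. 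Since $\Phi=1$ on $B_{r_0}$ and $(0,0)$ is singular, $V$ blows up at time $0$. Hence it suffices to show $V\in L^\infty_tL^4_x(\R^3\times(-\tfrac12,0))$: Serrin-type regularity (Lemma \ref{Serrin-type bootstrap}, or Proposition \ref{Rescaledquantestimtime} with the $L^4$ bound) would then give boundedness near $(0,0)$, a contradiction.

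\textbf{Step 2: from Orlicz to $L^{3-\mu}$.} We aim to apply Proposition \ref{General forced subcritical information transfer}, in the form of Remark \ref{Small mu remark}, to $V$ on $(-\tfrac12,0)$ (shifted to $(0,\tfrac12)$). For this we need $\|V\|_{L^\infty_tL^{3-\mu}_x}\le N$ with $\mu\le (\exp\exp\exp(N^{618}))^{-1}$ and $N\ge A$. Following \cite{Mildcriticality}, split $|v|\le \Lambda$ versus $|v|>\Lambda$ on $B_{r_1}$. On $\{|v|>\Lambda\}$ we use $|v|^{3-\mu}\le |v|^3\Lambda^{-\mu}$, together with the Orlicz bound: the weight there is at most $(\log\log\log((\log(e^{e^{3e^e}}+\Lambda))^{1/3}))^{\theta}$. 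On $\{|v|\le\Lambda\}$ we use the bounded domain to get a bound $C\Lambda^{3-\mu}$. Outside $B_{r_1}$, $V=0$, and $w$ is bounded with compact support. Choosing $\Lambda$ so that $\Lambda^{\mu}$ beats the iterated-log weight makes the two contributions comparable, giving $N$ of size roughly $(\log\log\log\log\Lambda)^{\theta/3}$ times constants. Taking $\log\Lambda\sim 1/\mu$, the requirement $\mu\le(\exp\exp\exp(N^{618}))^{-1}$ reads $\log\log\log\log\Lambda\gtrsim N^{618}$, i.e.\ $N^{618}\lesssim (N/N_0)^{3/\theta}$ up to constants. This holds once $\theta$ is a small universal constant (e.g.\ $\theta<3/618$) and $\Lambda$, hence $N$, is chosen large enough depending on $N_0$ and $A$.

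\textbf{Step 3: conclusion, and the main obstacle.} With these choices, Proposition \ref{General forced subcritical information transfer} gives $V\in L^\infty_tL^4_x$, and Step 1 closes the contradiction. I expect the main obstacle to be the bookkeeping in Step 2. The chain $\Lambda\to\mu\to N$ must be consistent with the iterated exponentials in Remark \ref{Small mu remark}; this consistency is exactly what fixes the exponent $\theta$ and the triple-log inside the Orlicz weight. One must also check that the extra $(\cdot)^{1/3}$ and the offset $e^{e^{3e^e}}$ keep all logarithms at least $1$, so that the weights are monotone and bounded below.

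A secondary issue is that the constant $A$ coming from the truncation is finite but uncontrolled. This is harmless here, since the argument is qualitative and $N\ge A$ can always be arranged by enlarging $\Lambda$.
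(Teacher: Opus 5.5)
Your architecture matches the paper's: rescale, truncate with Lemma \ref{Truncation procedure}, then invoke Proposition \ref{General forced subcritical information transfer} through Remark \ref{Small mu remark}, with a small $\theta$ (the paper takes $\theta=1/618$). However, Step 2 is wrong as written, and it is exactly where the Orlicz hypothesis enters: the two height estimates are attached to the wrong sets.

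Write $W(s):=\left(\log\log\log\left((\log(e^{e^{3e^e}}+s))^{1/3}\right)\right)^{\theta}$, which is increasing and $\geq 1$.
\begin{itemize}
\item On $\{|v|>\Lambda\}$ the weight is at least $W(\Lambda)$, not at most. So the Orlicz bound gives no uniform control of $\int_{\{|v|>\Lambda\}}|v|^3$. The step $|v|^{3-\mu}\le\Lambda^{-\mu}|v|^3$ discards precisely the decay that has to absorb the weight.
\item On $\{|v|\le\Lambda\}$, the bound $C\Lambda^{3-\mu}$ only certifies $\|V\|_{L^\infty_tL^{3-\mu}_x}\lesssim\Lambda$, i.e.\ $N\sim\Lambda$. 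With $\log\Lambda\sim 1/\mu$, Remark \ref{Small mu remark} then demands $\exp\exp\exp(N^{618})\le 1/\mu\sim\log\Lambda$. This is impossible for $N\sim\Lambda$ large.
\end{itemize}
The size $N\sim(\log\log\log\log\Lambda)^{\theta/3}$ that you quote afterwards does not follow from the estimates you wrote.

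The repair, which is what the paper does, is to swap the roles.
\begin{itemize}
\item On the low set, monotonicity of $W$ gives $\int_{B\cap\{|v|\le\Lambda\}}|v|^3\le W(\Lambda)\int_B |v|^3/W(|v|)\le N_0W(\Lambda)$. H\"older on the bounded ball then passes to $L^{3-\mu}$. This is the estimate that actually produces $N\sim (N_0W(\Lambda))^{1/3}$.
\item On the high set, write $|v|^{3-\mu}=\frac{|v|^3}{W(|v|)}\,|v|^{-\mu}W(|v|)$. Use that $s\mapsto s^{-\mu}W(s)$ is non-increasing on $[e^{1/\mu},\infty)$. Then for $\Lambda\ge e^{1/\mu}$ this part is at most $N_0\Lambda^{-\mu}W(\Lambda)$. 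This, not the low set, is where the constraint linking $\Lambda$ and $\mu$ comes from.
\end{itemize}
The paper uses the cruder bound $W(s)\le\log s$ and the monotonicity of $s^{-\mu}\log s$ on $[e^{1/\mu},\infty)$, obtaining $N_0\Lambda^{-\mu}\log\Lambda$. That is why it takes $\mu=(\log\Lambda)^{-1/2}$, so that this term tends to zero. With these two bounds in place, your final bookkeeping goes through.

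Alternatively, your crude low-set bound is harmless if the height is \emph{fixed}, say $\Lambda=1$. The rest is then bounded by $N_0\sup_{s\ge1}s^{-\mu}W(s)\lesssim N_0(\log\log\log(1/\mu))^{\theta}$ for small $\mu$, after which you simply take $\mu$ small.

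A minor point: Lemma \ref{Serrin-type bootstrap} assumes $v\in L^\infty$, so it does not turn $V\in L^\infty_tL^4_x$ into boundedness. Use the classical Prodi--Serrin criterion, as the paper does. Alternatively, interpolate with the energy bound to get $L^\infty_tL^3_x$ and apply Proposition \ref{Rescaledquantestimtime}.
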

We follow the proof of \cite[Theorem 2]{Mildcriticality} - we will apply the truncation procedure Lemma \ref{Truncation procedure}, and apply the slightly supercritical regularity criteria for the \textit{forced} Navier-Stokes equations instead. 
\begin{proof}
    Let $\theta \in (0,1)$ be a fixed universal constant that will be determined later, and let $\delta>0$. By translating, we may assume $x_*=0$. For contradiction, assume that there exists a time $0<t_*<T_*$ so that 
    \begin{align}
        \sup_{t_*<t<T_*}\int_{B_\delta}\dfrac{|v(x,t)|^3}{\left(\log\log\log\left(\left(\log\left(e^{e^{3e^e}}+|v(x,t)|\right)\right)^\frac{1}{3}\right)\right)^\theta}\;dx &\leq L < \infty.
        \end{align}
    Without loss of generality, we may assume that $L\geq 1$ is as large as required later, and we may also assume that $0<\delta<\min(1,\sqrt{T_*-t_*})$.
    We will show that $v$ is bounded all the way up to time $t=T_*$, hence blow-up cannot occur then.
    \\\underline{\textbf{Step 1: Estimating the $L^{3-\mu}(B_\delta)$ norm of $v$.}} This step is similar to \cite{Mildcriticality}.
    \\ Let $\lambda\in (1,\infty)$ and $\mu \in (0,\tfrac{1}{2}]$ be free constants to be determined later. Just as in \cite{Mildcriticality}, decompose $v$ by heights:
    \begin{align}
        v(x,t) &:= v_{>\lambda}(x,t) + v_{\leq \lambda}(x,t)
        := v(x,t)\chi_{\left\lbrace x\in B_\delta : |v(x,t)|>\lambda\right\rbrace} +v(x,t)\chi_{\left\lbrace x\in B_\delta : |v(x,t)|\leq\lambda\right\rbrace}. 
    \end{align}
    We split the Orlicz-type integral into two corresponding parts:
    \begin{align}
        \int_{B_\delta}\dfrac{|v(x,t)|^3}{\left(\log\log\log\left(\left(\log\left(e^{e^{3e^e}}+|v(x,t)|\right)\right)^\frac{1}{3}\right)\right)^\theta}\;dx =: I_{>\lambda} + I_{\leq \lambda},
    \end{align}
    where 
    \begin{align}
        I_{>\lambda} := \int_{\left\lbrace x\in B_\delta : |v(x,t)|>\lambda\right\rbrace}\dfrac{|v(x,t)|^3}{\left(\log\log\log\left(\left(\log\left(e^{e^{3e^e}}+|v(x,t)|\right)\right)^\frac{1}{3}\right)\right)^\theta}\;dx.
    \end{align}
    First, we may estimate $I_{>\lambda}$ exactly as in \cite[(34)-(37)]{Mildcriticality} to obtain, for all $t\in(t_*,T_*),$
    \begin{align}
        \int_{\left\lbrace x\in B_\delta : |v(x,t)|>\lambda\right\rbrace} |v(x,t)|^{3-\mu}\;dx \leq \frac{L\log(\lambda)}{\lambda^\mu}, \label{upperheight estim}
    \end{align}
    provided that 
    \begin{align}
        \lambda\geq \max(C(\theta),2,e^\frac{1}{\mu}).\label{lambda large}
    \end{align}
    Next, since $\delta \in (0,1)$, $\mu \in (0,\tfrac{1}{2}]$ and $L\geq 1$, we can apply H\"older's inequality and $I_{\geq\lambda}\leq L$ to obtain that for all $t\in(t_*,T_*)$:
    \begin{align}
        \norm{v_{\leq \lambda}(\cdot,t)}{L^{3-\mu}(B_\delta)}\leq \frac{4\pi}{3}\delta^\frac{\mu}{3-\mu}\norm{v_{\leq \lambda}(\cdot,t)}{L^3(B_\delta)} &\leq  \frac{4\pi}{3}\delta^\frac{\mu}{3-\mu}L^\frac{1}{3} \left(\log\log\log\left(\left(\log\left(e^{e^{3e^e}}+\lambda\right)\right)^\frac{1}{3}\right)\right)^\frac{\theta}{3} 
        \\ &\leq \frac{4\pi}{3}L\left(\log\log\log\left(\left(\log\left(e^{e^{3e^e}}+\lambda\right)\right)^\frac{1}{3}\right)\right)^\frac{\theta}{3} .\label{lowerhieght estim}
    \end{align}
    For $\lambda\geq \max(C(\theta),2,e^\frac{1}{\mu})$, $\mu \in (0,\tfrac{1}{2}]$, we may combine (\ref{upperheight estim}) and (\ref{lowerhieght estim}) to get:
    \begin{align}
        \norm{v}{L^\infty_t L^{3-\mu}_x((t_*,T_*)\times B_\delta)}&\leq \max\left(\left(\frac{\log(\lambda)L}{\lambda^\mu}\right)^\frac{1}{2},\left(\frac{\log(\lambda)L}{\lambda^\mu}\right)^\frac{1}{3}\right) + \frac{4\pi}{3}L\left(\log\log\log\left(\left(\log\left(e^{e^{3e^e}}+\lambda\right)\right)^\frac{1}{3}\right)\right)^\frac{\theta}{3} . \nonumber
    \end{align}
    \underline{\textbf{Step 2: set up to apply the mild criticality breaking Lemma.}} This step is similar to \cite{ Mildcriticality}.
    \\ Define $\theta:= \frac{1}{618}$ (see in Remark \ref{Small mu remark}) and define $\mu:= \frac{1}{(\log(\lambda))^\frac{1}{2}}$ for $\lambda$ sufficiency large. In particular, $\lambda\geq \max(e, C(\theta))$ (where $C(\theta)$ comes from \eqref{lambda large}) implies that   $\lambda \geq e^\frac{1}{\mu}$ and hence $\lambda$ satisfies \eqref{lambda large}.
    Using the observation that $\lim_{\lambda\to\infty}\frac{\log(\lambda)}{\lambda^\mu}=0,$ we see that there exists a sufficiently large $\lambda_0(L,\delta)\geq 0$ such that for all $\lambda>\lambda_0(L,\delta),$
    \begin{align}
        \norm{v}{L^\infty_t L^{3-\mu}_x((t_*,T_*)\times B_\delta)}&\leq \max\left(\left(\frac{\log(\lambda)L}{\lambda^\mu}\right)^\frac{1}{2},\left(\frac{\log(\lambda)L}{\lambda^\mu}\right)^\frac{1}{3}\right) \nonumber+ \frac{4\pi}{3}L\left(\log\log\log\left(\left(\log\left(e^{e^{3e^e}}+\lambda\right)\right)^\frac{1}{3}\right)\right)^\frac{1}{3\cdot 618}
        \\ &\leq \frac{1}{2}\delta^\frac{1}{5}\left(\log\log\log\left(\left(\log\left(e^{e^{3e^e}}+\lambda\right)\right)^\frac{1}{3}\right)\right)^\frac{1}{618} =:N_\lambda.
    \end{align}
    Now, if we ensure that $\lambda_0$ is sufficiently large, we have:
    $$\left(\log\left(e^{e^{3e^e}}+\lambda \right)\right)^\frac{1}{3}\leq (\log(\lambda))^\frac{1}{2}= \frac{1}{\mu}.$$
    This gives us that 
    \begin{align}
        0<\mu \leq \frac{1}{\left(\log\left(e^{e^{3e^e}}+\lambda \right)\right)^\frac{1}{3}} =  \frac{1}{\exp\exp\exp\left(\left(2\delta^{-\frac{1}{5}}N_\lambda\right)^{618}\right)}.\label{small mu achieved}
    \end{align}
    In the following, we notice that as $\lambda \to \infty,$ we have $N_\lambda\to \infty$ and $\mu =\mu(N_\lambda;\lambda)\to 0.$
    \\\\ \underline{\textbf{Step 3: apply the mild criticality breaking.}} Here, we implement the truncation procedure Lemma \ref{Truncation procedure}, then proceed as in \cite{ Mildcriticality} to apply the mild criticality breaking Proposition \ref{General forced subcritical information transfer}.
   \\ Recall that $0<\delta< \min(\sqrt{T_*-t_*},1)$. Set $\hat{v}(x,t):= \delta v(\delta x,\delta^2 t+T_*)$ for $(x,t)\in \R^3\times (-1,0)$, which is a smooth suitable Leray-Hopf weak solution to the Navier-Stokes equations on $\R^3\times (-1,0)$ and satisfies:
    \begin{gather}
        \norm{\hat{v}}{L^\infty_t L^{3-\mu}_x(Q_1)}\leq \delta^{-\frac{\mu}{3-\mu}}\norm{v}{L^\infty_t L^{3-\mu}_x(Q_\delta(0,T_*))}\leq \delta^{-\frac{1}{5}}N_\lambda =: \frac{\hat{N}_\lambda}{2}, \label{reslcaedsupercritassump1}
    \end{gather}
    Now we may apply the truncation procedure Lemma \ref{Truncation procedure} to the solution $\hat{v}$ to form a smooth suitable weak solution $(V,\Pi)$ on $\R^3\times(-1,0)$ to the Navier-Stokes equations with truncation forcing $F$, satisfying properties (1)-(3) of Lemma \ref{Truncation procedure}. 
    By part (2) and (3) respectively, $w\in L^\infty_t L^{3-\mu}_x(\R^3\times(t_0,0))$ and $F\in L^2_tH^1_x\cap L^6_{t,x}(\R^3\times (t_0,0))$. Therefore, ensuring that  $\lambda>\lambda_0(L,\delta, w,V,F)$ is sufficiently large, we have 
    \begin{align}
         \norm{w}{L^\infty_t L^{3-\mu}_x(\R^3\times(-\frac{1}{2},0))} \leq \frac{\hat{N}_\lambda}{2},  \quad \text{and}\quad  \norm{F}{L^2_tH^1_x\cap L^6_{t,x}(\R^3\times (-\frac{1}{2},0))}\leq \frac{\hat{N}_\lambda}{2}. \label{initialV1}
    \end{align}
    Using part 1 of Lemma \ref{Truncation procedure} and  (\ref{reslcaedsupercritassump1})-\eqref{initialV1}, we also have:
    \begin{align}
        \norm{V}{L^\infty_t L^{3-\mu}_x(\R^3\times(-\frac{1}{2},0))} \leq \norm{\Phi \hat{v}}{L^\infty_t L^{3-\mu}_x(Q_1)} + \norm{w}{L^\infty_t L^{3-\mu}_x(\R^3\times(-\frac{1}{2},0))} \leq \frac{\hat{N}_\lambda}{2}+\frac{\hat{N}_\lambda}{2}=\hat{N}_\lambda.
    \end{align} 
    Additionally, since $V$ is smooth on $\R^3\times (-1,0)$, we also have that for $\lambda>\lambda_0(L,\delta, w,V,F)$ sufficiently large:
    \begin{align}
         \norm{V(\cdot,-\tfrac{1}{2})}{L^4(\R^3)}\leq  \frac{\hat{N}_\lambda}{2}.
    \end{align}
    Since (\ref{small mu achieved}) holds true, we may finally apply\footnote{Translating in time, we apply the Proposition to $\tilde{V}(x,t):= V(x,t-\tfrac{1}{2})$ with where we take ``$T=\tfrac{1}{2}$ and $A=N=\hat{N}_\lambda$", to find that $\tilde{V}\in L^\infty_tL^4_x(\R^3\times(0,\tfrac{1}{2}))$.} Proposition \ref{General forced subcritical information transfer} in light of Remark \ref{Small mu remark} from which it follows that $V\in L^\infty_tL^4_x(\R^3\times(-\tfrac{1}{2},0))$.
    Using part (1) of Lemma \ref{Truncation procedure}, we then see that $\hat{v}\in L^\infty_tL^4_x(B_{r_0}\times (-\tfrac{1}{2},0)).$
    Returning back to $v$, we see that $v\in L^\infty_tL^4_x(B_{\delta r_0}\times (T_*-\tfrac{\delta^2}{2},T_*))$. Therefore $v$ satisfies the Prodi-Serrin condition so must be bounded in a backward parabolic neighbourhood of $(x_*,T_*)$ (e.g. see \cite{Serrin62}), which is a contradiction.
\end{proof} 

\section{Application of the quantitative estimate II: {$L^3$} blow-up rate for the Boussinesq equation}\label{Bouss Aplic}

\begin{Theorem}
    Suppose that $(v,\theta, p)$ is a smooth solution to the Boussinesq equations on $\R^3\times (0,T^*)$ corresponding to initial data $v_0\in L^2\cap L^\infty$ and $\theta_0\in L^2\cap L^\infty$ such that
    \begin{enumerate}
        \item $v$ is a Leray-Hopf weak solution to the Navier Stokes equations on $\R^3\times (0,T^*)$ with forcing $f:=\theta \textbf{e}_3$ in the sense of Definition \ref{LerayHopfWeak}.
        \item $\theta$ is smooth with $\theta \in L^2_t\dot{H}^1_x\cap C_tL^2_x\cap L^6_{t,x}(\R^3\times (0,T^*))$.
        \item $T^*>0$ is the first time such that $\lim\limits_{t\uparrow T^*}\norm{v(\cdot,t)}{L^\infty(\R^3)}=\infty.$
    \end{enumerate}
    Then we have the following quantitative blow-up rate for the $L^3$ norm of $v$:
    \begin{align}
        \limsup\limits_{t\uparrow T^*}\dfrac{\norm{v(\cdot,t)}{L^3(\R^3)}}{\left(\log\log\log\left(\dfrac{1}{(T^*-t)^\frac{1}{4}}\right)\right)^\frac{1}{614}} = \infty.
    \end{align}
\end{Theorem}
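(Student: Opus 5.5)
We argue by contradiction, in the manner of Tao's derivation of \eqref{Tao Quantitative blowup rate} from \eqref{Tao Quant reg estim}, with Proposition \ref{Rescaledquantestimtime} playing the role of the unforced quantitative estimate.

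Suppose the conclusion fails. Then there are a constant $K\geq 1$ and a time $t_*\in(0,T^*)$ such that
\begin{align}
\norm{v(\cdot,t)}{L^3(\R^3)}\leq K\left(\log\log\log\left((T^*-t)^{-\frac{1}{4}}\right)\right)^{\frac{1}{614}}\quad\text{for } t\in(t_*,T^*).
\end{align}

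The first task is to bound the forcing $f=\theta\mathbf{e}_3$ independently of $v$. The $\theta$-equation is a transport-diffusion equation with divergence-free drift. Testing it against $\theta$ gives $\norm{\theta}{L^\infty_tL^2_x}^2+2\norm{\nabla\theta}{L^2_{t,x}}^2\leq\norm{\theta_0}{L^2}^2$. The maximum principle gives $\norm{\theta}{L^\infty_{t,x}}\leq\norm{\theta_0}{L^\infty}$. Interpolating these, $\norm{\theta}{L^6(\R^3\times(t_0,t_1))}^6\leq\norm{\theta_0}{L^\infty}^4\norm{\theta}{L^2(\R^3\times(t_0,t_1))}^2\lesssim (t_1-t_0)$ on any interval. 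Hence there is a constant $C_0$, depending only on $\theta_0$ and $T^*$, such that $\norm{f}{L^2_tH^1_x\cap L^6_{t,x}(\R^3\times(t_0,t_1))}\leq C_0$ for every subinterval $(t_0,t_1)$ of $(0,T^*)$.

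Next, fix $t\in(t_*,T^*)$ close to $T^*$ and set $t_0:=t-(T^*-t)$, so that $t-t_0=T^*-t$. On $(t_0,t)$ the $L^3$ norm of $v$ is bounded by
\begin{align}
M(t):=\max\left(C_0,\;K\left(\log\log\log\left((T^*-t_0)^{-\frac{1}{4}}\right)\right)^{\frac{1}{614}}\right)+C_0 .
\end{align}
Here $T^*-t_0=2(T^*-t)$, and for $t$ close to $T^*$ the quantity $M(t)$ is large. We apply Proposition \ref{Rescaledquantestimtime} with $M=M(t)$ on $(t_0,t)$, taking the sup over $s\uparrow t$ or applying it on $(t_0,t')$ with $t'$ slightly beyond $t$ but before $T^*$. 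This is valid because $v$ is smooth and Leray--Hopf up to $T^*$. It yields
\begin{align}
\norm{v(\cdot,t)}{L^\infty}\lesssim (T^*-t)^{-\frac{1}{2}}\exp\exp\exp\left(M(t)^{613}\right).
\end{align}
Since $M(t)^{613}\leq 2K^{613}\left(\log\log\log\left((2(T^*-t))^{-\frac14}\right)\right)^{\frac{613}{614}}$, and $x^{613/614}K^{613}\leq \tfrac{1}{10}x$ once $x$ is large, we get $\exp\exp\exp(M(t)^{613})\leq (T^*-t)^{-\frac{1}{4}+\epsilon}$ for $t$ close to $T^*$. The exponent $613/614<1$ is exactly what absorbs the fixed constant $K$. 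This gives $\norm{v(\cdot,t)}{L^\infty}\lesssim (T^*-t)^{-\frac{3}{4}-\epsilon}$.

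This bound is only polynomial, so it does not by itself contradict blow-up. To close the argument we interpolate it with the $L^3$ bound:
\begin{align}
\norm{v}{L^5_x}\lesssim\norm{v}{L^3_x}^{3/5}\norm{v}{L^\infty_x}^{2/5}.
\end{align}
This gives $\norm{v(\cdot,t)}{L^5_x}^5\lesssim K^3(\log\log\log)^{3/614}(T^*-t)^{-\frac{3}{2}-\epsilon'}$. That is not integrable in $t$, so we use $L^\infty$ directly in a Prodi--Serrin norm such as $L^4_tL^\infty_x$ (or $L^2_tL^\infty_x$). This requires $\norm{v}{L^\infty_x}\lesssim(T^*-t)^{-\frac12+\epsilon''}$ with some room. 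Reaching that requires fixing the exponent bookkeeping, and that is the main obstacle.

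We therefore recalibrate. We replace $(T^*-t)^{1/4}$ by using the triple exponential more sharply: choose $\epsilon$ small so that $\exp\exp\exp(M^{613})\leq (T^*-t)^{-\epsilon}$. This is possible since the inner quantity $\log\log\log$ grows slower than any power, and $(T^*-t)^{-1/4}$ enters only through the fixed power $\tfrac14$, giving $\exp\exp\exp(M^{613})\leq ((T^*-t)^{-1/4})^{o(1)}$. We then get $\norm{v(\cdot,t)}{L^\infty}\lesssim (T^*-t)^{-\frac12-o(1)}$. This still falls just short of the $L^2_tL^\infty_x$ criterion, so instead we run Proposition \ref{Rescaledquantestimtime} on a fixed interval $(t_*,t)$ with $t-t_*\simeq 1$. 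There the $L^3$ bound is $M(t)$, growing like $(\log\log\log)^{1/614}$. The resulting $L^\infty$ bound $(t-t_*)^{-1/2}\exp\exp\exp(M(t)^{613})=(T^*-t)^{-o(1)}$ is a sub-polynomial rate. Combining it with the standard lower bound $\norm{v(\cdot,t)}{L^\infty}\gtrsim (T^*-t)^{-1/2}$ for a blowing-up forced solution with bounded forcing, obtained from local existence in $L^\infty$ with time of existence comparable to $\norm{v}{L^\infty}^{-2}$, gives the contradiction. Verifying that this local existence lower bound holds with the forcing $\theta\mathbf{e}_3\in L^\infty$ is the step I would check most carefully.
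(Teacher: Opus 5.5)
Your final argument is correct, and its core matches the paper's. You apply Proposition \ref{Rescaledquantestimtime} on an interval whose length stays bounded below as $t\uparrow T^*$; the paper uses $(0,t)$ after reducing to $T^*\le 1$. This makes $(t-t_0)^{-1/2}$ harmless, and the gap between the exponents $\tfrac{1}{614}$ and $\tfrac{1}{613}$ absorbs $K$.

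The difference is the ending. The paper chooses $M=(\log\log\log((T^*-t)^{-1/4}))^{1/613}$ so that $\exp\exp\exp(M^{613})=(T^*-t)^{-1/4}$ exactly. It gets $\|v(\cdot,t)\|_{L^\infty}\le t^{-1/2}(T^*-t)^{-1/4}$, which lies in $L^2_t$ near $T^*$, and then invokes a Prodi--Serrin criterion for Boussinesq \cite{Bous-Serrintype}.

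You instead prove the stronger rate $(T^*-t)^{-o(1)}$ and contradict the Leray-type lower bound $\|v(\cdot,t)\|_{L^\infty}\gtrsim (T^*-t)^{-1/2}$. This avoids the Boussinesq-specific citation. The cost is that you must prove $L^\infty$ local well-posedness with $\theta\mathbf{e}_3$ treated as a given forcing on $(0,T^*)$; no extension of $\theta$ past $T^*$ is needed. Three points in that step:
\begin{itemize}
\item $\mathbb{P}$ is not bounded on $L^\infty$, so bound the forcing term using $\theta\in L^\infty_tL^6_x$, e.g. $\|\int_s^t e^{(t-\tau)\Delta}\mathbb{P}(\theta\mathbf{e}_3)\,d\tau\|_{L^\infty}\lesssim (t-s)^{3/4}\|\theta\|_{L^\infty_tL^6_x}$.
\item Handle the nonlinear term with the Oseen kernel bound.
\item Use uniqueness of bounded mild solutions to identify the local solution with $v$.
\end{itemize}

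Your sub-polynomial bound already puts $v$ in $L^2_tL^\infty_x$ near $T^*$, so the paper's ending was open to you as well. Your remark about falling ``just short'' of $L^2_tL^\infty_x$ applies only to the short-interval attempt.

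In a write-up, drop the two dead ends, including the sign slip between $(T^*-t)^{-1/4+\epsilon}$ and $(T^*-t)^{-3/4-\epsilon}$. Also state the setup precisely. The Proposition needs $t_1<t_0+1$, so choose $t_*$ such that:
\begin{itemize}
\item $T^*-t_*<1$;
\item $\log\log\log((T^*-s)^{-1/4})$ is defined and increasing on $(t_*,T^*)$, so the supremum of your $L^3$ bound over $(t_*,t)$ is attained at $t$;
\item the resulting $M$ satisfies $M\ge M_0$.
\end{itemize}
All you actually use is $t-t_*\ge (T^*-t_*)/2$, not $t-t_*\simeq 1$.
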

Similar arguments may be seen in \cite[Theorem 1.4]{Tao21} and \cite[Theorem 2]{Localizedblowup}. Here, we use an a-priori $L^p$ estimate for the temperature, which gives us a bound on the forcing term. This then allows us to consider the Boussinesq equations as forced Navier-Stokes equations, and we may apply the forced quantitative regularity estimate Proposition \ref{Rescaledquantestimtime} in the same way as \cite[Theorem 2]{Localizedblowup}.
\begin{proof}
    Assume that the opposite holds; then there exists $N\in (1,\infty)$ such that 
    \begin{align} \label{Contropositive to quantitative Bous blow-up}
        \sup\limits_{0<s<t}\norm{v(\cdot,s)}{L^3(\R^3)} \leq N\left(\log\log\log\left(\dfrac{1}{(T^*-t)^\frac{1}{4}}\right)\right)^\frac{1}{614} \qquad \forall t\in (0,T^*).
    \end{align}
    Next, define $F:= \theta \mathbf{e}_3.$
    Testing the equation for $\theta$ with $\theta |\theta|^{p-2}$, integrating by parts and using that $v$ is divergence-free gives us the following $L^p$ energy estimate for $\theta$ for $p>1:$
    \begin{align}
        \norm{\theta}{L^\infty_t L^p_x(\R^3\times(0,T^*))}^p + p(p-1)\int_0^{T^*}\int_{\R^3}|\nabla\theta|^2|\theta|^{p-2}\;dxdt \leq \norm{\theta_0}{L^p(\R^3)}^p.
    \end{align}
    In particular, for $p=2$ and $p=6$, we obtain
    \begin{align} \label{estimate for Boussinesq forcing}
        \norm{F}{L^2_tH^1_x \cap L^6_{t,x}(\R^3\times(0,T^*))} \leq \max\left((T^*)^\frac{1}{2}\norm{\theta_0}{L^2(\R^3)}, \norm{\theta_0}{L^2(\R^3)}, (T^*)^\frac{1}{6}\norm{\theta_0}{L^6(\R^3)} \right) =: \mathcal{F}.
    \end{align}
    Now, fix $\varepsilon_0>0$ such that 
    \begin{align} \label{epsilon small compared to N}
        \dfrac{N}{\left(\log\log\log\left(\varepsilon_0^{-\frac{1}{4}}\right)\right)^{\frac{1}{613}-\frac{1}{614}}} \leq \frac{1}{2}
    \end{align}
    and 
    \begin{align} \label{M suff large by making epsilon suff small}
        \log\log\log\left(\varepsilon_0^{-\frac{1}{4}}\right)^\frac{1}{613} \geq 2\max\left(M_0,\mathcal{F}\right).
    \end{align}
    Here, $M_0$ is a sufficiently large universal constant.
    Then combining lines (\ref{Contropositive to quantitative Bous blow-up})  and (\ref{epsilon small compared to N}) gives us that
    \begin{align}
     \sup\limits_{0<s<t}\norm{v(\cdot,s)}{L^3(\R^3)} +\norm{F}{L^2_tH^1_x \cap L^6_{t,x}(\R^3\times(0,T^*))}  \leq \left(\log\log\log\left(\dfrac{1}{(T^*-t)^\frac{1}{4}}\right)\right)^\frac{1}{613} =: M  \qquad \forall t \in (T^*-\varepsilon_0,T^*).
    \end{align}
    In particular, in this construction, we have $M\geq M_0$.
    Without loss of generality, we may assume that $T^*\leq 1$ (else, replace $0$ by $T^*-1$).
    Then we may apply our quantitative estimate (Proposition \ref{Rescaledquantestimtime}) for each $t\in (T^*-\varepsilon_0,T^*)$. We have:
    \begin{align}
        \norm{v(\cdot,t)}{L^\infty(\R^3)}\leq t^{-\frac{1}{2}}\exp\exp\exp \left(M^{613}\right) = \dfrac{1}{t^\frac{1}{2}(T^*-t)^\frac{1}{4}}.
    \end{align}
    \\In particular, we see that $v\in L^2_tL^\infty_x(\R^3\times (T^*-\varepsilon_0,T^*))$
    and hence satisfies the Prodi-Serrin condition (e.g. see \cite[Theorem 1.1]{Bous-Serrintype}), so $v\in L^\infty(\R^3\times (T^*-\varepsilon_0,T^*))$, contradicting that $\lim\limits_{t\to T^*}\norm{v(\cdot,t)}{L^\infty(\R^3)}=\infty$.
\end{proof}
\begin{Remark}
    Suppose that $(v, p)$ is a smooth Leray-Hopf weak solution to the Navier-Stokes equations with smooth forcing $f$ on $\R^3\times (0,T^*)$ that first blows up at time $T^*>0$, in the sense that there exists a point $x^*$ such that $v\notin L^\infty(Q_r(x^*,T^*))$ for all $r>0$. Following the exact same arguments from \eqref{estimate for Boussinesq forcing} onwards, if we additionally have $f\in L^2_tH^1_x \cap L^6_{t,x}(\R^3\times(0,T^*)) $, then we have the following quantitative blow-up rate for the $L^3$ norm of $v$:
    \begin{align}
        \limsup\limits_{t\uparrow T^*}\dfrac{\norm{v(\cdot,t)}{L^3(\R^3)}}{\left(\log\log\log\left(\dfrac{1}{(T^*-t)^\frac{1}{4}}\right)\right)^\frac{1}{614}} = \infty.
    \end{align}
\end{Remark}
\subsubsection*{Acknowledgements} TB is supported by an EPSRC New Investigator Award UKRI096 `Dynamics and regularity criteria for nonlinear incompressible partial differential equations'. HP is supported by Raoul \& Catherine Hughes (Alumni funds) and the University Research Studentship award EH-MA1333.
\subsubsection*{Data Availability} Data sharing is not applicable to this article as no datasets were generated or analysed during the current study.
\subsubsection*{Declarations} 
\subsubsection*{Conflict of interest} The authors state that there is no conflict of interest.
\bibliographystyle{alpha}
\bibliography{Bibliography}

\end{document}